\tikzset{directed/.style={decoration={markings,
mark=at position .6 with {\arrow[scale=1.6]{stealth}}},
postaction={decorate}}}
\def\@seccntDot{.}
\def\@seccntformat#1{\csname the#1\endcsname\@seccntDot\hskip 0.5em}
\renewcommand\section{\@startsection{section}{1}{\z@}%
{18\p@ \@plus 6\p@ \@minus 3\p@}%
{9\p@ \@plus 6\p@ \@minus 3\p@}%
{\large\bfseries\boldmath}}
\renewcommand\subsection{\@startsection{subsection}{2}{\z@}%
{12\p@ \@plus 6\p@ \@minus 3\p@}%
{3\p@ \@plus 6\p@ \@minus 3\p@}%
{\bfseries\boldmath}}
\renewcommand\subsubsection{\@startsection{subsubsection}{3}{\z@}%
{12\p@ \@plus 6\p@ \@minus 3\p@}%
{\p@}%
{\bfseries\boldmath}}
\newcommand{\A}{\mathcal{A}}
\theoremstyle{plain}
\newtheorem{theorem}{Theorem}[section]
\newtheorem{lemma}{Lemma}[section]
\newtheorem{corollary}{Corollary}[section]
\newtheorem{proposition}{Proposition}[section]
\newtheorem{definition}{Definition}[section]
\newtheorem{remark}{Remark}[section]
\newtheorem{example}{Example}[section]
\theoremstyle{nonumberplain}
\newtheorem{proof}{\it Proof.}
\numberwithin{equation}{section}
\begin{document}

\title{The $(p,q)$-spectral radii of $(r,s)$-directed hypergraphs}
\author{Lele Liu 
\thanks{Department of Mathematics, Shanghai University, Shanghai 200444, P.R. China
({\tt ahhylau@gmail.com}). The work is done when this author visited the University 
of South Carolina during September 2017--March 2019 under the support of the fund 
from the China Scholarship Council (CSC No. 201706890045).}
\and Linyuan Lu
\thanks{Department of Mathematics, University of South Carolina, Columbia, SC 29208, 
  USA ({\tt lu@math.sc.edu}). This author was supported in part by NSF grant DMS-1600811
  and ONR grant N00014-17-1-2842.}}

\maketitle

\begin{abstract}
An $(r,s)$-directed hypergraph is a directed hypergraph with $r$ vertices 
in tail and $s$ vertices in head of each arc. Let $G$ be an $(r,s)$-directed 
hypergraph. For any real numbers $p$, $q\geq 1$, we define the $(p,q)$-spectral 
radius $\lambda_{p,q}(G)$ as
\[
\lambda_{p,q}(G):=\max_{||\bm{x}||_p=||\bm{y}||_q=1}
\sum_{e\in E(G)}\Bigg(\prod_{u\in T(e)}x_u\Bigg)\Bigg(\prod_{v\in H(e)}y_v\Bigg),  
\]
where $\bm{x}=(x_1,\ldots,x_m)^{\mathrm{T}}$, $\bm{y}=(y_1,\ldots,y_n)^{\mathrm{T}}$
are real vectors; and $T(e)$, $H(e)$ are the tail and head of arc $e$, respectively.
We study some properties about $\lambda_{p,q}(G)$ including the bounds 
and the spectral relation between $G$ and its components. 

The $\alpha$-normal labeling method for uniform hypergraphs was introduced by 
Lu and Man in 2014. It is an effective method in studying the spectral radii 
of uniform hypergraphs. In this paper, we develop the $\alpha$-normal labeling 
method for calculating the $(p,q)$-spectral radii of $(r,s)$-directed hypergraphs.
Finally, some applications of $\alpha$-normal labeling method are given.
\vspace{2mm}

\noindent{\bfseries Keywords:} Directed hypergraph; $(p,q)$-spectral radius;
rectangular tensor; $\alpha$-normal labeling; weighted incidence matrix \vspace{2mm}

\noindent{\bfseries AMS classification:} 05C50; 05C65; 15A18
\end{abstract}
\vspace{5mm}

\section{Introduction}
An $(r,s)$-directed hypergraph is a directed hypergraph with $r$ vertices in 
tail and $s$ vertices in head of each hyperarc. The purpose of this paper is 
to study the spectral properties of $(r,s)$-directed hypergraphs and develop 
a simple method to compute the spectral radii of directed hypergraphs.

Recall that an undirected hypergraph $H=(V,E)$ is a pair consisting of a vertex set 
$V$, and an edge set $E$ of subsets of $V$. A uniform hypergraph is a hypergraph in 
which each edge has the same size. In 2012, Cooper and Dutle \cite{Cooper2012} defined 
the spectra of uniform hypergraphs via eigenvalues of tensors introduced independently by 
Qi \cite{Qi2005} and Lim \cite{Lim2005}. Since then the spectral undirected hypergraph 
theory has been widely studied in \cite{Banerjee2017:General Spectral Radius,Khan2016,
LiShao2016,Nikiforov2014:Analytic Methods,Nikiforov2017:Symmetric Spectrum,Pearson2014,Shao2015}. 
For directed hypergraphs, in contrast, there are very few researches in spectral directed 
hypergraph theory so far. In 2016, Xie and Qi \cite{Qi2016:Spectral Directed Hypergraph} 
investigated the spectral properties of a specific kind of directed hypergraphs via 
tensors. Recently, Banerjee et al. \cite{Banerjee2017:Directed Spectral Radius} represent 
a general directed hypergraph by different connectivity tensors and study their spectral 
properties.

In this paper, we introduce a parameter $\lambda_{p,q}(G)$ for an $(r,s)$-directed 
hypergraph $G$ and real numbers $p$, $q\geq 1$ via multilinear function (see more 
details in Section \ref{sec2}), which called the $(p,q)$-spectral radius of $G$. 
Also, we give some properties about $\lambda_{p,q}(G)$.

In \cite{LuMan2016:Small Spectral Radius}, Lu and Man discovered a novel method for 
computing the spectral radii of uniform hypergraphs by introducing an $\alpha$-normal 
labeling method, which labels each corner of an edge by a positive number so that
the sum of the corner labels at any vertex is $1$ while the product of all corner 
labels at any edge is $\alpha$. This method has been proved by many researches 
\cite{KangLiu2016,LiuKang2016,BaiLu2017,OuyangQi2017,YuanSi2017,ZhangKang2017,
XiaoWang2018} to be a simple and effective method in the study of spectral 
radii of uniform hypergraphs. Recently, Liu and Lu \cite{LiuLu2018:p_spec} 
extend the $\alpha$-normal labeling method to the $p$-spectral radii of uniform 
hypergraphs. Motivated by the preceding work \cite{LuMan2016:Small Spectral Radius}
and \cite{LiuLu2018:p_spec}, in the present paper we develop the $\alpha$-normal 
labeling method for calculating the $(p,q)$-spectral radii of $(r,s)$-directed 
hypergraphs.

The remaining part of this paper is organized as follows. In Section \ref{sec2}, 
some preliminary definitions concerning directed hypergraphs and tensors are 
given. Moreover, we present the definition of $\lambda_{p,q}(G)$ for an 
$(r,s)$-directed hypergraph $G$. Section \ref{sec3} is dedicated to some 
basic properties of $\lambda_{p,q}(G)$. In Section \ref{sec4}, we develop 
the $\alpha$-normal labeling method for calculating the $\lambda_{p,q}(G)$ 
by constructing consistently $\alpha$-normal weighted incidence matrix for 
the target $(r,s)$-directed hypergraph. Also, we present a method for 
comparing the $\lambda_{p,q}(G)$ in terms of a particular value by 
constructing $\alpha$-subnormal or consistently $\alpha$-supernormal 
weighted incidence matrix. In Section \ref{sec5}, some applications
are given.

\section{Preliminaries}
\label{sec2}
In this section, we will review some basic notions of tensors and directed hypergraphs,
and present the definitions of $(r,s)$-directed hypergraph and its $(p,q)$-spectral radius.
For the basics on undirected hypergraphs we follow the traditions, as in \cite{Bretto2013}.

Let $\mathbb{R}$ be the field of real numbers and $\mathbb{R}^n$ the $n$-dimensional 
real space. Further, denote the nonnegative octant of $\mathbb{R}^n$ by $\mathbb{R}^n_+$. 
Given a vector $\bm{x}=(x_1,x_2,\ldots,x_n)^{\mathrm{T}}$ and a set 
$S\subseteq [n]:=\{1,2,\ldots,n\}$, write $\bm{x}|_S$ for the restriction of $\bm{x}$ 
over the set $S$. Also, we write $|\bm{x}|:=(|x_1|,|x_2|,\ldots,|x_n|)^{\mathrm{T}}$, and
$||\bm{x}||_p:=(|x_1|^p+|x_2|^p+\cdots+|x_n|^p)^{1/p}$. For any real number $p\geq 1$, 
denote $\mathbb{S}_p^{n-1}$ ($\mathbb{S}_{p,+}^{n-1}$, $\mathbb{S}_{p,++}^{n-1}$) the 
set of all (nonnegative, positive) real vectors $\bm{x}\in\mathbb{R}^n$ with $||\bm{x}||_p=1$. 

For positive integers $r$, $s$, $m$ and $n$, a real $(r,s)$-th order $(m\times n)$-dimensional 
rectangular tensor, or simply a real rectangular tensor, refers to a multidimensional array 
(also called hypermatrix) with entries $a_{i_1\cdots i_rj_1\cdots j_s}\in\mathbb{R}$ for all 
$i_1$, $i_2$,$\ldots$, $i_r\in[m]$ and $j_1$, $j_2$,\,$\ldots$, $j_s\in[n]$. Recently, the 
(weak) Perron--Frobenius theorem for rectangular tensors were studied in 
\cite{ChangQi2010,Qi2013:Singular value,YangYang2011,Gautier2016:Tensor Norm}. We say that 
$\mathcal{A}=(a_{i_1\cdots i_rj_1\cdots j_s})$ is {\em partially symmetric}, if 
$a_{i_1\cdots i_rj_1\cdots j_s}$ is invariant under any permutation of indices among 
$i_1$, $i_2$,\,$\ldots$, $i_r$ and any permutation of indices among $j_1$, $j_2$,$\ldots$, 
$j_s$, i.e.,
\[
a_{\pi(i_1\cdots i_r)\sigma(j_1\cdots j_s)}=a_{i_1\cdots i_rj_1\cdots j_s},~
\pi\in \mathfrak{S}_{r},~\sigma\in \mathfrak{S}_{s},    
\]
where $\mathfrak{S}_k$ is the permutation group of $k$ indices.

Let $\mathcal{A}=(a_{i_1\cdots i_rj_1\cdots j_s})$ be an $(r,s)$-th order 
$(m\times n)$-dimensional rectangular tensor. Denote
\begin{equation}\label{eq:Ax^ry^s}
\mathcal{A}\bm{x}^r\bm{y}^s:=\sum_{i_1,\ldots,i_r=1}^m\sum_{j_1,\ldots,j_s=1}^n
a_{i_1\cdots i_rj_1\cdots j_s}x_{i_1}\cdots x_{i_r}y_{j_1}\cdots y_{j_s}.
\end{equation}
A nonnegative $(r,s)$-th order $(m\times n)$-dimensional rectangular tensor
$\mathcal{A}=(a_{i_1\cdots i_rj_1\cdots j_s})$ is associated with an 
undirected bipartite graph $G(\mathcal{A})=(V,E(\mathcal{A}))$, the 
bipartition of which is $V=[m]\cup [n]$, and $(i_p,j_q)\in E(\mathcal{A})$
if and only if $a_{i_1\cdots i_rj_1\cdots j_s}>0$ for some $(r+s-2)$ indices 
$\{i_1,\ldots,i_r,j_1,\ldots,j_s\}\backslash\{i_p,j_q\}$. Following 
\cite{Friedland2013:Perron–Frobenius}, the tensor $\mathcal{A}$ is called 
{\em weakly irreducible} if the graph $G(\mathcal{A})$ is connected.

A directed hypergraph is a pair $G=(V(G),E(G))$, where $V(G)$ is a set of vertices, 
and $E(G)$ is a set of hyperarcs. A hyperarc or simply arc is an ordered pair, $e=(X,Y)$, 
of disjoint subsets of vertices, $X$ is the {\em tail} of $e$ while $Y$ is its {\em head}. 
We denote the number of arcs of $G$ by $|G|$. In the following, the tail and the head 
of an arc $e$ will be denoted by $T(e)$ and $H(e)$, respectively. Denote 
\[
T(G)=\bigcup_{e\in E(G)}T(e),~~H(G)=\bigcup_{e\in E(G)}H(e).
\]
For convenience, we always assume $|T(G)|=m$ and $|H(G)|=n$ throughout this paper.

The {\em in-degree} $d_v^-$ of a vertex $v$ in directed hypergraph $G$ is the number 
of arcs contained $v$ in head, and the {\em out-degree} $d_v^+$ of $v$ is the number 
of arcs contained $v$ in tail. The {\em degree} $d_v$ of a vertex $v$ is $d_v^++d_v^-$. 
The maximum in-degree and out-degree of $G$ are denoted by $\Delta^-$ and $\Delta^+$, 
respectively; likewise, the minimum in-degree and out-degree of $G$ are denoted by 
$\delta^-$ and $\delta^+$, respectively. Given two directed hypergraphs $G_1=(V_1,E_1)$ 
and $G_2=(V_2,E_2)$, if $V_1\subseteq V_2$ and $E_1\subseteq E_2$, then $G_1$ is called 
the {\em directed subhypergraph} of $G_2$, denoted by $G_1\subseteq G_2$. With any 
directed hypergraph $G$, we can associate an undirected hypergraph on the same vertex 
set simply by replacing each arc by an edge with the same vertices. This hypergraph 
is called the {\em underlying hypergraph} of $G$.

Now we introduce some new concepts for directed hypergraphs. In a directed hypergraph $G$, 
an {\em anadiplosis walk} of length $\ell$ is an alternating sequence of vertices and arcs 
$v_0e_1v_1e_2\cdots v_{\ell-1}e_{\ell}v_{\ell}$ such that either $v_i\in T(e_i)\cap T(e_{i+1})$ 
or $v_i\in H(e_i)\cap H(e_{i+1})$, $i\in[\ell-1]$. Furthermore, if $e_1$, $e_2$,\,$\ldots$, 
$e_{\ell}$ ($\ell\geq 2$) are all distinct arcs of $G$, and either 
$v_0=v_{\ell}\in T(e_1)\cap T(e_{\ell})$ or $v_0=v_{\ell}\in H(e_1)\cap H(e_{\ell})$, 
then this anadiplosis walk is called an {\em anadiplosis cycle}. An anadiplosis walk:
$v_0e_1v_1e_2\cdots v_{\ell-1}e_{\ell}v_{\ell}$ is called an {\em anadiplosis semi-cycle} 
if $e_1$, $e_2$,\,$\ldots$, $e_{\ell}$ ($\ell\geq 2$) are all distinct arcs of $G$ and
either $v_0=v_{\ell}\in T(e_1)\cap H(e_{\ell})$ or $v_0=v_{\ell}\in H(e_1)\cap T(e_{\ell})$.
A directed hypergraph $G$ is {\em anadiplosis connected} if there exists a $u$\,--\,$v$ 
anadiplosis walk for all $u\neq v$ in $V(G)$, and a $u$\,--\,$u$ anadiplosis semi-cycle 
for any $u\in T(G)\cap H(G)$. A maximal anadiplosis connected subhypergraph of $G$ is 
called an {\em anadiplosis component} of $G$.

\begin{remark}
In our definition above, vertex repetition is allowed in anadiplosis cycle and anadiplosis 
semi-cycle. In the following directed graph, $v_0e_1v_1e_2v_2e_3v_3e_4v_4e_5v_2e_6v_0$ 
is an anadiplosis cycle, and $v_2e_6v_0e_1v_1e_2v_2$ is an anadiplosis semi-cycle.
\begin{center}
    \begin{tikzpicture}
        \foreach \i in {1,3,5,7}
        {
        \coordinate (v\i) at (45*\i:2);
        \filldraw[fill=black] (v\i) circle (0.08);
        }
        \filldraw[fill=black] (0,0) circle (0.08);
        \draw[directed] (v1) node[right=1mm] {$v_1$}--(0,0) node[right=1mm] {$v_2$};
        \draw[directed] (0,0)--(v3);
        \draw[directed] (v1)--(v3) node[left=1mm] {$v_0$};
        \draw[directed] (v5) node[left=1mm] {$v_3$}--(0,0);
        \draw[directed] (0,0)--(v7) node[right=1mm] {$v_4$};
        \draw[directed] (v5)--(v7);
        \node[above=1mm] at ($(v1)!0.5!(v3)$) {$e_1$};
        \node[right=1mm] at ($(v1)!0.5!(0,0)$) {$e_2$};
        \node[left=1mm] at ($(0,0)!0.5!(v5)$) {$e_3$};
        \node[below=1mm] at ($(v5)!0.5!(v7)$) {$e_4$};
        \node[right=1mm] at ($(0,0)!0.5!(v7)$) {$e_5$};
        \node[left=1mm] at ($(0,0)!0.5!(v3)$) {$e_6$};
    \end{tikzpicture}
\end{center}
\end{remark}

\begin{definition}
A directed hypergraph $G$ is called an {\em $(r,s)$-directed hypergraph} if for any arc 
$e\in E(G)$, $|T(e)|=r$ and $|H(e)|=s$.
\end{definition}

\begin{definition}\label{def:adjacency tensor}
Let $G$ be an $(r,s)$-directed hypergraph. The {\em adjacency tensor} of $G$ is defined 
as an $(r,s)$-th order $(m\times n)$-dimensional rectangular tensor $\mathcal{A}(G)$, 
whose $(i_1,\ldots,i_r,j_1,\ldots,j_s)$-entry is $\frac{1}{r!s!}$
if $T(e)=\{i_1,i_2,\ldots,i_r\}$, $H(e)=\{j_1,j_2,\ldots,j_s\}$ for some $e\in E(G)$ and 
$0$ otherwise.
\end{definition}

By the definition above, the adjacency tensor of an $(r,s)$-directed hypergraph 
is partially symmetric. Given an $(r,s)$-directed hypergraph $G$, the polynomial 
form of $G$ is a multilinear function 
$P_G(\bm{x},\bm{y}): \mathbb{R}^m\times\mathbb{R}^n\to \mathbb{R}$ defined 
for any vectors $\bm{x}=(x_1,x_2,\ldots,x_m)^{\mathrm{T}}\in\mathbb{R}^m$, 
$\bm{y}=(y_1,y_2,\ldots,y_n)^{\mathrm{T}}\in\mathbb{R}^n$ as
\[
P_G(\bm{x},\bm{y}):=\mathcal{A}(G)\bm{x}^r\bm{y}^s=
\sum_{\substack{e\in E(G),\,T(e)=\{i_1,\ldots,i_r\} \\ H(e)=\{j_1,\ldots,j_s\}}}
x_{i_1}\cdots x_{i_r}y_{j_1}\cdots y_{j_s}.  
\]
  
We here give the definition of the $(p,q)$-spectral radius of an $(r,s)$-directed 
hypergraph.

\begin{definition}
Let $G$ be an $(r,s)$-directed hypergraph. For any $p$, $q\geq 1$, the 
{\em $(p,q)$-spectral radius} $\lambda_{p,q}(G)$ of $G$ is defined as
\begin{equation}\label{eq:(p,q)-spectral radius}
\lambda_{p,q}(G):=\max\left\{P_G(\bm{x},\bm{y}): \bm{x}\in\mathbb{S}^{m-1}_p, 
\bm{y}\in\mathbb{S}^{n-1}_q\right\}.
\end{equation}
In particular, if $p=2r$, $q=2s$, then $\lambda_{2r,2s}(G)$ is called the 
{\em spectral radius} of $G$, denoted by $\rho(G)$. That is
\begin{equation}\label{eq:spectral radius}
\rho(G):=\max\left\{P_G(\bm{x},\bm{y}): \bm{x}\in\mathbb{S}^{m-1}_{2r}, 
\bm{y}\in\mathbb{S}^{n-1}_{2s}\right\}.
\end{equation}
If $\bm{x}\in\mathbb{S}^{m-1}_p$ and $\bm{y}\in\mathbb{S}^{n-1}_q$ are two 
vectors such that $\lambda_{p,q}(G)=P_G(\bm{x},\bm{y})$, then $(\bm{x},\bm{y})$ 
will be called an {\em eigenpair} to $\lambda_{p,q}(G)$.
\end{definition}

Notice that $\mathbb{S}^{m-1}_p$ and $\mathbb{S}^{n-1}_q$ are compact sets, and 
$P_G(\bm{x},\bm{y})$ is continuous, thus $\lambda_{p,q}(G)$ is well defined. 
Clearly, equation \eqref{eq:(p,q)-spectral radius} is equivalent to
\begin{equation}\label{eq:Equi-max}
\lambda_{p,q}(G)=\max_{\bm{x}\neq0,\,\bm{y}\neq 0}
\frac{P_G(\bm{x},\bm{y})}{||\bm{x}||_p^r\cdot||\bm{y}||_q^s}.    
\end{equation}

\begin{remark}
Recall that $||\bm{x}||_{\infty}=\max_{1\leq i\leq m}\{|x_i|\}$ and
$||\bm{y}||_{\infty}=\max_{1\leq j\leq n}\{|y_j|\}$. Therefore
$\lim_{p,\,q\to\infty}\lambda_{p,q}(G)=|G|$. Denote $G_T$ the 
$r$-uniform hypergraph with $V(G_T)=T(G)$ and $\{i_1,i_2,\ldots,i_r\}\in E(G_T)$
if and only if $T(e)=\{i_1,i_2,\ldots,i_r\}$ for some arc $e\in E(G)$. 
Similarly, we can define the $s$-uniform hypergraph $G_H$. If $G_T$ 
has no repeated edges, then 
\[
\lim_{q\to\infty}\lambda_{p,q}(G)=\frac{\lambda^{(p)}(G_T)}{r},
\]
where $\lambda^{(p)}(G_T)$ is the scaled $p$-spectral radius of $G_T$
by removing a constant factor $(r-1)!$ from \cite{Keevash2014}.
If $G_H$ has no repeated edges, we also have
\[
\lim_{p\to\infty}\lambda_{p,q}(G)=\frac{\lambda^{(q)}(G_H)}{s}.
\]
\end{remark}

\begin{remark}
If $r=s=1$, the $(r,s)$-directed hypergraphs are exactly the directed graphs.
Let $G$ be a directed graph, $A=(a_{ij})$ be a $m\times n$ matrix with 
row indexed by the set $T(G)$ and column indexed by the set $H(G)$,
where $a_{ij}=1$ if $(i,j)$ is an arc of $G$, and $0$ otherwise. By 
\eqref{eq:spectral radius}, the spectral radius $\rho(G)$ of $G$ is 
exactly the largest singular value of $A$.   
\end{remark}
    
If $(\bm{x},\bm{y})\in\mathbb{S}^{m-1}_p\times\mathbb{S}^{n-1}_q$ is an eigenpair to 
$\lambda_{p,q}(G)$, then the vectors $\bm{x}'=|\bm{x}|$ and $\bm{y}'=|\bm{y}|$ also 
satisfy $||\bm{x}'||_p=||\bm{y}'||_q=1$ and so
\[
\lambda_{p,q}(G)=P_G(\bm{x},\bm{y})\leq P_G(\bm{x}',\bm{y}')\leq\lambda_{p,q}(G),  
\]
which yields $\lambda_{p,q}(G)=P_G(\bm{x}',\bm{y}')$. Therefore, there are 
always nonnegative vectors $\bm{x}$, $\bm{y}$ such that $||\bm{x}||_p=||\bm{y}||_q=1$ 
and $\lambda_{p,q}(G)=P_G(\bm{x},\bm{y})$. 

Let $(\bm{x},\bm{y})\in\mathbb{S}^{m-1}_{p,+}\times\mathbb{S}^{n-1}_{q,+}$ 
be an eigenpair to $\lambda_{p,q}(G)$. By Lagrange's method, there exists 
a $\mu$ such that for each $i\in T(G)$ with $x_i>0$,
\[
\frac{\partial P_G(\bm{x},\bm{y})}{\partial x_i}=
\sum_{e\in E(G),\,i\in T(e)}\Bigg(\prod_{u\in T(e),\,u\neq i}x_u\Bigg)
\Bigg(\prod_{v\in H(e)}y_v\Bigg)=p\mu x_i^{p-1}.    
\]
Multiplying the $i$-th equation by $x_i$ and adding them all, we have
\[
\sum_{i\in T(G)}\sum_{e\in E(G),\,i\in T(e)}\Bigg(\prod_{u\in T(e)}x_u\Bigg)
\Bigg(\prod_{v\in H(e)}y_v\Bigg)=p\mu\sum_{i\in T(G)} x_i^p=p\mu.      
\]
It follows that
\[
r\sum_{e\in E(G)}\Bigg(\prod_{u\in T(e)}x_u\Bigg)
\Bigg(\prod_{v\in H(e)}y_v\Bigg)=p\mu,
\]
which yields $r\lambda_{p,q}(G)=p\mu$. Therefore
\[
\sum_{e\in E(G),\,i\in T(e)}\Bigg(\prod_{u\in T(e),\,u\neq i}x_u\Bigg)
\Bigg(\prod_{v\in H(e)}y_v\Bigg)=r\lambda_{p,q}(G) x_i^{p-1}.    
\]
Similarly, for each $j\in H(G)$ with $y_j>0$, we have
\[
\sum_{e\in E(G),\,j\in H(e)}\Bigg(\prod_{u\in T(e)}x_u\Bigg)
\Bigg(\prod_{v\in H(e),\,v\neq j}y_v\Bigg)=s\lambda_{p,q}(G) y_j^{q-1}.
\]
Hence, we obtain the {\em weak eigenequations} of an $(r,s)$-directed hypergraph 
$G$ as follows:
\begin{equation}\label{eq:weakcharacteristic equation}
\begin{dcases}
\sum_{e\in E(G),\,T(e)=\{i,i_2,\ldots,i_r\} \atop H(e)=\{j_1,j_2,\ldots,j_s\}}
x_ix_{i_2}\cdots x_{i_r}y_{j_1}\cdots y_{j_s}=r\lambda_{p,q}(G) x_i^{p},~i\in T(G),\\[2mm]
\sum_{e\in E(G),\,T(e)=\{i_1,i_2,\ldots,i_r\} \atop H(e)=\{j,j_2,\ldots,j_s\}}
x_{i_1}\cdots x_{i_r}y_jy_{j_2}\cdots y_{j_s}=s\lambda_{p,q}(G) y_j^{q},~j\in H(G).
\end{dcases}
\end{equation}
If all $x_i>0$ and $y_j>0$, we can cancel one factor of $x_i$ and $y_j$, 
and obtain the {\em strong eigenequations} of an $(r,s)$-directed hypergraph 
$G$ as follows:
\begin{equation}\label{eq:strongcharacteristic equation}
\begin{dcases}
\sum_{e\in E(G),\,T(e)=\{i,i_2,\ldots,i_r\} \atop H(e)=\{j_1,j_2,\ldots,j_s\}}
x_{i_2}\cdots x_{i_r}y_{j_1}\cdots y_{j_s}=r\lambda_{p,q}(G) x_i^{p-1},~i\in T(G),\\[2mm]
\sum_{e\in E(G),\,T(e)=\{i_1,i_2,\ldots,i_r\} \atop H(e)=\{j,j_2,\ldots,j_s\}}
x_{i_1}\cdots x_{i_r}y_{j_2}\cdots y_{j_s}=s\lambda_{p,q}(G) y_j^{q-1},~j\in H(G).
\end{dcases}
\end{equation}

Before concluding this section, we list some inequalities which will be used in 
the sequel (see \cite{Hardy1988:Inequalities}). 
\begin{enumerate}
\item[(1)] {\bfseries (Generalized H\"older's inequality)}
Let $a_{ij}\geq 0$, $i\in [n]$, $j\in[m]$, be nonnegative real numbers, and 
$\alpha_1$, $\alpha_2$,\,$\ldots$, $\alpha_m$ be positive real numbers such 
that $\sum_{j=1}^m 1/\alpha_j=1$. Then
\begin{equation}\label{eq:Gener Holder}
\sum_{i=1}^n\Bigg(\prod_{j=1}^ma_{ij}\Bigg)\leq
\prod_{j=1}^m\Bigg(\sum_{i=1}^na_{ij}^{\alpha_j}\Bigg)^{1/\alpha_j}.    
\end{equation}
Equality holds if and only if either 
$\bm{x}^{(j)}:=\big(a_{1j}^{\alpha_j},a_{2j}^{\alpha_j},\ldots,a_{nj}^{\alpha_j}\big)^{\mathrm{T}}$, 
$j\in[m]$ are all proportional, or one of $\bm{x}^{(j)}$ is zero vector.  

\item[(2)] Let $a_{ij}\geq 0$, $i\in [n]$, $j\in[m]$. Suppose that $\alpha_1$,
$\alpha_2$,\,$\ldots$, $\alpha_m$ are positive real numbers such that 
$\sum_{j=1}^m 1/\alpha_j>1$, then
\begin{equation}\label{eq:GeqHolder}
\sum_{i=1}^n\Bigg(\prod_{j=1}^ma_{ij}\Bigg)\leq
\prod_{j=1}^m\Bigg(\sum_{i=1}^na_{ij}^{\alpha_j}\Bigg)^{1/\alpha_j}.    
\end{equation}
Equality holds if and only if either one of $\bm{x}^{(j)}$ is zero vector or all 
but one of each vector is zero, and in the latter case, those which are positive 
have the same rank.

\item[(3)] {\bfseries (Power Mean inequality)} Let $a_1$, $a_2$,\,$\ldots$, $a_n$ 
be positive real numbers, and $p$, $q$ be two nonzero real numbers such that $p<q$. Then
\begin{equation}\label{eq:PM}
\left(\frac1n\sum_{i=1}^na_i^p\right)^{1/p}\leq
\left(\frac1n\sum_{i=1}^na_i^q\right)^{1/q},
\end{equation}
with equality if and only if $a_1=a_2=\cdots=a_n$.

\item[(4)] {\bfseries (Jensen's inequality)}
Let $a_i\geq 0$, $i\in [n]$. If $0<q<p$, then
\begin{equation}\label{eq:Jensen inequality}
\Bigg(\sum_{i=1}^na_i^p\Bigg)^{1/p}\leq\Bigg(\sum_{i=1}^na_i^q\Bigg)^{1/q},   
\end{equation}
with equality holds if and only if all but one of $a_1$, $a_2$,\,$\ldots$, $a_n$ 
are zero.
\end{enumerate}

\section{Basic properties of $\lambda_{p,q}(G)$}
\label{sec3}
The first part of this section is devoted to some basic bounds about 
$\lambda_{p,q}(G)$. In the second part, we give an relation of 
$(p,q)$-spectral radius between $G$ and its anadiplosis components. 

Inspired from the ideas in \cite{Nikiforov2014:Analytic Methods},
we first consider $\lambda_{p,q}(G)$ as a function in $p$ and $q$ for a 
fixed $(r,s)$-directed hypergraph $G$. By changing the variables in 
\eqref{eq:(p,q)-spectral radius}, we obtain  
\[
\lambda_{p,q}(G)=\max_{||\bm{x}||_1=1,\,||\bm{y}||_1=1}
\sum_{e\in E(G),\,T(e)=\{i_1,\ldots,i_r\},\atop H(e)=\{j_1,\ldots,j_s\}}
(x_{i_1}\cdots x_{i_r})^{1/p}(y_{j_1}\cdots y_{j_s})^{1/q}.
\]
Assume $p$, $q$, $p'$, $q'\geq 1$ are positive real numbers. Applying the 
mean value theorem, we have
\[
(x_{i_1}\cdots x_{i_r})^{1/p}(y_{j_1}\cdots y_{j_s})^{1/q}-
(x_{i_1}\cdots x_{i_r})^{1/p'}(y_{j_1}\cdots y_{j_s})^{1/q'}\leq |p-p'|+|q-q'|.
\]
It follows that
\begin{align*}
	|\lambda_{p,q}(G)-\lambda_{p',q'}(G)| & \leq |G|(|p-p'|+|q-q'|)\\
	& \leq \sqrt{2}\,|G|\sqrt{(p-p')^2+(q-q')^2},
\end{align*}
which yields that $\lambda_{p,q}(G)$ is a continuous function in $p$ and $q$.
In Section \ref{sec5} we shall return to this topic, and give more properties on 
the function $\lambda_{p,q}(G)$.

\subsection{Some bounds of $\lambda_{p,q}(G)$}
By equation \eqref{eq:(p,q)-spectral radius}, $\lambda_{p,q}(G)$ is monotone 
with respect to arc addition.

\begin{proposition}\label{prop:H<G}
Let $G_1$ and $G_2$ be two $(r,s)$-directed hypergraphs, and $G_1\subseteq G_2$. 
Then $\lambda_{p,q}(G_1)\leq\lambda_{p,q}(G_2)$.
\end{proposition}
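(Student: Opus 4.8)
The plan is to take an optimal nonnegative pair for $G_1$, pad it with zeros so that it becomes a feasible pair for $G_2$, and observe that passing from $G_1$ to $G_2$ only adds nonnegative terms to the polynomial form. Throughout, write $m_i=|T(G_i)|$ and $n_i=|H(G_i)|$ for $i\in\{1,2\}$. Since $G_1\subseteq G_2$ means $V(G_1)\subseteq V(G_2)$ and $E(G_1)\subseteq E(G_2)$, every arc of $G_1$ is an arc of $G_2$, and hence $T(G_1)\subseteq T(G_2)$ and $H(G_1)\subseteq H(G_2)$; in particular $m_1\leq m_2$ and $n_1\leq n_2$.

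First, recall from the discussion preceding \autoref{prop:H<G} that $\lambda_{p,q}(G_1)$ is attained at some nonnegative pair $(\bm{x},\bm{y})\in\mathbb{S}^{m_1-1}_{p,+}\times\mathbb{S}^{n_1-1}_{q,+}$, i.e. $\lambda_{p,q}(G_1)=P_{G_1}(\bm{x},\bm{y})$ with $\bm{x},\bm{y}\geq 0$. Define $\tilde{\bm{x}}\in\mathbb{R}^{m_2}$ by $\tilde{x}_u=x_u$ for $u\in T(G_1)$ and $\tilde{x}_u=0$ for $u\in T(G_2)\setminus T(G_1)$, and define $\tilde{\bm{y}}\in\mathbb{R}^{n_2}$ analogously. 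Then $\tilde{\bm{x}},\tilde{\bm{y}}\geq 0$ and $||\tilde{\bm{x}}||_p=||\bm{x}||_p=1$, $||\tilde{\bm{y}}||_q=||\bm{y}||_q=1$, so $(\tilde{\bm{x}},\tilde{\bm{y}})$ is feasible in \eqref{eq:(p,q)-spectral radius} for $G_2$.

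Next, evaluate $P_{G_2}(\tilde{\bm{x}},\tilde{\bm{y}})$ and split the sum over $E(G_2)=E(G_1)\cup\big(E(G_2)\setminus E(G_1)\big)$. For an arc $e\in E(G_1)$ we have $T(e)\subseteq T(G_1)$ and $H(e)\subseteq H(G_1)$, so $\tilde{\bm{x}}$ agrees with $\bm{x}$ and $\tilde{\bm{y}}$ agrees with $\bm{y}$ on these coordinates; hence the corresponding summand equals $\big(\prod_{u\in T(e)}x_u\big)\big(\prod_{v\in H(e)}y_v\big)$, and these terms add up precisely to $P_{G_1}(\bm{x},\bm{y})=\lambda_{p,q}(G_1)$. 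For an arc $e\in E(G_2)\setminus E(G_1)$, the summand is a product of nonnegative numbers, hence $\geq 0$. Therefore $P_{G_2}(\tilde{\bm{x}},\tilde{\bm{y}})\geq\lambda_{p,q}(G_1)$, and by the definition \eqref{eq:(p,q)-spectral radius} of $\lambda_{p,q}(G_2)$ as a maximum over feasible pairs, $\lambda_{p,q}(G_2)\geq P_{G_2}(\tilde{\bm{x}},\tilde{\bm{y}})\geq\lambda_{p,q}(G_1)$, which is the claim.

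The argument is essentially bookkeeping; the only points requiring a little care are that padding by zeros preserves the $\ell_p$/$\ell_q$ norms, and that the newly added arcs contribute nonnegatively — which is exactly why we start from a \emph{nonnegative} optimizer rather than an arbitrary one. No genuine obstacle is expected.
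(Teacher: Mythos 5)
Your proof is correct and matches the paper's (implicit) argument: the paper simply asserts the proposition as an immediate consequence of the definition in \eqref{eq:(p,q)-spectral radius}, i.e.\ monotonicity under arc addition, which is exactly the zero-padding-plus-nonnegative-optimizer argument you spell out. No issues.
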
  

The following is a simple corollary of \autoref{prop:H<G}.

 \begin{corollary}
 Let $G$ be an $(r,s)$-directed hypergraph with maximum out-degree $\Delta^+$
 and maximum in-degree $\Delta^-$. Then 
 \[
 \lambda_{p,q}(G)\geq\frac{1}{r^{r/p}s^{s/q}}\cdot
 \max\left\{(\Delta^+)^{1-((r-1)/p+s/q)},~(\Delta^-)^{1-(r/p+(s-1)/q)}\right\}.    
 \]
 \end{corollary}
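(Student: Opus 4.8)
The plan is to exhibit, for each inequality inside the maximum, an explicit pair of test vectors $(\bm{x},\bm{y})$ supported on a single arc of maximum degree and then invoke \autoref{prop:H<G}. Concretely, let $e_0\in E(G)$ be an arc whose tail vertex $i_0$ has out-degree $d_{i_0}^+=\Delta^+$, and let $G_0\subseteq G$ be the directed subhypergraph consisting of precisely the $\Delta^+$ arcs containing $i_0$ in their tails. By \autoref{prop:H<G} it suffices to bound $\lambda_{p,q}(G_0)$ from below, and for this I will choose $\bm{x}$ and $\bm{y}$ to spread mass uniformly over the relevant vertices.

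The key computation is to choose $\bm{x}$ uniform on $T(G_0)$ and $\bm{y}$ uniform on $H(G_0)$ — or, a cleaner variant, put mass $\tfrac12$ on $i_0$ and spread the remaining mass over the other tail vertices, and similarly for $\bm{y}$; but the simplest route that still yields the stated bound is to note that each of the $\Delta^+$ arcs in $G_0$ contributes a product of $r$ tail-coordinates and $s$ head-coordinates, so if every nonzero coordinate of $\bm{x}$ equals $a$ and every nonzero coordinate of $\bm{y}$ equals $b$, then $P_{G_0}(\bm{x},\bm{y})=\Delta^+\,a^r b^s$ (using that $i_0$ lies in all $\Delta^+$ tails). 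The normalization $\|\bm{x}\|_p=1$ forces $a=|T(G_0)|^{-1/p}$ and $\|\bm{y}\|_q=1$ forces $b=|H(G_0)|^{-1/q}$. To obtain the claimed exponents one instead assigns $x_{i_0}$ a slightly larger value and balances; the bookkeeping that produces exactly $\tfrac{1}{r^{r/p}s^{s/q}}(\Delta^+)^{1-((r-1)/p+s/q)}$ comes from taking the tail coordinates other than $i_0$ to number at most $(r-1)\Delta^+$ and the head coordinates at most $s\Delta^+$, giving them value $((r-1)\Delta^+)^{-1/p}$ and $(s\Delta^+)^{-1/q}$ respectively while $x_{i_0}$ absorbs a constant, after which $P_{G_0}(\bm{x},\bm{y})$ evaluates to the desired quantity. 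The in-degree bound is entirely symmetric: replace "tail" by "head", $r$ by $s$, $p$ by $q$, and $\Delta^+$ by $\Delta^-$.

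Taking the better of the two test-vector constructions yields the maximum on the right-hand side, and \autoref{prop:H<G} lifts the bound from $G_0$ to $G$, completing the argument.

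The main obstacle is purely combinatorial accounting rather than anything deep: one must be careful that the vertices appearing in the $\Delta^+$ chosen arcs may overlap heavily, so the count of distinct nonzero coordinates is only \emph{at most} $(r-1)\Delta^++1$ (tails) and $s\Delta^+$ (heads), and since the $\ell_p$-norm of a uniform vector \emph{decreases} as its support shrinks, using these upper bounds on the support sizes is exactly what makes the inequality go in the correct direction — a smaller support lets each coordinate be larger, which only helps $P_{G_0}$. Verifying that the exponents of $\Delta^+$ collect correctly, namely $1-\tfrac{r-1}{p}-\tfrac{s}{q}$ after the $a^r b^s$ and the normalization factors are combined, is the one place where a sign or an off-by-one in the exponent could slip in, so I would write that exponent arithmetic out explicitly.
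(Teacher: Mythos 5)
Your overall strategy --- restrict to the $\Delta^+$ arcs through a vertex $i_0$ of maximum out-degree, choose a test vector supported on those arcs, and lift the bound via \autoref{prop:H<G} or directly via \eqref{eq:Equi-max} --- is the right one, and it matches the paper's intent: the paper offers no proof beyond citing \autoref{prop:H<G}, and its later computation of the $(p,q)$-spectral radius of an out-hyperstar with $k$ arcs gives exactly $k^{1-((r-1)/p+s/q)}/(r^{r/p}s^{s/q})$, which is the quantity here with $k=\Delta^+$. The one step that fails as written is your actual coordinate assignment. You give each of the at most $(r-1)\Delta^+$ non-center tail vertices the value $((r-1)\Delta^+)^{-1/p}$ ``while $x_{i_0}$ absorbs a constant.'' But in the case where the $\Delta^+$ chosen arcs overlap only in $i_0$ --- which is the generic, extremal case of a genuine out-hyperstar --- those coordinates alone already satisfy $\sum_{v\neq i_0}x_v^p=1$, leaving no $\ell_p$-mass for $x_{i_0}$: either $x_{i_0}=0$ (and then $P_{G_0}=0$) or $\|\bm{x}\|_p>1$. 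So the bookkeeping you deferred does not, with these values, produce the claimed constant.

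The fix is a different split of the unit mass: set $x_{i_0}=r^{-1/p}$, set $x_v=(r\Delta^+)^{-1/p}$ for every other tail vertex of the chosen arcs, and set $y_w=(s\Delta^+)^{-1/q}$ for every head vertex of the chosen arcs (zero elsewhere). Then $\|\bm{x}\|_p^p\leq \tfrac{1}{r}+\tfrac{(r-1)\Delta^+}{r\Delta^+}=1$ and $\|\bm{y}\|_q^q\leq \tfrac{s\Delta^+}{s\Delta^+}=1$, while each of the $\Delta^+$ arcs contributes exactly $r^{-1/p}(r\Delta^+)^{-(r-1)/p}(s\Delta^+)^{-s/q}$, so that $P_G(\bm{x},\bm{y})=(\Delta^+)^{1-((r-1)/p+s/q)}/(r^{r/p}s^{s/q})$; by \eqref{eq:Equi-max} and $\|\bm{x}\|_p,\|\bm{y}\|_q\leq 1$ this is a lower bound for $\lambda_{p,q}(G)$. (Your observation that vertex overlaps only shrink the norms, hence only help, is precisely what justifies working with the inequalities $\|\bm{x}\|_p\leq1$, $\|\bm{y}\|_q\leq1$ rather than exact normalization.) The in-degree half is symmetric, and taking the larger of the two bounds finishes the proof. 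With that correction your argument is complete and coincides with the natural proof the paper has in mind.
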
 


\begin{proposition}
Let $G$ be an $(r,s)$-directed hypergraph with minimum out-degree $\delta^+$
and minimum in-degree $\delta^-$. Then 
\[
\lambda_{p,q}(G)\geq |G|^{1-(r/p+s/q)}
\bigg(\frac{\delta^+}{r}\bigg)^{r/p}\bigg(\frac{\delta^-}{s}\bigg)^{s/q}.  
\]
\end{proposition}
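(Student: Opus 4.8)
The plan is to produce an explicit test pair $(\bm{x},\bm{y})$ in the feasible region $\mathbb{S}^{m-1}_{p,+}\times\mathbb{S}^{n-1}_{q,+}$, evaluate $P_G(\bm{x},\bm{y})$ on it, and show that the value is at least the claimed bound; since $\lambda_{p,q}(G)$ is the maximum of $P_G$ over that region (by \eqref{eq:(p,q)-spectral radius}), this yields the inequality. The natural candidate is the uniform choice $x_i = m_T^{-1/p}$ for all $i\in T(G)$ and $y_j = n_H^{-1/q}$ for all $j\in H(G)$, where $m_T=|T(G)|$ and $n_H=|H(G)|$; this clearly satisfies $\|\bm{x}\|_p=\|\bm{y}\|_q=1$. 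On this pair every arc term contributes $\big(m_T^{-1/p}\big)^r\big(n_H^{-1/q}\big)^s = m_T^{-r/p}\,n_H^{-s/q}$, so $P_G(\bm{x},\bm{y}) = |G|\, m_T^{-r/p}\, n_H^{-s/q}$.

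Next I would bound $m_T$ and $n_H$ from above in terms of $|G|$, $\delta^+$, $\delta^-$. Counting incidences between arcs and their tails gives $r\,|G| = \sum_{i\in T(G)} d_i^+ \ge m_T\,\delta^+$, hence $m_T \le r|G|/\delta^+$; symmetrically $n_H \le s|G|/\delta^-$. Substituting these into the expression for $P_G(\bm{x},\bm{y})$ — and using that the exponents $-r/p$ and $-s/q$ are negative, so larger $m_T,n_H$ make the bound smaller — gives
\[
\lambda_{p,q}(G)\ \ge\ |G|\Big(\tfrac{r|G|}{\delta^+}\Big)^{-r/p}\Big(\tfrac{s|G|}{\delta^-}\Big)^{-s/q}
= |G|^{1-(r/p+s/q)}\Big(\tfrac{\delta^+}{r}\Big)^{r/p}\Big(\tfrac{\delta^-}{s}\Big)^{s/q},
\]
which is exactly the claimed bound.

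There is no serious obstacle here; the only point requiring a little care is the degenerate possibility that $\delta^+=0$ or $\delta^-=0$, but in an $(r,s)$-directed hypergraph with at least one arc every vertex of $T(G)$ lies in some tail and every vertex of $H(G)$ lies in some head, so $\delta^+,\delta^-\ge 1$ and the division is legitimate; if $|G|=0$ the statement is vacuous. One should also note that the uniform vector is feasible only because $T(G)$ and $H(G)$ are precisely the supports used to define $\mathbb{S}^{m-1}_p$ and $\mathbb{S}^{n-1}_q$ (with $m=|T(G)|$, $n=|H(G)|$ by the standing convention), so no ``unused'' coordinates dilute the norm. Beyond that the argument is a one-line substitution after the incidence count, so I would keep the write-up short.
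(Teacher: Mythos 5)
Your proposal is correct and is essentially identical to the paper's proof: both use the uniform test vectors $x_i=m^{-1/p}$, $y_j=n^{-1/q}$, compute $P_G(\bm{x},\bm{y})=|G|\,m^{-r/p}n^{-s/q}$, and then invoke the incidence counts $\sum_{v\in T(G)}d_v^+=r|G|$ and $\sum_{v\in H(G)}d_v^-=s|G|$ to bound $m$ and $n$ via the minimum degrees. Your added remarks on the degenerate cases and on the feasibility of the uniform vectors are fine but not needed beyond what the paper already assumes.
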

        
\begin{proof}
Let $\bm{x}=m^{-1/p}(1,1,\ldots,1)^{\mathrm{T}}\in\mathbb{R}^m$,
$\bm{y}=n^{-1/q}(1,1,\ldots,1)^{\mathrm{T}}\in\mathbb{R}^n$.  
By \eqref{eq:(p,q)-spectral radius}, we have
\begin{align*}
\lambda_{p,q}(G)\geq P_G(\bm{x},\bm{y}) 
& =\sum_{e\in E(G)}\frac{1}{m^{r/p}n^{s/q}}
  =\frac{|G|}{m^{r/p}n^{s/q}}\\
& =\frac{|G|^{1-(r/p+s/q)}}{r^{r/p}s^{s/q}}
   \bigg(\frac{r|G|}{m}\bigg)^{r/p}\bigg(\frac{s|G|}{n}\bigg)^{s/q}\\
&  \geq |G|^{1-(r/p+s/q)}
   \bigg(\frac{\delta^+}{r}\bigg)^{r/p}\bigg(\frac{\delta^-}{s}\bigg)^{s/q},
\end{align*}
the last inequality follows from the fact:
\[
\sum_{v\in T(G)}d_v^+=r|G|,~
\sum_{v\in H(G)}d_v^-=s|G|.    
\]
The proof is completed. 
\end{proof}

\subsection{Connectedness of $(r,s)$-directed hypergraphs}
We first introduce the definition of bipartite split of a directed 
hypergraph, which play an important role in the study of anadiplosis 
connectedness.

\begin{definition}\label{def:bipartite split}
Let $G$ be a directed hypergraph, the {\em bipartite split} $\mathcal{B}(G)$ 
of $G$ is define as a bipartite directed hypergraph with the same arc set 
as $G$ and bipartition $V_T\,\dot\cup\,V_H$, where $V_T$ is a copy of 
$T(G)$, and $V_H$ is a copy of $H(G)$. 
\end{definition}

\begin{example}
Let $G$ be a directed graph obtained by giving an orientation to $K_4$, 
the bipartite split $\mathcal{B}(G)$ of $G$ is shown as follows:

\begin{center}
\begin{tikzpicture}
\foreach \i in {1,3,5,7}
{
\coordinate (u\i) at (45*\i:2.2);
\filldraw[fill=black] (u\i) circle (0.08);
}
\draw[directed] (u3)--(u1) node[right=1.5mm] {$4$};
\draw[directed] (u7)--(u3) node[left=1.5mm] {$1$};
\draw[directed] (u3)--(u5) node[left=1.5mm] {$2$};
\draw[directed] (u7) node[right=1.5mm] {$3$}--(u5);
\draw[directed] (u7)--(u1);
\draw[directed] (u1)--(u5);
\node[below=2mm] at ($(u5)!0.5!(u7)$) {$G$};
\end{tikzpicture}\hspace{15mm}
\begin{tikzpicture}
\draw[fill=gray!20,line width=.8pt,rounded corners=8pt] (-0.8,0.7) rectangle (0.5,4.86);
\draw[fill=gray!20,line width=.8pt,rounded corners=8pt] (2.05,0.7) rectangle (3.35,4.86);
\node at (-0.12,4.476) {$T(G)$};
\node at (2.7,4.476) {$H(G)$};

\foreach \i in {1,2,4}
{\coordinate (u\i) at (0,\i);
\filldraw[fill=black] (u\i) circle (0.08);}

\foreach \j in {1,3,4}
{\coordinate (v\j) at (2.6,\j);
\filldraw[fill=black] (v\j) circle (0.08);}

\draw[directed] (u1) node[left=1mm] {$4$}--(v3);
\draw[directed] (u2) node[left=1mm] {$3$}--(v1);
\draw[directed] (u2)--(v3) node[right=1mm] {$2$};
\draw[directed] (u2)--(v4) node[right=1mm] {$1$};
\draw[directed] (u4)--(v1) node[right=1mm] {$4$};
\draw[directed] (u4) node[left=1mm] {$1$}--(v3);
\node[below=5mm] at ($(u1)!0.5!(v1)$) {$\mathcal{B}(G)$};
\end{tikzpicture}
\end{center}
\end{example}

Let $e$ be an arc of $G$, and $T(e)=\{i_1,\ldots,i_r\}$, $H(e)=\{j_1,\ldots,j_s\}$. 
We denote $\overline{e}$ the set consisting of $T(e)$ and $H(e)$, i.e., 
$\overline{e}=\{i_1,\ldots,i_r,j_1,\ldots,j_s\}$. For convenience, we denote 
$\overline{G}$ the underlying of $\mathcal{B}(G)$ in the sequel. The following 
lemma give an equivalent definition of anadiplosis connectedness.

\begin{lemma}\label{lem:connected iff underlying}
An $(r,s)$-directed hypergraph $G$ is anadiplosis connected if and only 
if $\overline{G}$ is connected. 
\end{lemma}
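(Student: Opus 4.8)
The plan is to prove both directions by translating anadiplosis walks in $G$ into ordinary walks in the underlying hypergraph $\overline{G}$ of the bipartite split $\mathcal{B}(G)$, and vice versa. Recall that $\overline{G}$ has vertex set $V_T\,\dot\cup\,V_H$ (disjoint copies of $T(G)$ and $H(G)$) and for each arc $e$ of $G$ an edge $\overline{e}$ containing the tail-copies of $T(e)$ and the head-copies of $H(e)$. The key observation is that a step $v_i\in T(e_i)\cap T(e_{i+1})$ of an anadiplosis walk corresponds to the tail-copy of $v_i$ lying in $\overline{e_i}\cap\overline{e_{i+1}}$, a step $v_i\in H(e_i)\cap H(e_{i+1})$ corresponds to the head-copy of $v_i$ lying in that intersection, and a transition between the tail-copy and head-copy of the same vertex $u$ (which is exactly what an anadiplosis semi-cycle at $u$ records) happens precisely when $u\in T(e)\cap H(e')$ for arcs $e,e'$, i.e.\ the two copies of $u$ both appear in $\overline{G}$ and are joined by a path through those arcs.

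For the forward direction, assume $G$ is anadiplosis connected. First I would show every pair of vertices of $\overline{G}$ is joined by a path. Take two vertices $a,b$ of $\overline{G}$; each is a tail- or head-copy of some vertex of $G$. If $a$ is the tail-copy of $u$ and $b$ is the tail-copy of $w$ with $u\ne w$, a $u$--$w$ anadiplosis walk in $G$ gives a walk in $\overline{G}$ between tail-copies (and if at some step the walk switches between tail and head side, that step is still within a single $\overline{e_i}$, so connectivity is preserved); the mixed cases (tail-copy to head-copy, head-copy to head-copy, or $u=w$ but different copies) are handled by appending an anadiplosis semi-cycle at the relevant vertex, which by the observation above connects its tail-copy to its head-copy inside $\overline{G}$. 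So $\overline{G}$ is connected.

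For the converse, assume $\overline{G}$ is connected. Given $u\ne w$ in $V(G)$, pick copies $a$ of $u$ and $b$ of $w$ present in $\overline{G}$ and a path from $a$ to $b$; reading off the arcs traversed and projecting copies back to their underlying vertices yields a $u$--$w$ anadiplosis walk (consecutive edges $\overline{e_i},\overline{e_{i+1}}$ share a common copied vertex, which is either a common tail vertex or a common head vertex, giving exactly the alternation condition). For the semi-cycle requirement at $u\in T(G)\cap H(G)$: both the tail-copy $u_T$ and head-copy $u_H$ lie in $\overline{G}$, and since $\overline{G}$ is connected there is a $u_T$--$u_H$ path; this path uses at least one edge, and since the two endpoints are copies of the \emph{same} vertex on \emph{opposite} sides, somewhere along the way the projection-back produces a closed anadiplosis walk that is an anadiplosis semi-cycle at $u$ (one can extract from the path a subwalk whose first arc has $u$ in its tail and last arc has $u$ in its head, or the reverse). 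A little care is needed to ensure the extracted semi-cycle has at least two distinct arcs and satisfies the distinctness condition in the definition; if the naive path repeats arcs one takes a minimal closed subwalk.

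The main obstacle I expect is the bookkeeping in this last point — verifying that ``$\overline{G}$ connected'' really forces an \emph{anadiplosis semi-cycle} (not just a closed anadiplosis walk) at every vertex in $T(G)\cap H(G)$, i.e.\ extracting a subwalk with distinct arcs $e_1,\dots,e_\ell$ ($\ell\ge 2$) and the correct tail/head incidence pattern at the repeated endpoint. The honest way to do this is: take a shortest path in $\overline{G}$ from $u_T$ to $u_H$, observe it cannot use the same edge twice (shortest), read off the arc sequence, and check that because $u_T$ and $u_H$ are on opposite sides the incidence condition $v_0=v_\ell=u$ with $u\in T(e_1)$ and $u\in H(e_\ell)$ (or the mirror image) holds automatically, and $\ell\ge 2$ since $u_T\ne u_H$ cannot be joined by a single edge $\overline{e}$ unless $u\in T(e)\cap H(e)=\emptyset$, which is impossible. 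The forward direction's handling of switches between the two sides of a single $\overline{e_i}$ is routine but should be stated explicitly.
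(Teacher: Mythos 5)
Your proof is correct and follows essentially the same route as the paper's: translate anadiplosis walks and semi-cycles into walks in $\overline{G}$ and back, using the semi-cycle at a vertex of $T(G)\cap H(G)$ precisely to join its two copies. You are in fact more careful than the paper at the one delicate point --- extracting from a shortest $u_T$--$u_H$ path a semi-cycle with distinct arcs, $\ell\ge 2$, and the correct tail/head incidence at the endpoints --- which the paper's proof simply asserts.
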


\begin{proof}
($\Longrightarrow$) Assume $G$ is anadiplosis connected, then for 
any $u\neq v$, there is a $u$\,--\,$v$ anadiplosis walk:
$(u=v_0)e_1v_1e_2v_2\cdots v_{\ell-1}e_{\ell}(v_{\ell}=v)$ in $G$. Clearly, 
$(u=v_0)\,\overline{e}_1v_1\overline{e}_2v_2\cdots v_{\ell-1}\overline{e}_{\ell}(v_{\ell}=v)$
is a $u$\,--\,$v$ walk in $\overline{G}$. Also, for each vertex $u\in V(\overline{G})$ with 
$u\in T(G)\cap H(G)$, there exists a $u$\,--\,$u$ anadiplosis semi-cycle:
$(u=v_0)e_1v_1e_2v_2\cdots v_{\ell-1}e_{\ell}(v_{\ell}=u)$ in $G$. Therefore 
$(u=v_0)\,\overline{e}_1v_1\overline{e}_2v_2\cdots v_{\ell-1}\overline{e}_{\ell}(v_{\ell}=u)$
is a $u$\,--\,$u$ walk in $\overline{G}$. Hence, $\overline{G}$ is connected.

($\Longleftarrow$) Assume $\overline{G}$ is connected. For any $u\neq v\in V(G)$, 
there is a $u$\,--\,$v$ walk: 
$(u=v_0)\,\overline{e}_1v_1\overline{e}_2v_2\cdots v_{\ell-1}\overline{e}_{\ell}(v_{\ell}=v)$
in $\overline{G}$. Obviously, $(u=v_0)e_1v_1e_2v_2\cdots v_{\ell-1}e_{\ell}(v_{\ell}=v)$ is 
an $u$\,--\,$v$ anadiplosis walk in $G$. For any $u\in T(G)\cap H(G)$, there is a $u$\,--\,$u$ walk:
$(u=v_0)\,\overline{e}_1v_1\overline{e}_2v_2\cdots v_{\ell-1}\overline{e}_{\ell}(v_{\ell}=v)$
in $\overline{G}$. Then $(u=v_0)e_1v_1e_2v_2\cdots v_{\ell-1}e_{\ell}(v_{\ell}=v)$ is 
a $u$\,--\,$u$ anadiplosis semi-cycle in $G$. Thus, $G$ is anadiplosis connected.
\end{proof}

\begin{lemma}\label{lem:weakly irreducible}
Let $G$ be an $(r,s)$-directed hypergraph. Then $\mathcal{A}(G)$ is weakly
irreducible if and only if $G$ is anadiplosis connected.
\end{lemma}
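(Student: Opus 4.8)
The plan is to reduce this statement to the previous lemma (Lemma \ref{lem:connected iff underlying}) by showing that the graph $G(\mathcal{A}(G))$ attached to the adjacency tensor of $G$ is, up to a harmless identification of vertices, essentially the same as $\overline{G}$, the underlying graph of the bipartite split $\mathcal{B}(G)$. Recall that $\mathcal{A}=\mathcal{A}(G)$ is a nonnegative $(r,s)$-th order $(m\times n)$-dimensional rectangular tensor, so $G(\mathcal{A})$ is a bipartite graph on vertex set $T(G)\,\dot\cup\,H(G)$ with an edge $(i_p,j_q)$ precisely when there is a nonzero entry $a_{i_1\cdots i_r j_1\cdots j_s}>0$ containing $i_p$ among its first $r$ indices and $j_q$ among its last $s$ indices. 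By \autoref{def:adjacency tensor}, such a nonzero entry exists exactly when there is an arc $e\in E(G)$ with $i_p\in T(e)$ and $j_q\in H(e)$. First I would observe that $G$ is anadiplosis connected iff $\overline{G}$ is connected by \autoref{lem:connected iff underlying}, so it suffices to prove that $G(\mathcal{A}(G))$ is connected iff $\overline{G}$ is connected.

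Next I would compare the two graphs directly. The vertex set of $\overline{G}$ is $V_T\,\dot\cup\,V_H$ (copies of $T(G)$ and $H(G)$), same as that of $G(\mathcal{A})$; and in $\overline{G}$ the arc $e$ contributes the edges $\overline{e}=\{i_1,\dots,i_r,j_1,\dots,j_s\}$ viewed inside the bipartite split, i.e. it contributes, for every $i_p\in T(e)$ and $j_q\in H(e)$, the edge $\{i_p,j_q\}$ (and, since $\mathcal{B}(G)$ is bipartite, no edges within $V_T$ or within $V_H$). That is exactly the set of edges that $e$ contributes to $G(\mathcal{A})$. Hence $G(\mathcal{A}(G))$ and $\overline{G}$ have the same vertex set and the same edge set, so one is connected iff the other is. Combining with \autoref{lem:connected iff underlying} gives: $\mathcal{A}(G)$ is weakly irreducible $\iff$ $G(\mathcal{A}(G))$ is connected $\iff$ $\overline{G}$ is connected $\iff$ $G$ is anadiplosis connected.

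I would write this out carefully in two directions to keep it transparent. For the forward direction, suppose $\mathcal{A}(G)$ is weakly irreducible, i.e. $G(\mathcal{A})$ is connected; given $u\neq v$ in $V(G)$, a path in $G(\mathcal{A})$ between the corresponding copies translates, edge by edge, into a walk in $\overline{G}$ (each edge of $G(\mathcal{A})$ is witnessed by some arc $e$, which supplies the corresponding edge of $\overline{e}$), and a closed walk through a vertex in $T(G)\cap H(G)$ is handled the same way; so $\overline{G}$ is connected and $G$ is anadiplosis connected by \autoref{lem:connected iff underlying}. The converse is identical with the roles reversed: a walk in $\overline{G}$ uses edges each coming from some $\overline{e}$, hence from an arc $e$ with the required tail/head membership, which is precisely a nonzero entry of $\mathcal{A}(G)$ joining the two endpoints in $G(\mathcal{A})$.

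The main obstacle is not conceptual depth but bookkeeping: one must be careful that the identification of the bipartition classes of $G(\mathcal{A})$ with $V_T$ and $V_H$ is consistent, and that the ``some $(r+s-2)$ indices'' clause in the definition of $G(\mathcal{A})$ really does match up with ``$i_p\in T(e)$ and $j_q\in H(e)$'' for an \emph{actual} arc $e$ rather than some spurious index pattern — this is where \autoref{def:adjacency tensor} (entries are $\frac{1}{r!s!}$ exactly on the index tuples realizing some arc, and $0$ elsewhere) is used essentially, and it is also where the partial symmetry of $\mathcal{A}(G)$ matters, since it guarantees every ordering of the tail indices and of the head indices is a nonzero entry. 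Once that matching is nailed down, the rest is immediate from \autoref{lem:connected iff underlying}.
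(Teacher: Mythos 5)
Your overall strategy --- reduce to \autoref{lem:connected iff underlying} by comparing the bipartite graph $G(\mathcal{A}(G))$ with $\overline{G}$ --- is the same as the paper's, and your description of $G(\mathcal{A}(G))$ (an edge $\{i_p,j_q\}$ exactly when some arc $e$ has $i_p\in T(e)$ and $j_q\in H(e)$) is correct. The gap is in the sentence ``Hence $G(\mathcal{A}(G))$ and $\overline{G}$ have the same vertex set and the same edge set.'' That claim is false: $\overline{G}$ is the underlying \emph{hypergraph} of $\mathcal{B}(G)$, so each arc $e$ contributes a single hyperedge $\overline{e}$ of size $r+s$ containing all $r$ tail-copies and all $s$ head-copies, not the collection of $rs$ cross pairs $\{i_p,j_q\}$. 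Consequently a walk in $\overline{G}$, in the sense used in \autoref{lem:connected iff underlying}, may step from $v_{i-1}$ to $v_i$ with both vertices in $T(e_i)$, or both in $H(e_i)$; such a step has no counterpart edge in the bipartite graph $G(\mathcal{A})$. Your parenthetical ``no edges within $V_T$ or within $V_H$'' conflates the bipartition of the vertex set of $\mathcal{B}(G)$ with the absence of same-side adjacency inside a hyperedge, and the latter simply fails whenever $r\geq 2$ or $s\geq 2$.

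The conclusion that $\overline{G}$ is connected if and only if $G(\mathcal{A})$ is connected is still true, but the direction ``$\overline{G}$ connected implies $G(\mathcal{A})$ connected'' needs an argument your write-up does not supply (and whose necessity it implicitly denies): whenever a step of the $\overline{G}$-walk has $\{v_{i-1},v_i\}\subseteq T(e_i)$ (or $\subseteq H(e_i)$), pick any $w\in H(e_i)$ (resp.\ $w\in T(e_i)$), which exists since $r,s\geq 1$, and replace the step by the two steps $v_{i-1}\,\overline{e}_i\,w\,\overline{e}_i\,v_i$; each new step crosses the bipartition and hence corresponds to an edge of $G(\mathcal{A})$. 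This detour is exactly the nontrivial point of the paper's proof of the ``$\Longleftarrow$'' direction. The opposite direction is fine as you state it: every edge of $G(\mathcal{A})$ is witnessed by an arc $e$, hence its two endpoints lie in the common hyperedge $\overline{e}$ of $\overline{G}$. With the detour inserted, your proof closes and coincides with the paper's.
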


\begin{proof}
($\Longrightarrow$) Assume $\mathcal{A}(G)$ is weakly irreducible. Then its associated 
bipartite graph $G(\mathcal{A})$ is connected. For any $u$, $v\in V(\overline{G})$, since 
$G(\mathcal{A})$ is connected, there is a $u$\,--\,$v$ walk: 
$(u=v_0)\,e_1v_1\cdots v_{\ell-1}e_{\ell}(v_{\ell}=v)$ in $G(\mathcal{A})$. By the 
definition of $G(\mathcal{A})$, there must be $f_i\in E(\overline{G})$ such that 
$\{v_{i-1},v_i\}\subseteq f_i$, $i\in[\ell]$. Therefore 
$(u=v_0)\,f_1v_1\cdots v_{\ell-1}f_{\ell}(v_{\ell}=v)$ is a $u$\,--\,$v$ walk in 
$\overline{G}$. By \autoref{lem:connected iff underlying}, $G$ is anadiplosis connected.

($\Longleftarrow$) Assume $G$ is anadiplosis connected, according to 
\autoref{lem:connected iff underlying}, $\overline{G}$ is connected. 
For any vertices $u$, $v\in G(\mathcal{A})$, since $\overline{G}$ 
is connected, there is a $u$\,--\,$v$ walk: 
$(u=v_0)\,\overline{e}_1v_1\cdots v_{\ell-1}\overline{e}_{\ell}$
$(v_{\ell}=v)$ in $\overline{G}$. If there exists $i_0\in[\ell]$ such that
$\{v_{i_0-1},v_{i_0}\}\subseteq T(e_{i_0})$ (or $\{v_{i_0-1},v_{i_0}\}\subseteq H(e_{i_0})$), 
we pick any vertex $w\in H(e_{i_0})$ (or $w\in T(e_{i_0})$). Then 
\[
(u=v_0)\,\overline{e}_1v_1\cdots v_{i_0-1}\overline{e}_{i_0}w\overline{e}_{i_0}v_{i_0}
\cdots v_{\ell-1}\overline{e}_{\ell}\,(v_{\ell}=v)
\]
is also a $u$\,--\,$v$ walk in $\overline{G}$. Therefore we may assume 
$(u=v_0)\,\overline{e}_1v_1\cdots v_{\ell-1}\overline{e}_{\ell}(v_{\ell}=v)$ 
is a $u$\,--\,$v$ walk in $\overline{G}$ such that $\{v_{i-1},v_i\}\nsubseteq T(e_i)$
and $\{v_{i-1},v_i\}\nsubseteq H(e_i)$ for any $i\in[\ell]$. By the definition
of $G(\mathcal{A})$, $u=v_0\,,v_1,\ldots,v_{\ell-1},v_{\ell}=v$ is a $u$\,--\,$v$
walk in $G(\mathcal{A})$, i.e., $G(\mathcal{A})$ is connected, which implies 
$\mathcal{A}(G)$ is weakly irreducible.
\end{proof}

The following lemma establish an relation of the $(p,q)$-spectral radius between 
$G$ and its anadiplosis components.

\begin{lemma}\label{lem:spectral components}
Let $G$ be an $(r,s)$-directed hypergraph, $G_i$ be the anadiplosis components of 
$G$, $i=1$, $2$,\,$\ldots$, $k$. If $r/p+s/q\geq 1$, then
\[
\lambda_{p,q}(G)=\max_{1\leq i\leq k}\{\lambda_{p,q}(G_i)\}.    
\]
\end{lemma}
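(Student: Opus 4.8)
The plan is to establish the two inequalities $\lambda_{p,q}(G)\geq\max_i\lambda_{p,q}(G_i)$ and $\lambda_{p,q}(G)\leq\max_i\lambda_{p,q}(G_i)$ separately. The first inequality is immediate: each $G_i$ is a directed subhypergraph of $G$, so \autoref{prop:H<G} gives $\lambda_{p,q}(G_i)\leq\lambda_{p,q}(G)$ for every $i$, and taking the maximum over $i$ yields the bound. (One should note that although the $G_i$ may not literally be $(r,s)$-directed hypergraphs as a whole — they have fewer vertices — the polynomial form and the definition of $\lambda_{p,q}$ extend verbatim, and arc-monotonicity still applies; this is a triviality worth one sentence.)

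The substance is the reverse inequality. The key observation is that the arc set of $G$ partitions into the arc sets $E(G_1),\ldots,E(G_k)$ of its anadiplosis components, and crucially the vertex sets $T(G_i)$ (within $T(G)$) are pairwise disjoint, and likewise the $H(G_i)$ are pairwise disjoint — this follows because anadiplosis components are the connected components of $\overline{G}$ by \autoref{lem:connected iff underlying}, and a vertex of $T(G)$ lying in two different components' tails would connect those components in $\overline{G}$. Take a nonnegative eigenpair $(\bm x,\bm y)$ for $\lambda_{p,q}(G)$ with $\|\bm x\|_p=\|\bm y\|_q=1$. Split $P_G(\bm x,\bm y)=\sum_{i=1}^k P_{G_i}(\bm x|_{T(G_i)},\bm y|_{H(G_i)})$. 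Setting $a_i:=\|\bm x|_{T(G_i)}\|_p$ and $b_i:=\|\bm y|_{H(G_i)}\|_q$, disjointness gives $\sum_i a_i^p=1$ and $\sum_i b_i^q=1$. By \eqref{eq:Equi-max} each summand is at most $\lambda_{p,q}(G_i)\,a_i^{\,r}b_i^{\,s}$, so
\[
\lambda_{p,q}(G)\leq\sum_{i=1}^k\lambda_{p,q}(G_i)\,a_i^{\,r}b_i^{\,s}
\leq\Big(\max_i\lambda_{p,q}(G_i)\Big)\sum_{i=1}^k a_i^{\,r}b_i^{\,s}.
\]
It remains to show $\sum_i a_i^{\,r}b_i^{\,s}\leq 1$ under the constraints $\sum_i a_i^p=1$, $\sum_i b_i^q=1$, and here is exactly where the hypothesis $r/p+s/q\geq 1$ enters: apply the generalized H\"older inequality \eqref{eq:Gener Holder} (when $r/p+s/q=1$) or its $>1$ counterpart \eqref{eq:GeqHolder} (when $r/p+s/q>1$) with the weighted factorization $a_i^{\,r}b_i^{\,s}=(a_i^p)^{r/p}(b_i^q)^{s/q}$, treating the two "columns" $a_i^p$ and $b_i^q$ with exponents $p/r$ and $q/s$; since $r/p+s/q\geq 1$ the reciprocals satisfy $\sum 1/\alpha_j=r/p+s/q\geq 1$, which is precisely the hypothesis needed, and the right-hand side becomes $(\sum_i a_i^p)^{r/p}(\sum_i b_i^q)^{s/q}=1$.

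The main obstacle — really the only non-routine point — is verifying the disjointness of the $T(G_i)$ and of the $H(G_i)$ cleanly, and making sure the restriction-and-split step is valid even when some component contributes a zero subvector (in which case its term drops out harmlessly). Once disjointness is in hand, the rest is bookkeeping plus one application of the appropriate H\"older-type inequality from the preliminaries. I would present the forward inequality in one line, then devote the bulk of the proof to the decomposition and the H\"older estimate, explicitly flagging where $r/p+s/q\geq 1$ is used so the reader sees the hypothesis is tight for this argument.
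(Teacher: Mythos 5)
Your proof is correct, and for the nontrivial direction it takes a genuinely different route from the paper. The paper also splits into an easy lower bound (via zero-extension of a component eigenpair, equivalent to your use of \autoref{prop:H<G}) and a hard upper bound, but its upper-bound argument goes through the weak eigenequations \eqref{eq:weakcharacteristic equation}: summing them over $T(G_i)$ and $H(G_i)$ yields $\|\bm{x}|_{T(G_i)}\|_p^p=\|\bm{y}|_{H(G_i)}\|_q^q$ for every component, after which the paper picks a \emph{single} component $G_j$ with nonzero restriction and computes $\lambda_{p,q}(G_j)\geq \lambda_{p,q}(G)/\|\bm{x}|_{T(G_j)}\|_p^{p(r/p+s/q-1)}\geq\lambda_{p,q}(G)$, the hypothesis $r/p+s/q\geq 1$ entering through the sign of the exponent. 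You instead bound the whole sum $\sum_i P_{G_i}$ by $\max_i\lambda_{p,q}(G_i)\cdot\sum_i a_i^r b_i^s$ and control $\sum_i a_i^r b_i^s$ with \eqref{eq:Gener Holder}/\eqref{eq:GeqHolder}, the hypothesis entering as $\sum_j 1/\alpha_j=r/p+s/q\geq 1$. Your version avoids the Lagrange/eigenequation machinery entirely (only the variational characterization \eqref{eq:Equi-max} is used) and has the aesthetic advantage of being the exact mirror of the paper's own proof of the elliptic case \autoref{lem:r/p+s/q<1-components}, so the two phases get a unified treatment; the paper's version, in exchange, delivers the extra structural fact $\|\bm{x}|_{T(G_i)}\|_p^p=\|\bm{y}|_{H(G_i)}\|_q^q$ for an eigenpair, which is of independent use. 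The points you flag as needing care (pairwise disjointness of the $T(G_i)$ and of the $H(G_i)$, and dropping components with zero restriction) are exactly the right ones, and both are handled as you indicate via \autoref{lem:connected iff underlying} and the vanishing of $P_{G_i}$ when $a_i b_i=0$.
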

    
\begin{proof}
For any $i\in[k]$, let $(\bm{x}^{(i)},\bm{y}^{(i)})$ be an eigenpair 
corresponding to $\lambda_{p,q}(G_i)$. That is 
$\lambda_{p,q}(G_i)=P_{G_i}(\bm{x}^{(i)}, \bm{y}^{(i)})$. 
Now for any $u\in T(G)$, $v\in H(G)$, we construct two vectors 
$\bm{x}\in\mathbb{S}^{m-1}_p$, $\bm{y}\in\mathbb{S}^{n-1}_q$ as follows:
\begin{align*}
x_u & =\begin{cases}
    \big(\bm{x}^{(i)}\big)_u, & \text{if}\ u\in T(G_i),\\
    0, & \text{otherwise},
    \end{cases}\\    
y_v & =\begin{cases}
    \big(\bm{y}^{(i)}\big)_v, & \text{if}\ v\in H(G_i),\\
    0, & \text{otherwise}.
    \end{cases}   
\end{align*}
Therefore $\lambda_{p,q}(G)\geq P_{G}(\bm{x},\bm{y})=P_{G_i}(\bm{x}^{(i)},\bm{y}^{(i)})
=\lambda_{p,q}(G_i)$, which yields 
\[
\lambda_{p,q}(G)\geq\max_{1\leq i\leq k}\{\lambda_{p,q}(G_i)\}.
\]
    
On the other hand, let $(\bm{x},\bm{y})$ be an eigenpair corresponding to 
$\lambda_{p,q}(G)$. For any $u_i\in T(G_i)$, $v_i\in H(G_i)$, by the weak 
eigenequations \eqref{eq:weakcharacteristic equation} we have
\[
\begin{dcases}
    \sum_{e\in E(G),\,u_i\in T(e)}\Bigg(\prod_{u\in T(e)}x_u\Bigg)
    \Bigg(\prod_{v\in H(e)}y_v\Bigg)=r\lambda_{p,q}(G)x_{u_i}^p,\\[2mm]
    \sum_{e\in E(G),\,v_i\in H(e)}\Bigg(\prod_{u\in T(e)}x_u\Bigg)
    \Bigg(\prod_{v\in H(e)}y_v\Bigg)=s\lambda_{p,q}(G)y_{v_i}^q.
\end{dcases}
\]
According to \autoref{lem:connected iff underlying}, 
$\{e: e\in E(G), u_i\in T(e)\}=\{e: e\in E(G_i), u_i\in T(e)\}$. Also, we have 
$\{e: e\in E(G), v_i\in H(e)\}=\{e: e\in E(G_i), v_i\in H(e)\}$. Therefore
\begin{equation}\label{eq:characteristic equation for Gi}
    \begin{dcases}
        \sum_{e\in E(G_i),\,u_i\in T(e)}\Bigg(\prod_{u\in T(e)}x_u\Bigg)
        \Bigg(\prod_{v\in H(e)}y_v\Bigg)=r\lambda_{p,q}(G)x_{u_i}^p,\\[2mm]
        \sum_{e\in E(G_i),\,v_i\in H(e)}\Bigg(\prod_{u\in T(e)}x_u\Bigg)
        \Bigg(\prod_{v\in H(e)}y_v\Bigg)=s\lambda_{p,q}(G)y_{v_i}^q.
    \end{dcases}
\end{equation}
Summing both sides on $u_i\in T(G_i)$ and $v_i\in H(G_i)$, respectively, we obtain
\begin{equation}\label{eq:sum}
    \begin{dcases}
        \sum_{e\in E(G_i)}\Bigg(\prod_{u\in T(e)}x_u\Bigg)
        \Bigg(\prod_{v\in H(e)}y_v\Bigg)=\lambda_{p,q}(G)||\bm{x}|_{T(G_i)}||_p^p,\\[2mm]
        \sum_{e\in E(G_i)}\Bigg(\prod_{u\in T(e)}x_u\Bigg)
        \Bigg(\prod_{v\in H(e)}y_v\Bigg)=\lambda_{p,q}(G)||\bm{y}|_{H(G_i)}||_q^q.
    \end{dcases}
\end{equation}
Hence, $||\bm{x}|_{T(G_i)}||_p^p=||\bm{y}|_{H(G_i)}||_q^q$, $i\in [k]$. Now we choose an 
anadiplosis component $G_j$ such that $\bm{x}|_{T(G_j)}\neq 0$ and $\bm{y}|_{H(G_j)}\neq 0$.
It follows from \eqref{eq:Equi-max} and \eqref{eq:sum} that 
\begin{align*}
\lambda_{p,q}(G_j) & \geq
\frac{P_{G_j}\big(\bm{x}|_{T(G_j)},\bm{y}|_{H(G_j)}\big)}%
{||\bm{x}|_{T(G_j)}||_p^r\cdot||\bm{y}|_{H(G_j)}||_q^s}\\[2mm]  
& =\frac{\sum_{e\in E(G_j)}\left(\prod_{u\in T(e)}x_u\right)\left(\prod_{v\in H(e)}y_v\right)}%
{||\bm{x}|_{T(G_j)}||_p^r\cdot||\bm{y}|_{H(G_j)}||_q^s}\\[2mm]
& =\frac{\lambda_{p,q}(G)}{||\bm{x}|_{T(G_j)}||_p^{p(r/p+s/q-1)}}
\geq\lambda_{p,q}(G).
\end{align*}
Therefore $\max_{1\leq i\leq t}\lambda_{p,q}(G_i)\geq\lambda_{p,q}(G)$,
completing the proof.
\end{proof}
    
However, if $r/p+s/q<1$, we get a different statement as follows.

\begin{lemma}\label{lem:r/p+s/q<1-components}
Let $G$ be an $(r,s)$-directed hypergraph, $G_i$ be the anadiplosis components of 
$G$, $i=1$, $2$,\,$\ldots$, $k$. If $r/p+s/q<1$, then
\[
\lambda_{p,q}(G)=
\Bigg(\sum_{i=1}^k\big(\lambda_{p,q}(G_i)\big)^{1/(1-(r/p+s/q))}\Bigg)^{1-(r/p+s/q)}.    
\]
\end{lemma}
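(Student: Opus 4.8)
The plan is to establish the identity by two inequalities, exploiting the fact that when $r/p+s/q<1$ one should \emph{not} concentrate the eigenvector mass on a single component, but rather distribute it optimally across components. Write $t:=r/p+s/q<1$ and $\theta:=1/(1-t)>1$, so the claimed value is $\Lambda:=\bigl(\sum_{i=1}^k\lambda_{p,q}(G_i)^{\theta}\bigr)^{1/\theta}$.

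For the lower bound $\lambda_{p,q}(G)\ge\Lambda$: for each $i$ let $(\bm{x}^{(i)},\bm{y}^{(i)})$ be a nonnegative eigenpair for $\lambda_{p,q}(G_i)$, and form a \emph{scaled and merged} pair by setting $\bm{x}=\sum_i c_i\,\bm{x}^{(i)}$ on the disjoint blocks $T(G_i)$ (extended by zero) and similarly $\bm{y}=\sum_i d_i\,\bm{y}^{(i)}$ on $H(G_i)$, where $c_i,d_i\ge 0$ are free scalars to be chosen. Because the components are vertex-disjoint in both tail and head (by \autoref{lem:connected iff underlying}, as used in the proof of \autoref{lem:spectral components}), no arc of $G$ mixes two components, so $P_G(\bm{x},\bm{y})=\sum_i c_i^{\,r}d_i^{\,s}\,\lambda_{p,q}(G_i)$, while $\|\bm{x}\|_p^p=\sum_i c_i^{\,p}$ and $\|\bm{y}\|_q^q=\sum_i d_i^{\,q}$. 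By the homogeneity in \eqref{eq:Equi-max} I may first optimize over the ratio $c_i/d_i$ for each $i$ separately — the optimal choice makes $c_i^{\,p}$ and $d_i^{\,q}$ proportional, reducing the problem to a single scalar $w_i\ge 0$ per component with contribution proportional to $w_i^{\,t}\lambda_{p,q}(G_i)$ subject to $\sum_i w_i=1$. Maximizing $\sum_i w_i^{\,t}\lambda_{p,q}(G_i)$ over the simplex is a routine Lagrange computation (it is concave since $0<t<1$) and yields exactly $\Lambda$; plugging this back through \eqref{eq:Equi-max} gives $\lambda_{p,q}(G)\ge\Lambda$.

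For the upper bound $\lambda_{p,q}(G)\le\Lambda$: take a nonnegative eigenpair $(\bm{x},\bm{y})$ for $\lambda_{p,q}(G)$ and set $a_i:=\|\bm{x}|_{T(G_i)}\|_p$, $b_i:=\|\bm{y}|_{H(G_i)}\|_q$. Exactly as in the proof of \autoref{lem:spectral components} — summing the weak eigenequations \eqref{eq:weakcharacteristic equation} over the vertices of $G_i$ and using that arcs do not cross components — one gets $\sum_{e\in E(G_i)}(\prod_{u\in T(e)}x_u)(\prod_{v\in H(e)}y_v)=\lambda_{p,q}(G)\,a_i^{\,p}=\lambda_{p,q}(G)\,b_i^{\,q}$. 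On the other hand, by \eqref{eq:Equi-max} applied to $G_i$, this same arc-sum is $\le\lambda_{p,q}(G_i)\,a_i^{\,r}b_i^{\,s}$. Combining, $\lambda_{p,q}(G)\,a_i^{\,p}\le\lambda_{p,q}(G_i)\,a_i^{\,r}b_i^{\,s}$ for every $i$ with $a_i,b_i>0$ (and the inequality is trivial otherwise), and since $a_i^{\,p}=b_i^{\,q}$ one solves to get $\lambda_{p,q}(G)\le\lambda_{p,q}(G_i)\,a_i^{\,-p(1-t)}$, i.e. $a_i^{\,p}\le\bigl(\lambda_{p,q}(G_i)/\lambda_{p,q}(G)\bigr)^{\theta}$. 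Summing over $i$ and using $\sum_i a_i^{\,p}=\|\bm{x}\|_p^p=1$ gives $1\le\lambda_{p,q}(G)^{-\theta}\sum_i\lambda_{p,q}(G_i)^{\theta}$, which rearranges to $\lambda_{p,q}(G)\le\Lambda$. Together the two bounds prove the lemma.

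The main obstacle I anticipate is bookkeeping in the upper bound around \emph{degenerate components} — those $i$ where $a_i=0$ or $b_i=0$ (equivalently where the eigenvector vanishes on $G_i$): one must check that from $a_i^{\,p}=b_i^{\,q}$ these two conditions coincide, that such components simply contribute $0$ to $\sum_i a_i^{\,p}$, and that dropping them does not break the final summation identity; and symmetrically, in the lower bound, one should note that $\lambda_{p,q}(G_i)>0$ for every genuine component (each contains at least one arc), so the simplex optimization is nondegenerate. A secondary subtlety is making the per-component rescaling step in the lower bound precise: one wants to invoke \eqref{eq:Equi-max} for $G$ once, on the single merged pair with the jointly optimal scalars, rather than optimizing inside the max — this is just a matter of phrasing but worth stating carefully so the argument reads cleanly.
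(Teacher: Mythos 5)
Your proposal is correct. The lower bound is essentially the paper's own argument: the authors simply exhibit the optimal weights $a_i=\lambda_{p,q}(G_i)^{\theta}/\sum_j\lambda_{p,q}(G_j)^{\theta}$, scale the component eigenpairs by $a_i^{1/p}$ and $a_i^{1/q}$, and evaluate — which is exactly the outcome of the Lagrange computation you describe. The upper bound, however, takes a genuinely different route. The paper avoids the eigenequations entirely: it writes $\lambda_{p,q}(G)=\sum_iP_{G_i}(\bm{x}^{(i)},\bm{y}^{(i)})\le\sum_i\lambda_{p,q}(G_i)\,\|\bm{x}^{(i)}\|_p^r\|\bm{y}^{(i)}\|_q^s$ and applies the generalized H\"older inequality \eqref{eq:Gener Holder} with the three exponents $\alpha_1=\theta$, $\alpha_2=p/r$, $\alpha_3=q/s$, which is two lines and sidesteps all degeneracy bookkeeping. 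Your version instead sums the weak eigenequations over each component to get $\lambda_{p,q}(G)\,a_i^p=\lambda_{p,q}(G)\,b_i^q\le\lambda_{p,q}(G_i)\,a_i^rb_i^s$ and derives the pointwise bound $a_i^p\le(\lambda_{p,q}(G_i)/\lambda_{p,q}(G))^{\theta}$ before summing; this is precisely the mechanism the paper uses for the \emph{parabolic/hyperbolic} case in \autoref{lem:spectral components}, so your proof has the virtue of treating both phases uniformly and of revealing how the extremal eigenvector's mass $a_i^p$ distributes across components, at the cost of the extra care you already flagged for components where the eigenvector vanishes. Both arguments are sound; the H\"older route is shorter, yours is more informative.
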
  

\begin{proof}
Let $(\bm{x},\bm{y})\in\mathbb{S}^{m-1}_{p,+}\times\mathbb{S}^{n-1}_{q,+}$ be an 
eigenpair to $\lambda_{p,q}(G)$, and let $\bm{x}^{(i)}$, $\bm{y}^{(i)}$ be 
the restriction of $\bm{x}$, $\bm{y}$ to $T(G_i)$, $H(G_i)$, respectively.  
In the light of \eqref{eq:Equi-max},
\begin{align*}
\lambda_{p,q}(G)=P_G(\bm{x},\bm{y})=\sum_{i=1}^kP_{G_i}(\bm{x}^{(i)},\bm{y}^{i})
\leq\sum_{i=1}^k\lambda_{p,q}(G_i)||\bm{x}^{(i)}||_p^r\cdot ||\bm{y}^{(i)}||_q^s.
\end{align*}
Let $\alpha_1=1/(1-(r/p+s/q))$, $\alpha_2=p/r$, $\alpha_3=q/s$, we have
$1/\alpha_1+1/\alpha_2+1/\alpha_3=1$, and applying Generalized H\"older's inequality 
\eqref{eq:Gener Holder}, we obtain
\begin{align*}
\lambda_{p,q}(G) 
& \leq\Bigg(\sum_{i=1}^k\big(\lambda_{p,q}(G_i)\big)^{\alpha_1}\Bigg)^{1/\alpha_1}
  \Bigg(\sum_{i=1}^k\big(||\bm{x}^{(i)}||_p^r\big)^{\alpha_2}\Bigg)^{1/\alpha_2}
  \Bigg(\sum_{i=1}^k\big(||\bm{y}^{(i)}||_q^s\big)^{\alpha_3}\Bigg)^{1/\alpha_3}\\
& =\Bigg(\sum_{i=1}^k\big(\lambda_{p,q}(G_i)\big)^{\alpha_1}\Bigg)^{1/\alpha_1}
  \Bigg(\sum_{i=1}^k ||\bm{x}^{(i)}||_p^p\Bigg)^{r/p}
  \Bigg(\sum_{i=1}^k ||\bm{y}^{(i)}||_q^q\Bigg)^{s/q}\\
& =\Bigg(\sum_{i=1}^k\big(\lambda_{p,q}(G_i)\big)^{1/(1-(r/p+s/q))}\Bigg)^{1-(r/p+s/q)}.
\end{align*}

On the other hand, let $(\bm{x}^{(i)},\bm{y}^{(i)})$ be the eigenpair to $\lambda_{p,q}(G_i)$,
that is $\lambda_{p,q}(G_i)=P_{G_i}(\bm{x}^{(i)},\bm{y}^{(i)})$,~$i=1$, $2$,\,$\ldots$, $k$.
For simplicity, denote
\[
a_i=\frac{\big(\lambda_{p,q}(G_i)\big)^{\alpha_1}}{\sum_{i=1}^k\big(\lambda_{p,q}(G_i)\big)^{\alpha_1}},~
i=1,2,\ldots,k.   
\] 
Furthermore, we let
\begin{align*}
\bm{x} & =\big(a_1^{1/p}\bm{x}^{(1)},a_2^{1/p}\bm{x}^{(2)},\ldots,a_k^{1/p}\bm{x}^{(k)}\big)^{\mathrm{T}},\\
\bm{y} & =\big(a_1^{1/q}\bm{y}^{(1)},a_2^{1/q}\bm{y}^{(2)},\ldots,a_k^{1/q}\bm{y}^{(k)}\big)^{\mathrm{T}}.
\end{align*}
Clearly, $||\bm{x}||_p^p=||\bm{y}||_q^q=1$. By \eqref{eq:(p,q)-spectral radius} we see that
\begin{align*}
\lambda_{p,q}(G)\geq P_G(\bm{x},\bm{y}) 
& =\sum_{i=1}^kP_{G_i}\big(a_i^{1/p}\bm{x}^{(i)},a_i^{1/q}\bm{y}^{(i)}\big)\\
& =\sum_{i=1}^ka_i^{r/p+s/q}\cdot P_{G_i}\big(\bm{x}^{(i)},\bm{y}^{(i)}\big)\\
& =\sum_{i=1}^ka_i^{r/p+s/q}\cdot\lambda_{p,q}(G_i)\\
& =\Bigg(\sum_{i=1}^k\big(\lambda_{p,q}(G_i)\big)^{1/(1-(r/p+s/q))}\Bigg)^{1-(r/p+s/q)}.
\end{align*}  
The proof is completed.  
\end{proof}

We call the value $e:=\frac{r}{p}+\frac{s}{q}$ is the {\em eccentricity} of 
$\lambda_{p,q}(G)$, and refer $e<1$ as the elliptical phase, $e=1$ as the 
parabolic phase, and $e>1$ as the hyperbolic phase. The value $\lambda_{p,q}(G)$ 
behaves very different in three phases.

\section{The $\alpha$-normal labeling methods for  $(r,s)$-directed hypergraphs}
\label{sec4}
We begin this section with the following concept, which will be used frequently 
in the sequel. 

\begin{definition}
A {\em weighted incidence matrix} $B=(B(v,e))$ of a (directed) hypergraph $G$ 
is a $|V|\times |E|$ matrix such that for any $v\in V(G)$ and any $e\in E(G)$, 
the entry $B(v,e)>0$ if $v\in e$ and $B(v,e)=0$ if $v\notin e$.
\end{definition}

In \cite{LuMan2016:Small Spectral Radius}, Lu and Man discovered
the $\alpha$-normal labeling method for computing the spectral radii 
of uniform hypergraphs as follows.

\begin{theorem}[\cite{LuMan2016:Small Spectral Radius}]
\label{thm:uniform hypergraph normal label}
Let $H$ be a connected $k$-uniform hypergraph. Then the spectral radius of $H$ is
$\rho(H)$ if and only if there is a weighted incidence matrix $B=(B(v,e))$ satisfying
\begin{enumerate}
\item[$(1)$] $\sum_{e:\,v\in e}B(v,e)=1$, for any $v\in V(H)$;
\item[$(2)$] $\prod_{v\in e}B(v,e)=\alpha=(\rho(H))^{-k}$, for any $e\in E(H)$;
\item[$(3)$] $\prod_{i=1}^{\ell}\frac{B(v_{i-1},e_i)}{B(v_i,e_i)}=1$, for any cycle
$v_0e_1v_1e_2\cdots v_{\ell-1}e_{\ell}v_{\ell}\,(v_{\ell}=v_0)$.
\end{enumerate}
\end{theorem}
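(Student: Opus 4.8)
The plan is to prove both implications by translating between a positive Perron eigenvector of the adjacency tensor $\mathcal{A}(H)$ and the weighted incidence matrix $B$, using the H-eigenvalue form of the eigenequation, namely $\sum_{e:\,v\in e}\prod_{u\in e,\,u\neq v}x_u=\rho\,x_v^{k-1}$ for every $v\in V(H)$. The governing fact is that, since $H$ is connected, $\mathcal{A}(H)$ is weakly irreducible, so the Perron–Frobenius theorem for nonnegative tensors applies: $\rho(H)$ is attained by a \emph{positive} eigenvector $\bm{x}$, unique up to scaling, and $\rho(H)$ is the only eigenvalue admitting a positive eigenvector. This last uniqueness is what will let me identify $\alpha^{-1/k}$ with $\rho(H)$ at the end of the converse.

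\textbf{Forward direction.} Given a positive eigenvector $\bm{x}$ for $\rho:=\rho(H)$, I would set $B(v,e):=\frac{1}{\rho\,x_v^k}\prod_{u\in e}x_u$ for $v\in e$ and $B(v,e)=0$ otherwise, and then verify the three conditions by direct computation. Condition (1) follows by factoring $x_v$ out of $\prod_{u\in e}x_u$ and invoking the eigenequation; condition (2) is a telescoping product that collapses to $\prod_{v\in e}B(v,e)=\rho^{-k}$; and condition (3) reduces, via $\frac{B(v_{i-1},e_i)}{B(v_i,e_i)}=\frac{x_{v_i}^k}{x_{v_{i-1}}^k}$, to a telescoping product equal to $x_{v_\ell}^k/x_{v_0}^k=1$ because $v_\ell=v_0$. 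These three checks are routine.

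\textbf{Reverse direction (the substantive half).} Given $B$ satisfying (1)--(3), I would reconstruct a positive eigenvector. The key observation is that a matrix of the form $B(v,e)=\beta_e/x_v^k$ has $\log B(v,e)+k\log x_v$ constant along each edge $e$; so I treat $t_v:=\log x_v$ as an unknown vertex potential and seek $t$ making $\log B(u,e)+k\,t_u$ independent of $u\in e$. Equivalently, the increments $t_{v_i}-t_{v_{i-1}}=\tfrac1k\log\frac{B(v_{i-1},e_i)}{B(v_i,e_i)}$ must assemble into a well-defined potential, and condition (3) says precisely that these increments sum to zero around every cycle. Together with connectedness this yields path-independence, hence a consistent $t$, equivalently a positive $\bm{x}$ with $B(v,e)\,x_v^k=\beta_e$ constant on each $e$. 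Condition (2) then pins down $\beta_e=\alpha^{1/k}\prod_{u\in e}x_u$, so $B(v,e)=\alpha^{1/k}\big(\prod_{u\in e}x_u\big)/x_v^k$; substituting this into condition (1) and cancelling one factor $x_v$ produces $\sum_{e:\,v\in e}\prod_{u\in e,\,u\neq v}x_u=\alpha^{-1/k}x_v^{k-1}$, the eigenequation with eigenvalue $\alpha^{-1/k}$. Since $\bm{x}>0$, Perron–Frobenius forces $\alpha^{-1/k}=\rho(H)$, that is $\alpha=(\rho(H))^{-k}$.

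\textbf{Main obstacle.} The delicate step is the potential reconstruction in the converse: showing that condition (3), stated only for cycles with distinct edges, suffices to make $t_v$ well defined on all of $V(H)$. I would fix a base vertex, define $t_v$ as the accumulated increment along an arbitrary path to $v$, and argue that the difference of two such paths is a closed walk whose increment-sum vanishes, because every closed walk decomposes into cycles and each cycle contributes $0$ by (3). This is the discrete ``closed form implies exact'' argument, and it is the only place where both connectedness and condition (3) are genuinely used.
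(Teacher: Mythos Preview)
The paper does not prove this theorem; it is quoted from \cite{LuMan2016:Small Spectral Radius} as background. The natural point of comparison is the paper's own proof of the directed-hypergraph analogue, Lemma~4.1.

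Your forward direction is identical to what the paper does there: define $B(v,e)$ from a positive Perron eigenvector and check the three conditions by direct computation.

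For the converse, your reconstruction of a positive vector $\bm{x}$ via the potential argument (fix a base vertex, accumulate increments $\tfrac{1}{k}\log\frac{B(v_{i-1},e_i)}{B(v_i,e_i)}$ along paths, use condition~(3) and connectedness for well-definedness) is exactly the paper's construction of $\bm{x}^*$ in the sufficiency half of Lemma~4.1. The two arguments diverge only in the final step. You substitute $\bm{x}$ into condition~(1), read off the eigenequation with eigenvalue $\alpha^{-1/k}$, and invoke the uniqueness part of Perron--Frobenius (a positive eigenvector must belong to the spectral radius) to force $\alpha^{-1/k}=\rho(H)$. The paper instead first establishes the upper bound $P_H(\bm{x})\le\alpha^{-1/k}$ for \emph{all} unit vectors via H\"older and AM--GM, and then observes that the constructed $\bm{x}^*$ turns every inequality in that chain into equality; this gives $\rho(H)=\alpha^{-1/k}$ straight from the variational definition of $\rho$, without appealing to Perron--Frobenius uniqueness. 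Your route is a little shorter but leans on a slightly stronger external fact; the paper's route is more self-contained and, as a bonus, yields the subnormal upper bound (Lemma~4.2) for free from the same inequality chain. Both arguments are correct.
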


In our previous paper \cite{LiuLu2018:p_spec}, we generalized 
the $\alpha$-normal labeling method for computing the $p$-spectral radii of
$k$-uniform hypergraphs and found a number of applications for $p>k$. 

\begin{theorem}[\cite{LiuLu2018:p_spec}]\label{thm:p_spec}
Let $H$ be a $k$-uniform hypergraph and $p>k$. Then the $p$-spectral radius of $H$ is
$\lambda^{(p)}(H)$ if and only if there exist a weighted incidence matrix 
$B=(B(v,e))$ and edge weights $\{w(e)\}$ satisfying
\begin{enumerate}
\item[$(1)$] $\sum_{e\in E(G)}w(e)=1$; 

\item[$(2)$] $\sum_{e:\,v\in e}B(v,e)=1$, for any $v\in V(H)$; 
 
\item[$(3)$] $w(e)^{p-k}\cdot\prod_{v\in e}B(v,e)=\alpha=k^{p-k}/(\lambda^{(p)}(H))^p$, 
for any $e\in E(H)$;

\item[$(4)$] For any $v\in V(H)$ and $v\in e_i$, $i=1,2,\ldots,d$,
\[
\frac{w(e_1)}{B(v,e_1)}=\frac{w(e_2)}{B(v,e_2)}=\cdots=\frac{w(e_d)}{B(v,e_d)}.
\]
\end{enumerate}
\end{theorem}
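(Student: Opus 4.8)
The plan is to mirror the proof of \autoref{thm:uniform hypergraph normal label} given by Lu and Man, but working with the two-sided Lagrange/eigenequation structure \eqref{eq:strongcharacteristic equation} and treating the exponent $p-k$ via an auxiliary family of edge weights $w(e)$, exactly as in our earlier paper \cite{LiuLu2018:p_spec}. First I would prove the ``only if'' direction. Suppose $\lambda^{(p)}(H)$ is the $p$-spectral radius; since $p>k$ the polynomial form is a genuine $\ell_p$-constrained maximum, so there is a nonnegative eigenvector $\x$ attaining it, and (by the weak-to-strong argument and connectedness, in the style of the paragraph preceding \eqref{eq:weakcharacteristic equation}) we may take $\x$ strictly positive, satisfying the strong eigenequations. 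For each edge $e$ set $w(e):=\prod_{v\in e}x_v$ (suitably normalized so $\sum_e w(e)=1$, which follows from multiplying the eigenequation for $v$ by $x_v$ and summing, as in the displayed computation before \eqref{eq:weakcharacteristic equation}), and define
\[
B(v,e):=\frac{1}{k\,\lambda^{(p)}(H)}\cdot\frac{\prod_{u\in e,\,u\neq v}x_u}{x_v^{\,p-1}}\cdot(\text{normalizing constant}),
\]
chosen so that $\sum_{e:\,v\in e}B(v,e)=1$ is precisely the strong eigenequation at $v$. Then conditions $(3)$ and $(4)$ become algebraic identities: $(4)$ says $w(e_i)/B(v,e_i)$ is independent of $i$ because that ratio equals a fixed multiple of $x_v^{\,p}$, and $(3)$ computes $w(e)^{p-k}\prod_{v\in e}B(v,e)$ to the stated constant $\alpha=k^{p-k}/(\lambda^{(p)}(H))^p$ by direct substitution, the powers of the $x_v$ cancelling by a counting argument (each $v\in e$ contributes $x_v$ from $w(e)$ with multiplicity $p-k$ offset against the $B$-factors). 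I would be careful to track the normalization constants so that $(1)$, $(2)$, $(3)$ are simultaneously consistent; this bookkeeping is the only delicate part of this direction.

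Next I would prove the ``if'' direction: given $B$ and $\{w(e)\}$ satisfying $(1)$--$(4)$ with $\alpha=k^{p-k}/\lambda^p$, show $\lambda^{(p)}(H)=\lambda$. For the lower bound $\lambda^{(p)}(H)\geq\lambda$ I would reverse-engineer a test vector: using $(4)$, the common value $c_v:=w(e_i)/B(v,e_i)$ is well defined for each $v$; set $x_v:=c_v^{1/p}$ (up to a global scaling to land on $\mathbb{S}_p^{n-1}$, using $(1)$ and $(2)$ to check $\|\x\|_p=1$), and verify by substitution that this $\x$ satisfies the strong eigenequations with eigenvalue $\lambda$, hence $P_H(\x)\geq\lambda$ forces $\lambda^{(p)}(H)\geq\lambda$. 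For the upper bound $\lambda^{(p)}(H)\leq\lambda$ — which I expect to be the main obstacle — I would take an arbitrary nonnegative $\z$ with $\|\z\|_p=1$ and bound $P_H(\z)=\sum_e\prod_{v\in e}z_v$ from above by $\lambda$. The natural route is to insert the incidence weights: write $\prod_{v\in e}z_v=\big(\prod_{v\in e}B(v,e)\big)^{-1}\prod_{v\in e}\big(B(v,e)z_v\big)$, substitute $\prod_{v\in e}B(v,e)=\alpha\,w(e)^{-(p-k)}$ from $(3)$, and then apply the generalized H\"older inequality \eqref{eq:GeqHolder} (with exponents $p$ on the $k$ factors $z_v$ and $p/(p-k)>1$ on the factor $w(e)$, so $k/p+(p-k)/p=1$) to $\sum_e$, turning the sum of products into a product of sums; conditions $(2)$ and $(1)$ then collapse the resulting factors to $1$, leaving $P_H(\z)\leq\alpha^{-1/p}k^{(p-k)/p}\cdot(\text{const})=\lambda$. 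Equality analysis in \eqref{eq:GeqHolder} together with $(3)$ and $(4)$ pins down when the maximum is attained, confirming it is exactly $\lambda$.

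The hard part will be the upper-bound H\"older estimate: getting the exponents to match the eccentricity-type balance $k/p+(p-k)/p=1$, choosing the factorization of $\prod_{v\in e}z_v$ so that the per-vertex sums that emerge are precisely $\sum_{e:\,v\in e}B(v,e)=1$, and keeping the constant equal to $\lambda$ rather than something merely proportional. I would organize the computation so that the weights $w(e)$ play the role of the ``extra'' H\"older factor that compensates for the gap $p-k$ between the uniformity and the norm exponent — this is exactly the mechanism that makes \autoref{thm:p_spec} a strict generalization of \autoref{thm:uniform hypergraph normal label} (the case $p=k$ kills the $w(e)$ factor and $(4)$ becomes vacuous). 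A secondary point requiring care is that condition $(3)$ as stated already encodes $\alpha$ in terms of $\lambda^{(p)}(H)$, so in the ``if'' direction I must treat $\lambda$ as the unknown determined by $\alpha=k^{p-k}/\lambda^p$ and verify both inequalities close up to give equality; I would also note that connectedness is not needed here (unlike in \autoref{thm:uniform hypergraph normal label}, condition $(4)$ replaces the cycle condition $(3)$ of that theorem and is local), and remark on how disconnected $H$ is handled componentwise if necessary.
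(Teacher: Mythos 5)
Your plan is the same one the paper uses for its own generalization of this result: \autoref{thm:p_spec} is quoted from \cite{LiuLu2018:p_spec} without proof here, but \autoref{lem:elliptic-normal} is its exact analogue and is proved by precisely your construction — $B(v,e)$ and $w(e)$ built from a positive eigenvector (guaranteed for $p>k$ without any connectedness hypothesis, as you correctly note) in the ``only if'' direction, and a test vector $x_v=\big(w(e)/(kB(v,e))\big)^{1/p}$ plus a H\"older estimate in the ``if'' direction. One mechanical step in your upper bound needs repair, though: you cannot apply the generalized H\"older inequality over $\sum_e$ with a separate exponent $p$ attached to ``each of the $k$ vertex factors,'' because the identity of the $j$-th vertex varies from edge to edge, so the factors that come out are not the per-vertex sums $\sum_{e:\,v\in e}B(v,e)z_v^p$ and condition $(2)$ cannot be invoked directly. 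The correct route, which is exactly what the proof of \autoref{lem:elliptic-normal} does, is a two-exponent H\"older split over $\sum_e$ — exponent $p/(p-k)$ on the factor $w(e)^{(p-k)/p}$ and exponent $p/k$ on the factor $\prod_{v\in e}(B(v,e))^{1/k}z_v^{p/k}$ — followed by the AM--GM inequality inside each edge, $\prod_{v\in e}(B(v,e))^{1/k}z_v^{p/k}\leq\tfrac{1}{k}\sum_{v\in e}B(v,e)z_v^p$; only after this does summing over $e$ and using $(1)$, $(2)$ collapse everything to the stated bound. With that correction, and the normalization bookkeeping you already flag (the constant $k^{p-k}$ in $\alpha$ is sensitive to whether $\lambda^{(p)}$ carries the factor $k$ in its polynomial form), your argument closes.
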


The main focus of this section is to develop a similar method as 
\autoref{thm:uniform hypergraph normal label} (and \autoref{thm:p_spec}) 
for calculating $\lambda_{p,q}(G)$, as well as for comparing $\lambda_{p,q}(G)$ 
with a particular value. Before continuing, we need the following 
Perron--Frobenius theorem for rectangular tensors. We say an index 
$v$ is an {\em isolated vertex} for a rectangular tensor $\mathcal{A}$ 
if the $v$-th row is zero: $a_{vi_2\cdots i_rj_1\cdots j_s}\equiv 0$ or 
$a_{i_1\cdots i_rvj_2\cdots j_s}\equiv 0$.
  
\begin{theorem}[Lu, Yang, Zhao \cite{LYZ}]\label{thm:Perron–Frobenius theorem}
Suppose that $\A$ is an $(r,s)$-th order $(m\times n)$-dimensional nonnegative rectangular 
tensor with no isolated vertex.
\begin{enumerate}
\item[$(1)$] If $r/p+s/q<1$, then $\A$ has a unique positive eigenvalue-eigenvetors triple.
\item[$(2)$] If $(r-1)/p+s/q<1$, $r/p+(s-1)/q<1$, and $\A$ is partially symmetric and weakly 
irreducible, then $\A$ has a positive eigenvalue-eigenvetors triple. If further $r/p+s/q=1$, 
then $\A$ has a unique positive eigenvalue-eigenvetors triple.
\end{enumerate}
\end{theorem}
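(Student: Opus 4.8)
The plan is to realise a positive eigenvalue--eigenvectors triple $(\lambda,\bm{x},\bm{y})$ of $\A$ --- meaning $\lambda>0$, $\bm{x}\in\R^m_{++}$, $\bm{y}\in\R^n_{++}$ with
\[
\sum_{\substack{i_2,\ldots,i_r\\ j_1,\ldots,j_s}}a_{ii_2\cdots i_rj_1\cdots j_s}x_{i_2}\cdots x_{i_r}y_{j_1}\cdots y_{j_s}=\lambda x_i^{p-1},\qquad
\sum_{\substack{i_1,\ldots,i_r\\ j_2,\ldots,j_s}}a_{i_1\cdots i_rjj_2\cdots j_s}x_{i_1}\cdots x_{i_r}y_{j_2}\cdots y_{j_s}=\lambda y_j^{q-1}
\]
($i\in[m]$, $j\in[n]$) --- as a fixed point of a multihomogeneous map. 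Let $\Phi_1(\bm{x},\bm{y})$ and $\Phi_2(\bm{x},\bm{y})$ be the entrywise $(p-1)$-th and $(q-1)$-th roots of the two left-hand sides; a positive triple is exactly a fixed point of
\[
G(\bm{x},\bm{y})=\Big(\Phi_1(\bm{x},\bm{y})/\|\Phi_1(\bm{x},\bm{y})\|_p,\ \Phi_2(\bm{x},\bm{y})/\|\Phi_2(\bm{x},\bm{y})\|_q\Big)
\]
on $\mathbb{S}^{m-1}_{p,++}\times\mathbb{S}^{n-1}_{q,++}$, the eigenvalue being read off from the two normalising constants. Since $\A\ge0$ has no isolated vertex, $\Phi_1,\Phi_2$ are positive and continuous on $\R^m_{++}\times\R^n_{++}$ and $G$ is order-preserving there; $G$ is multihomogeneous with nonnegative exponent matrix
\[
M=\begin{pmatrix}\dfrac{r-1}{p-1} & \dfrac{s}{p-1}\\[3mm]\dfrac{r}{q-1} & \dfrac{s-1}{q-1}\end{pmatrix}.
\]
As $e:=r/p+s/q<1$ forces $p>r$ and $q>s$, under that hypothesis $M$ is well defined and $I-M$ has positive diagonal; a short computation gives $\det(I-M)=(pq-ps-rq)/((p-1)(q-1))$, hence $\rho(M)<1\iff e<1$ and $\rho(M)=1\iff e=1$. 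The extra hypotheses in part (2), $(r-1)/p+s/q<1$ and $r/p+(s-1)/q<1$, are what keep $G$ a well-defined continuous self-map of the \emph{closed} simplices, nondegenerate on the boundary, so that the fixed-point arguments below go through.

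For part (1): $e<1$ gives $\rho(M)<1$, and the nonlinear Perron--Frobenius mechanism for order-preserving multihomogeneous maps (in the spirit of Gautier--Tudisco--Hein) shows that $G$ is a strict contraction of $\R^m_{++}\times\R^n_{++}$, of ratio $\rho(M)=e$, in the product Thompson metric $d\big((\bm{x},\bm{y}),(\bm{x}',\bm{y}')\big)=w_1 d_{\mathrm{T}}(\bm{x},\bm{x}')+w_2 d_{\mathrm{T}}(\bm{y},\bm{y}')$, where $(w_1,w_2)$ is a positive left Perron vector of $M$. The Thompson metric being complete on the open cone, Banach's fixed point theorem gives a unique fixed point, i.e.\ a unique positive eigentriple of $\A$. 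This settles part (1).

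For part (2) I would prove existence by perturbation and Brouwer, then uniqueness (when $e=1$) by a max--min argument. Set $\A_\varepsilon=\A+\varepsilon\mathcal{J}$ with $\mathcal{J}$ the all-ones $(r,s)$-th order $(m\times n)$ rectangular tensor; then $G_\varepsilon$ maps the compact set $\mathbb{S}^{m-1}_{p,+}\times\mathbb{S}^{n-1}_{q,+}$ (homeomorphic to a closed ball) continuously into its positive part, so Brouwer gives a fixed point, hence a positive eigentriple $(\lambda_\varepsilon,\bm{x}_\varepsilon,\bm{y}_\varepsilon)$ of $\A_\varepsilon$. The $\lambda_\varepsilon$ are bounded (e.g.\ by the sum of the entries of $\A_1$), so a subsequence converges to $(\lambda_0,\bm{x}_0,\bm{y}_0)$ solving the eigen-system of $\A$, with $\lambda_0\ge0$, $\bm{x}_0,\bm{y}_0\ge0$, $\|\bm{x}_0\|_p=\|\bm{y}_0\|_q=1$. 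If $\bm{x}_0$ or $\bm{y}_0$ had a zero coordinate, the standard support-invariance argument for weakly irreducible nonnegative tensors --- substituting into the eigen-system and using connectedness of $G(\A)$ together with partial symmetry --- would force the support to be all of $[m]\cup[n]$, a contradiction; so $(\lambda_0,\bm{x}_0,\bm{y}_0)$ is a positive eigentriple. For uniqueness when $e=1$, given two positive eigentriples $(\lambda_1,\bm{x}^{(1)},\bm{y}^{(1)})$, $(\lambda_2,\bm{x}^{(2)},\bm{y}^{(2)})$, put $t=\max\{\max_i x^{(1)}_i/x^{(2)}_i,\ \max_j y^{(1)}_j/y^{(2)}_j\}$; comparing the two eigen-systems at a coordinate attaining this maximum and using partial symmetry and connectedness of $G(\A)$ to propagate the resulting equality to every coordinate yields $\bm{x}^{(1)}=t\bm{x}^{(2)}$, $\bm{y}^{(1)}=t\bm{y}^{(2)}$, $\lambda_1=\lambda_2$; the normalisations then give $t=1$.

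The hard part is the parabolic phase $e=1$ of part (2): there $\rho(M)=1$, so $G$ is merely non-expansive and the contraction argument of part (1) collapses, throwing all the weight onto weak irreducibility --- both the support-invariance step (to get strict positivity in the limit $\varepsilon\to0$, which also genuinely needs the no-isolated-vertex and weak-irreducibility hypotheses) and the max--min comparison have to be run carefully so that connectedness of $G(\A)$ really does propagate the relevant equalities to all coordinates, and this is where partial symmetry enters. A smaller technical point is the contraction estimate in part (1): choosing the weights $(w_1,w_2)$ and checking that, after the $\|\cdot\|_p$- and $\|\cdot\|_q$-normalisations, the contraction ratio is exactly $\rho(M)=e$ rather than merely something $<1$ requires some bookkeeping with the Thompson metric and the homogeneity exponents.
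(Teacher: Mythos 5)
First, a point of comparison: the paper does not prove this statement at all --- it is imported wholesale from the reference [LYZ] (Lu, Yang, Zhao, \emph{Perron--Frobenius Theorem for Rectangular Tensors and Directed Hypergraphs}), so there is no in-paper proof to measure your argument against. Judged on its own, your plan follows the standard modern route to such results (a Thompson/Hilbert-metric contraction for the sub-critical case, and perturbation plus Brouwer plus an irreducibility argument for the critical case), and the skeleton is sound: the homogeneity matrix $M$ is the right object, your computation $\det(I-M)=(pq-ps-rq)/((p-1)(q-1))$ is correct, and the equivalence $\rho(M)<1\iff r/p+s/q<1$ does hold in the relevant regime. One outright error: the identity $\rho(M)=e$ is false (take $r=s=1$, $p=q=4$: then $\rho(M)=1/3$ while $e=1/2$); fortunately only $\rho(M)<1$ is needed, so this does not damage part (1). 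You should also record that the two normalising constants of $G$ at a fixed point coincide (multiply each eigenequation by $x_i$ resp.\ $y_j$ and sum; both give $\A\bm{x}^r\bm{y}^s$), otherwise a fixed point of $G$ is not yet an eigentriple.

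The genuine gaps are in part (2), and they are exactly the places you flag as ``the hard part,'' which means the substance of the theorem is deferred rather than proved. Concretely: (i) in the limit $\varepsilon\to 0$ you have no lower bound on $\lambda_\varepsilon$ --- your construction only gives $\lambda_\varepsilon\ge\varepsilon$, so a priori $\lambda_0=0$ and the whole support argument stalls. This is fixable (since $\A_\varepsilon$ is entrywise positive, its positive eigenvalue admits the variational characterisation $\lambda_\varepsilon=\max\A_\varepsilon\bm{x}^r\bm{y}^s\ge\max\A\bm{x}^r\bm{y}^s>0$, the last inequality using that $\A\ne 0$ because it has no isolated vertex), but the fix has to be stated. (ii) The ``standard support-invariance argument'' is not standard boilerplate here: from $\sum a_{k i_2\cdots j_s}x_{i_2}\cdots y_{j_s}=\lambda_0 x_k^{p-1}=0$ one only learns that every positive entry indexed by $k$ meets a zero coordinate somewhere, and turning that into ``the support is all of $[m]\cup[n]$'' via connectedness of $G(\A)$ is precisely where partial symmetry and the structure of the bipartite graph must be used; as written this step is a name, not an argument. (iii) Likewise the max-ratio uniqueness proof at $e=1$: the propagation of equality along $G(\A)$ needs to be carried out, since that is where weak irreducibility earns its keep. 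So the proposal is a credible outline of the [LYZ] theorem but not yet a proof of it.
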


From \autoref{thm:Perron–Frobenius theorem} and \autoref{lem:weakly irreducible}, 
we have the following statement.

\begin{theorem}\label{thm:Perron–Frobenius for dihypergraphs}
Suppose that $G$ is an $(r,s)$-directed hypergraphs with no isolated vertex.
\begin{enumerate}
\item[$(1)$] If $r/p+s/q<1$, then $G$ has a unique positive eigenpair to $\lambda_{p,q}(G)$.
\item[$(2)$] If $r/p+s/q=1$, and $G$ is anadiplosis connected,
then $G$ has a unique positive eigenpair to $\lambda_{p,q}(G)$.
\end{enumerate}
\end{theorem}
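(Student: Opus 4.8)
The plan is to deduce \autoref{thm:Perron–Frobenius for dihypergraphs} from \autoref{thm:Perron–Frobenius theorem} by verifying that the adjacency tensor $\mathcal{A}(G)$ satisfies the required hypotheses. First I would recall from \autoref{def:adjacency tensor} that $\mathcal{A}(G)$ is a nonnegative $(r,s)$-th order $(m\times n)$-dimensional rectangular tensor, and observe that "$G$ has no isolated vertex" translates directly into "$\mathcal{A}(G)$ has no isolated vertex" in the sense defined just before \autoref{thm:Perron–Frobenius theorem}: a vertex $v\in T(G)$ fails to appear in the tail of any arc iff the corresponding row of $\mathcal{A}(G)$ is identically zero, and similarly for $v\in H(G)$. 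Since every vertex of $G$ lies in $T(G)\cup H(G)$ and has positive degree, no row of $\mathcal{A}(G)$ vanishes.

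Next I would translate the eigenvalue-eigenvectors triple of the tensor into the eigenpair of $G$. A positive eigenvalue-eigenvectors triple $(\lambda,\bm{x},\bm{y})$ of $\mathcal{A}(G)$ is, after the usual normalization $\|\bm{x}\|_p=\|\bm{y}\|_q=1$, exactly a positive solution of the strong eigenequations \eqref{eq:strongcharacteristic equation} (equivalently, via the Lagrange computation carried out in Section \ref{sec2}, the weak eigenequations \eqref{eq:weakcharacteristic equation} with eigenvalue $\lambda_{p,q}(G)$). I would point out that by the reasoning preceding \eqref{eq:weakcharacteristic equation}, any eigenpair to $\lambda_{p,q}(G)$ can be taken nonnegative, and that any positive eigenvalue of the nonnegative tensor coincides with $\lambda_{p,q}(G)$: indeed multiplying the $i$-th (resp. $j$-th) strong eigenequation by $x_i$ (resp. $y_j$) and summing recovers $r\lambda = p\mu$ and $\lambda = P_G(\bm{x},\bm{y})$, so the positive eigenvalue must equal the maximum $\lambda_{p,q}(G)$.

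For part $(1)$, the hypothesis $r/p+s/q<1$ is precisely the hypothesis of \autoref{thm:Perron–Frobenius theorem}$(1)$, so $\mathcal{A}(G)$ has a unique positive eigenvalue-eigenvectors triple, which by the dictionary above gives the unique positive eigenpair to $\lambda_{p,q}(G)$. For part $(2)$, the hypothesis is $r/p+s/q=1$ together with $G$ anadiplosis connected. By \autoref{lem:weakly irreducible}, anadiplosis connectedness of $G$ is equivalent to weak irreducibility of $\mathcal{A}(G)$, and $\mathcal{A}(G)$ is partially symmetric by the remark following \autoref{def:adjacency tensor}. I would then check the remaining numerical conditions of \autoref{thm:Perron–Frobenius theorem}$(2)$: from $r/p+s/q=1$ we get $(r-1)/p+s/q = 1 - 1/p < 1$ and $r/p+(s-1)/q = 1 - 1/q < 1$ (using $p,q\geq 1$, in fact $p,q>0$ suffices here since these are strict). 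Hence all hypotheses of \autoref{thm:Perron–Frobenius theorem}$(2)$ hold, and the "$r/p+s/q=1$" clause of that theorem yields a \emph{unique} positive eigenvalue-eigenvectors triple, which translates to the unique positive eigenpair to $\lambda_{p,q}(G)$.

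The main obstacle is not any single hard estimate but rather the bookkeeping of the correspondence between the tensor-theoretic language of \autoref{thm:Perron–Frobenius theorem} and the hypergraph language here: one must be careful that the $\frac{1}{r!s!}$ normalization in \autoref{def:adjacency tensor} is exactly what makes $\mathcal{A}(G)\bm{x}^r\bm{y}^s = P_G(\bm{x},\bm{y})$ (each arc contributes $r!s!$ equal entries), so that the positive eigenvalue of the tensor really is $\lambda_{p,q}(G)$ and not some rescaling of it, and that "no isolated vertex" and "weakly irreducible" are faithfully matched via \autoref{lem:weakly irreducible}. Once this dictionary is set up cleanly, the theorem is essentially a direct corollary.
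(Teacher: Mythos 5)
Your proposal matches the paper exactly: the paper derives this theorem in one line from \autoref{thm:Perron–Frobenius theorem} combined with \autoref{lem:weakly irreducible}, which is precisely the reduction you carry out (with the additional, correct, verification that $r/p+s/q=1$ forces $(r-1)/p+s/q<1$ and $r/p+(s-1)/q<1$). Your write-up simply makes explicit the dictionary between tensor eigentriples and eigenpairs that the paper leaves implicit.
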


We say that vertices $u$ and $v$ are {\em equivalent} in $G$, in writing $u\sim v$, 
if there exists an automorphism $\pi$ of $G$ such that $\pi(u)=v$.
The following is a direct corollary of \autoref{thm:Perron–Frobenius for dihypergraphs}.

\begin{corollary}\label{coro:equivalence}
Let $G$ be an $(r,s)$-directed hypergraph, and let $u$, $v\in T(G)$ 
$($or $u$, $v\in H(G)$$)$, $u\sim v$. Suppose that  
$(\bm{x},\bm{y})\in\mathbb{S}^{m-1}_{p,+}\times\mathbb{S}^{n-1}_{q,+}$ 
is an eigenpair to $\lambda_{p,q}(G)$. Then $x_u=x_v$ $($or $y_u=y_v)$ 
if one of the following holds: 
\begin{enumerate}
\item[$(1)$] $r/p+s/q<1$;
\item[$(2)$] $r/p+s/q=1$ and $G$ is anadiplosis connected.
\end{enumerate}
\end{corollary}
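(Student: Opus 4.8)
\textbf{Proof proposal for \autoref{coro:equivalence}.}
The plan is to leverage the uniqueness part of \autoref{thm:Perron--Frobenius for dihypergraphs} together with the fact that an automorphism of $G$ permutes the coordinates of any eigenpair. First I would observe that under either hypothesis (1) or (2), \autoref{thm:Perron--Frobenius for dihypergraphs} guarantees that $G$ has a \emph{unique} positive eigenpair $(\bm{x},\bm{y})\in\mathbb{S}^{m-1}_{p,+}\times\mathbb{S}^{n-1}_{q,+}$ to $\lambda_{p,q}(G)$. (In case (2) one should note that a directed hypergraph with an isolated vertex cannot be anadiplosis connected unless it is trivial, so the ``no isolated vertex'' hypothesis of the Perron--Frobenius theorem is automatically in force; this is a small remark to dispatch at the start.) Since $(\bm{x},\bm{y})$ is assumed positive, it is in particular this unique positive eigenpair.

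Next I would set up the automorphism action. Let $\pi$ be an automorphism of $G$ with $\pi(u)=v$; by definition $\pi$ permutes $T(G)$ among itself and $H(G)$ among itself, carries arcs to arcs, and preserves tails and heads. Define $\bm{x}^\pi$ by $(\bm{x}^\pi)_w=x_{\pi^{-1}(w)}$ for $w\in T(G)$, and $\bm{y}^\pi$ analogously on $H(G)$. Because $\pi$ is a bijection on each side, $\|\bm{x}^\pi\|_p=\|\bm{x}\|_p=1$ and $\|\bm{y}^\pi\|_q=\|\bm{y}\|_q=1$, and both vectors are still positive. The key computation is that $P_G(\bm{x}^\pi,\bm{y}^\pi)=P_G(\bm{x},\bm{y})$: reindexing the sum over arcs $e\in E(G)$ by $e\mapsto \pi(e)$ is a bijection of $E(G)$, and for each arc the product $\prod_{w\in T(\pi(e))}(\bm{x}^\pi)_w\prod_{w\in H(\pi(e))}(\bm{y}^\pi)_w$ equals $\prod_{w\in T(e)}x_w\prod_{w\in H(e)}y_w$ by the definitions of $\bm{x}^\pi,\bm{y}^\pi$ and the fact that $\pi$ sends $T(e)$ to $T(\pi(e))$ and $H(e)$ to $H(\pi(e))$. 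Hence $(\bm{x}^\pi,\bm{y}^\pi)$ is also a positive eigenpair to $\lambda_{p,q}(G)$.

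Now I invoke uniqueness: $(\bm{x}^\pi,\bm{y}^\pi)=(\bm{x},\bm{y})$, so $x_{\pi^{-1}(w)}=x_w$ for all $w\in T(G)$ and similarly on $H(G)$. Taking $w=v$ and using $\pi(u)=v$ (equivalently $\pi^{-1}(v)=u$) gives $x_u=x_v$ when $u,v\in T(G)$; the case $u,v\in H(G)$ is identical with $\bm{y}$ in place of $\bm{x}$. This completes the argument. The only genuinely delicate point is the reindexing identity $P_G(\bm{x}^\pi,\bm{y}^\pi)=P_G(\bm{x},\bm{y})$, which is really just bookkeeping once one is careful that an automorphism of a \emph{directed} hypergraph must respect the tail/head partition of each arc (so it does not mix $T(G)$-coordinates with $H(G)$-coordinates); I would state this compatibility explicitly rather than leave it implicit. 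Everything else — the norm invariance, positivity, and the final appeal to uniqueness — is immediate.
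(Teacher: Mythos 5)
Your route---transport the eigenpair by the automorphism $\pi$, verify that $(\bm{x}^\pi,\bm{y}^\pi)$ is again a unit-norm maximizer of $P_G$, and then invoke the uniqueness of the positive eigenpair from the Perron--Frobenius theorem for directed hypergraphs---is exactly the argument the paper intends; it offers no written proof and calls the statement a direct corollary of that theorem, and your reindexing identity $P_G(\bm{x}^\pi,\bm{y}^\pi)=P_G(\bm{x},\bm{y})$ is the right (and only) piece of bookkeeping. The one genuine gap is your sentence ``since $(\bm{x},\bm{y})$ is assumed positive'': in this paper's notation $\mathbb{S}^{m-1}_{p,+}$ denotes the \emph{nonnegative} unit sphere, while positivity is $\mathbb{S}^{m-1}_{p,++}$, so the hypothesis of \autoref{coro:equivalence} does not hand you a positive eigenpair, and uniqueness of the \emph{positive} eigenpair says nothing a priori about a nonnegative maximizer with zero coordinates. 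To close this you must either (a) restrict attention to positive eigenpairs, which is evidently what the authors have in mind, or (b) add an argument that under (1) or (2) every nonnegative eigenpair is in fact positive. Option (b) is the standard Perron--Frobenius supplement but is not contained in the quoted theorem: one restricts $(\bm{x},\bm{y})$ to its support, observes via the weak eigenequations that all arcs meeting a zero coordinate contribute nothing, deduces that the induced sub-dirhypergraph on the support attains the same $(p,q)$-spectral radius with a positive eigenpair of full norm, and then needs the strict monotonicity of $\lambda_{p,q}$ under deletion of arcs from an anadiplosis connected hypergraph (respectively the component formula of \autoref{lem:r/p+s/q<1-components} in the elliptic phase) to conclude the support is everything. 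That last strictness is not proved in the paper either, so you should at minimum flag it rather than assert positivity as given. Everything else in your write-up---the norm invariance, the compatibility of $\pi$ with the tail/head partition, and the final appeal to uniqueness---is correct.
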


\subsection{Parabolic phase: $\frac{r}{p}+\frac{s}{q}=1$}

\begin{definition}\label{def:parabolic-normal}
An $(r,s)$-directed hypergraph $G$ is called {\em parabolic $\alpha$-normal} if 
$r/p+s/q=1$ and there exists a weighted incidence matrix $B$ satisfying
\begin{enumerate}
\item[(1)] $\displaystyle\sum_{e:\,u\in T(e)}B(u,e)=\sum_{e:\,v\in H(e)}B(v,e)=1$, 
for any $u\in T(G)$, $v\in H(G)$; 

\item[(2)] $\displaystyle \prod_{u\in T(e)}\big(B(u,e)\big)^{1/p}\cdot
\prod_{v\in H(e)}\big(B(v,e)\big)^{1/q}=\alpha$, for any $e\in E(G)$.
\end{enumerate}
Moreover, the weighted incidence matrix $B$ is called {\em parabolic consistent} if 
for any anadiplosis cycle $v_0e_1v_1e_2\cdots v_{\ell-1}e_{\ell}v_{\ell}$ 
$(v_{\ell}=v_0)$,
\[
\prod_{i=1}^{\ell}\frac{B(v_{i-1},e_i)}{B(v_i,e_i)}=1.
\]
\end{definition}

\begin{example}
Consider the following $(2,1)$-directed hypergraph $C_{\ell}^{(2,1)}$ with $\ell$ 
arcs ($\ell$ is even). Clearly, $C_{\ell}^{(2,1)}$ is an anadiplosis cycle. 
Here, black (white) node represents the vertex in tail (head).
\begin{center}
\begin{tikzpicture}[scale=0.8]
\foreach \i in {30,60,...,330}
\filldraw[fill=gray!50,draw=gray!50] (\i:2.7)--(\i-15:1.7387)--(\i+15:1.7387)--(\i:2.7);

\foreach \i in {30,60,...,330}
\filldraw[fill=black] (\i:2.7) circle (0.07);

\foreach \i in {-15,45,...,345}
\draw (\i:1.7387) circle (0.07);

\foreach \i in {15,75,...,315}
\filldraw[fill=black] (\i:1.7387) circle (0.07);

\node[rotate=90] at ($(15:1.7387)!0.5!(-15:1.7387)$) {$\cdots$};
\node[right,rotate=-30] at (75:1.7387) {$\frac{1}{2}$};
\node[left] at (75:1.7387) {$\frac{1}{2}$};
\node[right] at (105:1.7387) {$\frac{1}{2}$};
\node[left,rotate=30] at (105:1.7387) {$\frac{1}{2}$};
\node[below=2mm] at (270:2.7) {$C_{\ell}^{(2,1)}$};
\end{tikzpicture}
\end{center}
We label the value $B(v,e)$ at vertex $v$ near the side of arc $e$. If $d_v=1$, 
then it has the trivial value $1$, and therefore we omit its labeling. If $d_v=2$, 
we let $B(v,e)=1/2$. It can checked that $C_{\ell}^{(2,1)}$ is parabolic 
consistently $2^{-(1/p+1/q)}$-normal.
\end{example}

\begin{lemma}\label{lem:iff}
Let $G$ be an $(r,s)$-directed hypergraph with $r/p+s/q=1$. If $G$ is anadiplosis 
connected, then the $(p,q)$-spectral radius of $G$ is $\lambda_{p,q}(G)$ if and only 
if $G$ is parabolic consistently $\alpha$-normal with 
\[
\alpha=\frac{1}{r^{r/p}s^{s/q} \lambda_{p,q}(G)}.    
\]
\end{lemma}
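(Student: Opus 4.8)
The plan is to mirror the proof of \autoref{thm:uniform hypergraph normal label}, but adapt it to the bipartite tail/head structure and the scaling exponents $1/p$ and $1/q$. I split the argument into necessity and sufficiency.

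For necessity, suppose the $(p,q)$-spectral radius of the anadiplosis-connected $G$ is $\lambda_{p,q}(G)$. Since $r/p+s/q=1$ and $G$ is anadiplosis connected, \autoref{thm:Perron–Frobenius for dihypergraphs}(2) gives a unique positive eigenpair $(\bm{x},\bm{y})\in\mathbb{S}^{m-1}_{p,++}\times\mathbb{S}^{n-1}_{q,++}$. For each arc $e$ with $T(e)=\{i_1,\dots,i_r\}$ and $H(e)=\{j_1,\dots,j_s\}$, I would define
\[
B(i_k,e)=\frac{r\lambda_{p,q}(G)\,x_{i_k}^{p}}{x_{i_1}\cdots x_{i_r}\,y_{j_1}\cdots y_{j_s}},\qquad
B(j_\ell,e)=\frac{s\lambda_{p,q}(G)\,y_{j_\ell}^{q}}{x_{i_1}\cdots x_{i_r}\,y_{j_1}\cdots y_{j_s}}.
\]
These are positive, so $B$ is a weighted incidence matrix. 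Condition (1) follows directly from the weak eigenequations \eqref{eq:weakcharacteristic equation}: summing $B(u,e)$ over arcs containing $u$ in the tail gives $r\lambda_{p,q}(G)x_u^p$ divided by the left side of the first eigenequation, which equals $1$; similarly for the head. For condition (2), I compute $\prod_{u\in T(e)}B(u,e)^{1/p}\cdot\prod_{v\in H(e)}B(v,e)^{1/q}$: the numerator contributes $(r\lambda_{p,q}(G))^{r/p}(s\lambda_{p,q}(G))^{s/q}\cdot(x_{i_1}\cdots x_{i_r})\cdot(y_{j_1}\cdots y_{j_s})$ using $\sum 1/p$ over the $r$ tail factors $=r/p$ and likewise $s/q$, together with $\prod x_{i_k}^{p\cdot 1/p}=\prod x_{i_k}$; the denominator contributes $(x_{i_1}\cdots x_{i_r}\,y_{j_1}\cdots y_{j_s})^{r/p+s/q}=(x_{i_1}\cdots x_{i_r}\,y_{j_1}\cdots y_{j_s})$ since the eccentricity is $1$. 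The $x,y$ factors cancel, leaving $\alpha=(r\lambda_{p,q}(G))^{r/p}(s\lambda_{p,q}(G))^{s/q}\cdot$(a reciprocal), which must be simplified to $1/(r^{r/p}s^{s/q}\lambda_{p,q}(G))$ using $\lambda_{p,q}(G)^{r/p+s/q}=\lambda_{p,q}(G)$. For parabolic consistency, note that along any anadiplosis cycle each ratio $B(v_{i-1},e_i)/B(v_i,e_i)$ equals either $r x_{v_{i-1}}^p/(r x_{v_i}^p)=x_{v_{i-1}}^p/x_{v_i}^p$ (when $v_{i-1},v_i\in T(e_i)$) or $y_{v_{i-1}}^q/y_{v_i}^q$ (when both in $H(e_i)$), because the big monomial $x_{i_1}\cdots y_{j_s}$ is the same for a fixed arc and cancels; the product telescopes to $1$ around the cycle.

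For sufficiency, suppose $B$ is parabolic consistently $\alpha$-normal with the stated $\alpha$. Using anadiplosis connectedness and the consistency condition, I would first show one can recover positive vertex values: fix a base vertex, and for any other vertex define $x_u$ (or $y_v$) by following an anadiplosis walk and multiplying the ratios $B(v_{i-1},e_i)/B(v_i,e_i)$; consistency (the cycle condition, plus the fact that $G$ is anadiplosis connected with semi-cycles at vertices in $T(G)\cap H(G)$ so that tail-values and head-values are tied together) makes this well-defined up to the global scaling. Concretely, I want positive $\bm{x},\bm{y}$ and a constant $c>0$ with $B(u,e)=c\,x_u^p/(x_{i_1}\cdots y_{j_s})$ and $B(v,e)=c\,y_v^q/(x_{i_1}\cdots y_{j_s})$ for every arc. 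Then condition (1) becomes exactly the weak eigenequations with $r\lambda=c$ on the tail side and $s\lambda=c$ on the head side — wait, these must be reconciled, so actually I will set things up so that $B(u,e)=r\lambda x_u^p/(\cdot)$ on tails and $s\lambda y_v^q/(\cdot)$ on heads with a common $\lambda$; the factor $r$ versus $s$ is absorbed because condition (1) normalizes both sums to $1$. Plugging this form into condition (2) and running the computation of the necessity direction in reverse forces $\lambda=\lambda_{p,q}(G)$ from $\alpha=1/(r^{r/p}s^{s/q}\lambda)$. Finally, from the strong eigenequations \eqref{eq:strongcharacteristic equation} one checks $(\bm{x},\bm{y})$ (after normalizing to unit $p$- and $q$-norms, which is legitimate since the eigenequations are scale-homogeneous) is a critical point with value $\lambda$, and by the uniqueness clause of \autoref{thm:Perron–Frobenius for dihypergraphs}(2) this positive critical value must be the maximum, i.e. $\lambda=\lambda_{p,q}(G)$.

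The main obstacle is the sufficiency direction: extracting a consistent family of positive vertex weights $\bm{x},\bm{y}$ from the weighted incidence matrix and, crucially, tying the tail-weights to the head-weights with a single common multiplier. In the undirected case of \autoref{thm:uniform hypergraph normal label} there is only one vertex set, so the cycle condition alone suffices; here the bipartite split has two classes, and it is the anadiplosis semi-cycles through $T(G)\cap H(G)$ — rather than just the cycles — that guarantee the two reconstructions are compatible. I expect the cleanest route is to work on $\overline{G}$ (the underlying graph of the bipartite split, which is connected by \autoref{lem:connected iff underlying}), define weights by path products of $B$-ratios, verify well-definedness via the consistency hypothesis, and then verify conditions (1)–(2) translate into the weak eigenequations; the rest is the homogeneity bookkeeping with the exponents $1/p,1/q$ and the identity $r/p+s/q=1$, together with an appeal to Theorem~\ref{thm:Perron–Frobenius for dihypergraphs} for the final optimality claim.
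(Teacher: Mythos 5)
Your overall architecture matches the paper's: for necessity, build $B$ from a positive eigenpair and verify the conditions via the eigenequations; for sufficiency, reconstruct vertex weights from $B$ using connectedness and consistency. But there are two concrete problems.

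First, your formula for $B$ is upside down. Writing $Z(e):=\prod_{u\in T(e)}x_u\prod_{v\in H(e)}y_v$, the correct choice is $B(u,e)=Z(e)/(r\lambda_{p,q}(G)\,x_u^p)$ for $u\in T(e)$ (and the analogous expression with $s$ and $y_v^q$ on heads), because the weak eigenequation reads $\sum_{e:\,u\in T(e)}Z(e)=r\lambda_{p,q}(G)\,x_u^p$, so the row sums are $1$. With your $B(u,e)=r\lambda_{p,q}(G)\,x_u^p/Z(e)$ the row sum is $r\lambda_{p,q}(G)\,x_u^p\sum_e Z(e)^{-1}$, which is not $1$ (a sum of reciprocals is not the reciprocal of a sum), and condition (2) evaluates to $r^{r/p}s^{s/q}\lambda_{p,q}(G)=\alpha^{-1}$ rather than $\alpha$. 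Your verbal verification only makes sense for the reciprocal (i.e.\ correct) formula, so this is a consistent slip rather than a conceptual error, but as written the necessity computation fails. A smaller point: consecutive vertices $v_{i-1},v_i$ of an arc $e_i$ in an anadiplosis cycle need not lie in the same part of $e_i$, so the individual ratios are not always $x^p/x^p$ or $y^q/y^q$; the telescoping should be organized by pairing $B(v_i,e_{i+1})/B(v_i,e_i)$ (same vertex, same role in both arcs by the anadiplosis condition), so that each such ratio reduces to $Z(e_{i+1})/Z(e_i)$ and the product collapses.

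Second, and more substantively, your sufficiency argument takes a genuinely different route and has a gap. The paper first proves, by H\"older's inequality and the AM--GM inequality, that \emph{any} parabolic $\alpha$-normal labeling forces $P_G(\bm{x},\bm{y})\le (r^{r/p}s^{s/q}\alpha)^{-1}$ for all unit vectors --- this is \eqref{eq:sufficient conditions} --- and then uses consistency to construct vectors achieving equality by solving \eqref{eq:solution}. You skip the upper bound entirely and instead reconstruct a positive solution of the eigenequations with eigenvalue $\lambda=(r^{r/p}s^{s/q}\alpha)^{-1}$ and appeal to uniqueness of the positive eigenpair. But the uniqueness statement you cite concerns eigenpairs \emph{to} $\lambda_{p,q}(G)$, i.e.\ uniqueness of the maximizer; to conclude $\lambda=\lambda_{p,q}(G)$ from an arbitrary positive solution of the eigenequations you need the stronger assertion that no sub-maximal critical value admits a positive critical point. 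That stronger form may be what the underlying rectangular-tensor Perron--Frobenius theorem asserts, but you would have to invoke it explicitly; as written, your positive solution could a priori be a non-maximal critical point of $P_G$ on the product of spheres. The paper's H\"older/AM--GM route is self-contained, avoids this issue, and is also exactly the mechanism behind \autoref{lem:parabolic-subnormal} and \autoref{lem:parabolic-supernormal}, so you lose those byproducts as well.
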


\begin{proof}
We first show that it is necessary. By \autoref{thm:Perron–Frobenius for dihypergraphs}, 
let $\bm{x}=(x_1,x_2,\ldots,x_m)^{\mathrm{T}}\in\mathbb{S}^{m-1}_{p,++}$ 
and $\bm{y}=(y_1,y_2,\ldots,y_n)^{\mathrm{T}}\in\mathbb{S}^{n-1}_{q,++}$ 
be an eigenpair to $\lambda_{p,q}(G)$. Define a weighted incidence matrix 
$B$ of $G$ as follows:
\begin{equation}\label{eq:B(v,e)}
B(v,e)=\begin{dcases}
\frac{\big(\prod_{u\in T(e)}x_u\big)\big(\prod_{u\in H(e)}y_u\big)}%
{r\lambda_{p,q}(G) x_v^p}, 
& \text{if}~v\in T(e),\\
\frac{\big(\prod_{u\in T(e)}x_u\big)\big(\prod_{u\in H(e)}y_u\big)}%
{s\lambda_{p,q}(G) y_v^q}, 
& \text{if}~v\in H(e),\\
0, & \text{otherwise}.
\end{dcases}
\end{equation}
For any $u\in T(G)$, $v\in H(G)$, by equation \eqref{eq:strongcharacteristic equation} we have
\[
\sum_{e:\,u\in T(e)}B(u,e)=
\frac{\sum_{e:\,u\in T(e)}\big(\prod_{v\in T(e)}x_v\big)\big(\prod_{v\in H(e)}y_v\big)}%
{r\lambda_{p,q}(G) x_u^p}=1
\]
and 
\[
\sum_{e:\,v\in H(e)}B(v,e)=
\frac{\sum_{e:\,v\in H(e)}\big(\prod_{u\in T(e)}x_u\big)\big(\prod_{u\in H(e)}y_u\big)}%
{s\lambda_{p,q}(G) y_v^q}=1.
\]
Also, for any $e\in E(G)$, it can be checked that
\[
\prod_{u\in T(e)}\big(B(u,e)\big)^{1/p}=
\frac{\Big(\prod_{v\in T(e)}x_v\prod_{v\in H(e)}y_v\Big)^{r/p}}%
{\big(r\lambda_{p,q}(G)\big)^{r/p}\prod_{v\in T(e)}x_v}
\]
and
\[
\prod_{v\in H(e)}\big(B(v,e)\big)^{1/q}=
\frac{\Big(\prod_{u\in T(e)}x_u\prod_{u\in H(e)}y_u\Big)^{s/q}}%
{\big(s\lambda_{p,q}(G)\big)^{s/q}\prod_{u\in H(e)}y_u}.
\]
It follows from $r/p+s/q=1$ that
\[
\prod_{u\in T(e)}\big(B(u,e)\big)^{1/p}\cdot\prod_{v\in H(e)}\big(B(v,e)\big)^{1/q}=
\frac{1}{r^{r/p}s^{s/q}\lambda_{p,q}(G)}=\alpha.
\]

To show that $B$ is parabolic consistent, for any anadiplosis cycle 
$v_0e_1v_1e_2\cdots v_{\ell-1}e_{\ell}v_{\ell}$ 
$(v_{\ell}=v_0)$, by the definition of anadiplosis cycle and \eqref{eq:B(v,e)} 
we conclude that
\[
\frac{B(v_i,e_{i+1})}{B(v_i,e_i)}=
\frac{\big(\prod_{u\in T(e_{i+1})}x_u\big)\big(\prod_{u\in H(e_{i+1})}y_u\big)}
{\big(\prod_{u\in T(e_i)}x_u\big)\big(\prod_{u\in H(e_i)}y_u\big)},~i\in [\ell-1].
\]
For short, we denote $Z(e):=\big(\prod_{u\in T(e)}x_u\big)\big(\prod_{u\in H(e)}y_u\big)$ 
for any $e\in E(G)$. Therefore
\[
\prod_{i=1}^{\ell}\frac{B(v_{i-1},e_i)}{B(v_i,e_i)}=
\frac{B(v_0,e_1)}{B(v_{\ell},e_{\ell})}\cdot
\prod_{i=1}^{\ell-1}\frac{B(v_i,e_{i+1})}{B(v_i,e_i)}
=\frac{Z(e_1)}{Z(e_{\ell})}\cdot\prod_{i=1}^{\ell-1}\frac{Z(e_{i+1})}{Z(e_i)}=1.
\]

Now we show that it is also sufficient. Assume that $B$ is a parabolic consistent 
$\alpha$-normal weighted incident matrix of $G$. For any nonnegative vectors 
$\bm{x}=(x_1,x_2,\ldots,x_m)^{\mathrm{T}}\in\mathbb{S}^{m-1}_{p,+}$ and 
$\bm{y}=(y_1,y_2,\ldots,y_n)^{\mathrm{T}}\in\mathbb{S}^{n-1}_{q,+}$, by 
H\"older's inequality and AM--GM inequality we have
\begin{align*}
P_G(\bm{x},\bm{y}) 
& =\sum_{e\in E(G)}\Bigg(\prod_{u\in T(e)}x_u\Bigg)\Bigg(\prod_{v\in H(e)}y_v\Bigg)\\
& =\frac{1}{\alpha}\sum_{e\in E(G)}\Bigg(\prod_{u\in T(e)}\big(B(u,e)\big)^{1/p}x_u\Bigg)
   \Bigg(\prod_{v\in H(e)}\big(B(v,e)\big)^{1/q}y_v\Bigg)\\
& \leq\frac{1}{\alpha}\Bigg(\sum_{e\in E(G)}\prod_{u\in T(e)}\big(B(u,e)\big)^{1/r}x_u^{p/r}\Bigg)^{r/p}\!
   \Bigg(\sum_{e\in E(G)}\prod_{v\in H(e)}\big(B(v,e)\big)^{1/s}y_v^{q/s}\Bigg)^{s/q}\\
& \leq\frac{1}{r^{r/p}s^{s/q}\alpha}\Bigg(\sum_{e\in E(G)}\sum_{u\in T(e)}B(u,e)x_u^p\Bigg)^{r/p}
   \left(\sum_{e\in E(G)}\sum_{v\in H(e)}B(v,e)y_v^q\right)^{s/q}\\
& =\frac{1}{r^{r/p}s^{s/q}\alpha}\cdot ||\bm{x}||_p^{r}\cdot ||\bm{y}||_q^{s}
  =\frac{1}{r^{r/p}s^{s/q}\alpha}.
\end{align*}
This inequality implies 
\begin{equation}\label{eq:sufficient conditions}
\lambda_{p,q}(G)\leq\frac{1}{r^{r/p}s^{s/q}\alpha}.
\end{equation}

The equality holds if $G$ is parabolic $\alpha$-normal and there is a nonzero solution 
$(\bm{x},\bm{y})$ to the following equations:
\begin{equation}\label{eq:solution}
rB(i_1,e)x_{i_1}^p=\cdots=rB(i_r,e)x_{i_r}^p=sB(j_1,e)y_{j_1}^q=\cdots 
=sB(j_s,e)y_{j_s}^q 
\end{equation}
for any $e\in E(G)$, $T(e)=\{i_1,i_2,\ldots,i_r\}$ and $H(e)=\{j_1,j_2,\ldots,j_s\}$. 
Fix a vertex $u_0\in T(G)$, now we consider any vertex $u\in T(G)$. Since $G$ is 
anadiplosis connected, there exists a $u_0$\,--\,$u$ anadiplosis walk: 
$u_0e_1u_1e_2\cdots u_{\ell-1}e_{\ell}u_{\ell}(u_{\ell}=u)$ in $G$. Define
\[
x_u^*=\left(\prod_{i=1}^{\ell}\frac{B(u_{i-1},e_i)}{B(u_i,e_i)}\right)^{1/p}x_{u_0}^*,    
\]
where $x_{u_0}^*$ is determined by the condition $||\bm{x}^*||_p=1$. 
Similarly, for any $v\in H(G)$, there is a $u_0$\,--\,$v$ anadiplosis walk: 
$u_0f_1v_1f_2\cdots v_{\ell'-1}f_{\ell'}v_{\ell'}(v_{\ell'}=v)$ in $G$. Define
\[
y_v^*=\left(\frac{r}{s}\prod_{i=1}^{\ell'}\frac{B(v_{i-1},f_i)}{B(v_i,f_i)}\right)^{1/q}
\left(x_{u_0}^*\right)^{p/q}.    
\] 
The consistent condition guarantees that $x_v^*$ and $y_v^*$ are independent of the choice 
of the anadiplosis walk. It is easy to check that $(\bm{x}^*,\bm{y}^*)$ is a solution of 
\eqref{eq:solution}, and
\begin{align*}
\sum_{v\in H(G)}(y_v^*)^q & =\sum_{e\in E(G)}\sum_{v\in H(e)}B(v,e)(y_v^*)^q\\
& =\sum_{e\in E(G)}\sum_{u\in T(e)}B(u,e)(x_u^*)^p\\
& =\sum_{u\in T(G)}(x_u^*)^p=1.    
\end{align*}
Therefore $\lambda_{p,q}(G)=r^{-r/p}s^{-s/q}\alpha^{-1}$.
The proof is completed.
\end{proof}

\begin{remark}
According to the proof of \autoref{lem:iff}, equation \eqref{eq:solution} is a 
sufficient condition for the equality holding in \eqref{eq:sufficient conditions}. 
We remark that it is also a necessary condition. That is, if $B$ is a parabolic 
consistently normal labeling of $G$, and
$(\bm{x},\bm{y})\in\mathbb{S}^{m-1}_{p,++}\times\mathbb{S}^{n-1}_{q,++}$ is an 
eigenpair to $\lambda_{p,q}(G)$, then \eqref{eq:solution} holds. Indeed, we assume 
\[
B(i_1,e)x_{i_1}^p=\cdots=B(i_r,e)x_{i_r}^p=cB(j_1,e)y_{j_1}^q
=\cdots =cB(j_s,e)y_{j_s}^q. 
\]
By $||\bm{x}||_p=||\bm{y}||_q=1$, we have
\begin{align*}
1=\sum_{u\in T(G)}x_u^p & =\sum_{e\in E(G)}\sum_{u\in T(e)}B(u,e)x_u^p\\
& =\sum_{e\in E(G)}\sum_{v\in H(e)}\frac{rc}{s}B(v,e)y_v^q\\
& =\frac{rc}{s}\sum_{v\in H(G)}y_v^q=\frac{rc}{s},
\end{align*}
which yields that $rc=s$.
\end{remark}

In what follows, we give a method for comparing the $(p,q)$-spectral radius with 
a particular value. It is convenient to introduce the following concepts.

\begin{definition}\label{def:parabolic-subnormal}
An $(r,s)$-directed hypergraph $G$ is called {\em parabolic $\alpha$-subnormal} 
if $r/p+s/q=1$ and there exists a weighted incidence matrix $B$ satisfying
\begin{enumerate}
\item[(1)] $\displaystyle\sum_{e:\,u\in T(e)}B(u,e)\leq 1$,~~
$\displaystyle\sum_{e:\,v\in H(e)}B(v,e)\leq1$, for any 
$u\in T(G)$, $v\in H(G)$; 

\item[(2)] $\displaystyle \prod_{u\in T(e)}\big(B(u,e)\big)^{1/p}\cdot
\prod_{v\in H(e)}\big(B(v,e)\big)^{1/q}\geq\alpha$, for any $e\in E(G)$.
\end{enumerate}
Moreover, $G$ is called {\em strictly parabolic $\alpha$-subnormal} if it is 
parabolic $\alpha$-subnormal but not parabolic $\alpha$-normal.
\end{definition}

Here is an example of parabolic $\alpha$-subnormal directed hypergraph.

\begin{example}
Consider the following $(2,1)$-directed hypergraph $P_{\ell}^{(2,1)}$. By 
labeling $P_{\ell}^{(2,1)}$ as follows:
\begin{center}
\begin{tikzpicture}
\foreach \i in {0,1,...,8}
{\coordinate (u\i) at (\i,0);
\coordinate (v\i) at ($(60:1)+(\i,0)$);}
\foreach \j in {0,1,...,4,6}
\filldraw[fill=gray!50,draw=gray!50] (u\j)--(v\j)--($(u\j)+(1,0)$)--(u\j);

\foreach \i in {1,3,5}
\draw (u\i) circle (0.06);
\foreach \i in {0,2,4,6}
\filldraw[fill=black] (u\i) circle (0.06);

\foreach \i in {0,1,2,3,4,6}
\filldraw[fill=black] (v\i) circle (0.06);

\filldraw[fill=gray!50,draw=gray!50] (7,0)--(v7)--(u8)--(7,0);
\draw (7,0) circle (0.06);
\filldraw[fill=black] (u8) circle (0.06);
\filldraw[fill=black] (v7) circle (0.06);
\node at ($(u5)!0.5!(u6)$) {$\cdots$};

\foreach \i in {1,2,3,4,5,7}
\node[anchor=east] at (u\i) {$\frac{1}{2}$};
\foreach \i in {1,2,3,4,6,7}
\node[anchor=west] at (u\i) {$\frac{1}{2}$};
\node[below=4mm] at ($(u0)!0.5!(u8)$) {$P_{\ell}^{(2,1)}$};
\end{tikzpicture}
\end{center}
We can check that $P_{\ell}^{(2,1)}$ is parabolic $2^{-(1/p+1/q)}$-subnormal.
\end{example}

\begin{lemma}\label{lem:parabolic-subnormal}
Let $G$ be an $(r,s)$-directed hypergraph. If $G$ is parabolic $\alpha$-subnormal, 
then the $(p,q)$-spectral radius of $G$ satisfies
\[
\lambda_{p,q}(G)\leq\frac{1}{r^{r/p}s^{s/q}\alpha}.   
\]
\end{lemma}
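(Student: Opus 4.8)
The plan is to mimic the ``only if'' (sufficiency) direction of \autoref{lem:iff}, but using the weaker subnormal inequalities instead of the equalities. The point is that the chain of inequalities in that proof only ever used the normalization $\sum_{e:\,u\in T(e)}B(u,e)=1$ and the product condition $\prod_{u\in T(e)}(B(u,e))^{1/p}\prod_{v\in H(e)}(B(v,e))^{1/q}=\alpha$ in one direction each: the row sums were used as upper bounds, and $\alpha$ was used as a lower bound on the product. Both of these are exactly what $\alpha$-subnormality provides.

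Concretely, first I would fix arbitrary nonnegative vectors $\bm{x}\in\mathbb{S}^{m-1}_{p,+}$ and $\bm{y}\in\mathbb{S}^{n-1}_{q,+}$ and write
\[
P_G(\bm{x},\bm{y})=\sum_{e\in E(G)}\Bigg(\prod_{u\in T(e)}x_u\Bigg)\Bigg(\prod_{v\in H(e)}y_v\Bigg)
=\sum_{e\in E(G)}\frac{\prod_{u\in T(e)}(B(u,e))^{1/p}x_u\cdot\prod_{v\in H(e)}(B(v,e))^{1/q}y_v}{\prod_{u\in T(e)}(B(u,e))^{1/p}\cdot\prod_{v\in H(e)}(B(v,e))^{1/q}}.
\]
By condition (2) of parabolic $\alpha$-subnormality, the denominator is $\geq\alpha$, so $P_G(\bm{x},\bm{y})\leq\frac{1}{\alpha}\sum_{e\in E(G)}\big(\prod_{u\in T(e)}(B(u,e))^{1/p}x_u\big)\big(\prod_{v\in H(e)}(B(v,e))^{1/q}y_v\big)$. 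Next I would apply Hölder's inequality with exponents $p/r$ and $q/s$ (legitimate since $r/p+s/q=1$) to split the sum over $e$ into a product of an ``$x$-part'' raised to $r/p$ and a ``$y$-part'' raised to $s/q$, then apply the AM--GM inequality inside each part to bound $\prod_{u\in T(e)}(B(u,e))^{1/r}x_u^{p/r}\leq\frac{1}{r}\sum_{u\in T(e)}B(u,e)x_u^p$ and similarly for the head. Finally, swapping the order of summation and using condition (1), $\sum_{e:\,u\in T(e)}B(u,e)\leq1$, gives $\sum_{e\in E(G)}\sum_{u\in T(e)}B(u,e)x_u^p\leq\sum_{u\in T(G)}x_u^p=1$, and likewise for $\bm{y}$. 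Collecting the constants yields $P_G(\bm{x},\bm{y})\leq\frac{1}{r^{r/p}s^{s/q}\alpha}$, and taking the maximum over $(\bm{x},\bm{y})$ gives the claimed bound on $\lambda_{p,q}(G)$.

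I do not expect any genuine obstacle here — this lemma is deliberately the ``easy half'' of \autoref{lem:iff}, and the subnormal hypotheses were cooked up precisely so that the inequality chain goes through without needing anadiplosis connectedness or the Perron--Frobenius machinery of \autoref{thm:Perron–Frobenius for dihypergraphs}. The only points requiring a little care are: checking that the Hölder exponents $p/r$ and $q/s$ are $\geq1$ and reciprocal (which is the $r/p+s/q=1$ hypothesis built into the definition), being careful that all quantities $B(u,e)$, $x_u$, $y_v$ are nonnegative so that raising to fractional powers and applying AM--GM is valid (we already reduced to nonnegative eigenvectors earlier in Section~\ref{sec2}), and handling the degenerate situation where some $x_u=0$ or $y_v=0$ — but AM--GM and Hölder both remain valid with zeros, so this causes no trouble. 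Thus the write-up is essentially a verbatim copy of the sufficiency computation in \autoref{lem:iff} with each ``$=$'' that came from a normalization condition replaced by the appropriate ``$\leq$''.
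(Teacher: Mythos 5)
Your proposal is correct and is essentially identical to the paper's own proof: the same chain of one-sided estimates (product condition $\geq\alpha$, H\"older with exponents $p/r$ and $q/s$, AM--GM on each factor, then the relaxed row-sum condition to bound $\sum_e\sum_{u\in T(e)}B(u,e)x_u^p$ by $\|\bm{x}\|_p^p=1$ and likewise for $\bm{y}$). Your side remarks on nonnegativity and the validity of the first division (note $B(u,e)>0$ whenever $u\in e$, by the definition of a weighted incidence matrix) are all fine.
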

    
\begin{proof}
For any nonnegative vectors $\bm{x}=(x_1,x_2,\ldots,x_m)^{\mathrm{T}}\in\mathbb{S}^{m-1}_{p,+}$ 
and $\bm{y}=(y_1,y_2,\ldots,y_n)^{\mathrm{T}}\in\mathbb{S}^{n-1}_{q,+}$, by H\"older's inequality 
and AM--GM inequality, we deduce that
\begin{align*}
P_G(\bm{x},\bm{y}) 
& \leq\frac{1}{\alpha}\sum_{e\in E(G)}\Bigg(\prod_{u\in T(e)}\big(B(u,e)\big)^{1/p}x_u\Bigg)
  \Bigg(\prod_{v\in H(e)}\big(B(v,e)\big)^{1/q}y_v\Bigg)\\
& \leq\frac{1}{\alpha}\Bigg(\sum_{e\in E(G)}\prod_{u\in T(e)}\big(B(u,e)\big)^{1/r}x_u^{p/r}\Bigg)^{r/p}
  \Bigg(\sum_{e\in E(G)}\prod_{v\in H(e)}\big(B(v,e)\big)^{1/s}y_v^{q/s}\Bigg)^{s/q}\\
& \leq\frac{1}{r^{r/p}s^{s/q} \alpha}\Bigg(\sum_{e\in E(G)}\sum_{u\in T(e)}B(u,e)x_u^p\Bigg)^{r/p}
  \left(\sum_{e\in E(G)}\sum_{v\in H(e)}B(v,e)y_v^q\right)^{s/q}\\
& \leq\frac{1}{r^{r/p}s^{s/q}\alpha}\cdot ||\bm{x}||_p^r\cdot ||\bm{y}||_q^s
=\frac{1}{r^{r/p}s^{s/q}\alpha},
\end{align*}
which implies $\lambda_{p,q}(G)\leq r^{-r/p}s^{-s/q}\alpha^{-1}$. When $G$ is 
strictly parabolic $\alpha$-subnormal, this inequality is strict, and therefore 
$\lambda_{p,q}(G)<r^{-r/p}s^{-s/q}\alpha^{-1}$.
\end{proof}

\begin{definition}\label{def:parabolic-supernormal}
An $(r,s)$-directed hypergraph $G$ is called {\em parabolic $\alpha$-supernormal} 
if $r/p+s/q=1$ and there exists a weighted incidence matrix $B$ satisfying
\begin{enumerate}
\item[(1)] $\displaystyle\sum_{e:\,u\in T(e)}B(u,e)\geq 1$,~~
$\displaystyle\sum_{e:\,v\in H(e)}B(v,e)\geq 1$,
for any $u\in T(G)$, $v\in H(G)$;

\item[(2)] $\displaystyle \prod_{u\in T(e)}\big(B(u,e)\big)^{1/p}\cdot
\prod_{v\in H(e)}\big(B(v,e)\big)^{1/q}\leq\alpha$,
for any $e\in E(G)$.
\end{enumerate}
Moreover, $G$ is called {\em strictly parabolic $\alpha$-supernormal} if it is 
parabolic $\alpha$-supernormal but not parabolic $\alpha$-normal.
\end{definition}

\begin{lemma}\label{lem:parabolic-supernormal}
Let $G$ be an $(r,s)$-directed hypergraph. If $G$ is parabolic consistently 
$\alpha$-supernormal, then the $(p,q)$-spectral radius of $G$ satisfies
\[
\lambda_{p,q}(G)\geq\frac{1}{r^{r/p}s^{s/q}\alpha}.  
\]
\end{lemma}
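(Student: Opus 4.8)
The plan is to mirror the proof of \autoref{lem:iff}, but run the inequality chain in the opposite direction. I would start by invoking \autoref{thm:Perron–Frobenius for dihypergraphs}: since a parabolic $\alpha$-supernormal directed hypergraph has $r/p+s/q=1$, provided $G$ has no isolated vertex and is anadiplosis connected, there is a positive eigenpair $(\bm{x},\bm{y})\in\mathbb{S}^{m-1}_{p,++}\times\mathbb{S}^{n-1}_{q,++}$ to $\lambda_{p,q}(G)$ (if $G$ has several anadiplosis components or isolated vertices one reduces to a component, using \autoref{lem:spectral components}, so I would comment on that reduction rather than belabor it). With such $\bm{x},\bm{y}$ in hand, I would use the variational characterization \eqref{eq:(p,q)-spectral radius}: $\lambda_{p,q}(G)=P_G(\bm{x},\bm{y})$, and it suffices to bound $P_G(\bm{x},\bm{y})$ from below by $r^{-r/p}s^{-s/q}\alpha^{-1}$ after suitably normalizing.

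The key step is to rescale the eigenvector using the consistency condition, exactly as in the sufficiency half of \autoref{lem:iff}. Fix $u_0\in T(G)$; for each $u\in T(G)$ choose an anadiplosis walk from $u_0$ to $u$ and set $x_u^* = \big(\prod_{i=1}^{\ell} B(u_{i-1},e_i)/B(u_i,e_i)\big)^{1/p} x_{u_0}^*$, and similarly define $y_v^*$ for $v\in H(G)$ with the factor $r/s$ as in the earlier proof; parabolic consistency makes these well defined, and $x_{u_0}^*$ is fixed by $\|\bm{x}^*\|_p=1$. By construction $(\bm{x}^*,\bm{y}^*)$ satisfies the balance relations $rB(i_1,e)(x_{i_1}^*)^p=\cdots=sB(j_s,e)(y_{j_s}^*)^q$ for every arc $e$, so equality holds in every application of H\"older and AM--GM. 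Then the same computation as in \autoref{lem:iff} gives
\[
P_G(\bm{x}^*,\bm{y}^*)=\frac{1}{r^{r/p}s^{s/q}}\sum_{e\in E(G)}\frac{1}{\prod_{u\in T(e)}(B(u,e))^{1/p}\prod_{v\in H(e)}(B(v,e))^{1/q}}\cdot\frac{\text{(balanced terms)}}{\text{(balanced terms)}},
\]
and using condition (2) of \autoref{def:parabolic-supernormal}, namely $\prod_{u\in T(e)}(B(u,e))^{1/p}\prod_{v\in H(e)}(B(v,e))^{1/q}\le\alpha$, each summand is at least $1/\alpha$ times its balanced weight; summing and using condition (1) of \autoref{def:parabolic-supernormal} ($\sum_{e:u\in T(e)}B(u,e)\ge 1$) together with $\|\bm{x}^*\|_p=\|\bm{y}^*\|_q=1$ yields $P_G(\bm{x}^*,\bm{y}^*)\ge r^{-r/p}s^{-s/q}\alpha^{-1}$. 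Since $\lambda_{p,q}(G)\ge P_G(\bm{x}^*,\bm{y}^*)$, the claim follows.

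The main obstacle is bookkeeping: in the supernormal case the inequalities in conditions (1) and (2) point the ``wrong'' way relative to the AM--GM/H\"older steps, so I have to be careful that the rescaled vector $(\bm{x}^*,\bm{y}^*)$ genuinely attains equality in AM--GM and H\"older (which it does, by the balance relations built into its definition), and then let \emph{only} the supernormality inequalities loosen the bound, in the direction that produces a \emph{lower} bound on $P_G$. A secondary subtlety is the hypothesis on connectedness and isolated vertices needed to invoke \autoref{thm:Perron–Frobenius for dihypergraphs}; if $G$ is not anadiplosis connected I would apply the argument to the component achieving the maximum in \autoref{lem:spectral components} and observe that the restriction of $B$ to that component is still parabolic consistently $\alpha$-supernormal, so the bound transfers to $G$. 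Apart from these points the argument is a routine reversal of \autoref{lem:iff}.
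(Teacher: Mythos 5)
Your proposal is correct and follows essentially the same route as the paper's proof: construct a test pair $(\bm{x}^*,\bm{y}^*)$ from the parabolic consistency condition so that equality holds in H\"older and AM--GM, and then let the two supernormality inequalities push the estimate into a lower bound on $P_G$, giving $\lambda_{p,q}(G)\geq P_G(\bm{x}^*,\bm{y}^*)\geq r^{-r/p}s^{-s/q}\alpha^{-1}$ (the reduction to an anadiplosis component via \autoref{lem:spectral components} is handled the same way). Two cosmetic remarks: the opening appeal to the Perron--Frobenius theorem is superfluous, since the test pair is built from $B$ rather than from an eigenpair; and one cannot in general normalize $\|\bm{x}^*\|_p$ and $\|\bm{y}^*\|_q$ to $1$ simultaneously (only one scaling parameter is free, and in the strictly supernormal case the identity $\sum_v (y_v^*)^q=\sum_u (x_u^*)^p$ used in \autoref{lem:iff} degenerates into inequalities pointing in opposite directions), but this is harmless --- and is a slip shared with the paper's own write-up --- because one can instead invoke the homogeneous characterization \eqref{eq:Equi-max} and divide the final bound by $\|\bm{x}^*\|_p^{r}\,\|\bm{y}^*\|_q^{s}$.
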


\begin{proof}
The parabolic consistent condition implies that there exist nonnegative vectors 
$\bm{x}=(x_1,x_2,\ldots,x_m)^{\mathrm{T}}\in\mathbb{S}^{m-1}_{p,+}$
and $\bm{y}=(y_1,y_2,\ldots,y_n)^{\mathrm{T}}\in\mathbb{S}^{n-1}_{q,+}$ 
satisfying \eqref{eq:solution}. Therefore
\begin{align*}
P_G(\bm{x},\bm{y}) 
&  \geq \frac{1}{\alpha}\sum_{e\in E(G)}\Bigg(\prod_{u\in T(e)}\big(B(u,e)\big)^{1/p}x_u\Bigg)
   \Bigg(\prod_{v\in H(e)}\big(B(v,e)\big)^{1/q}y_v\Bigg)\\
&  =\frac{1}{\alpha}\Bigg(\sum_{e\in E(G)}\prod_{u\in T(e)}\big(B(u,e)\big)^{1/r}x_u^{p/r}\Bigg)^{r/p}
   \Bigg(\sum_{e\in E(G)}\prod_{v\in H(e)}\big(B(v,e)\big)^{1/s}y_v^{q/s}\Bigg)^{s/q}\\
&  =\frac{1}{r^{r/p}s^{s/q}\alpha}\Bigg(\sum_{e\in E(G)}\sum_{u\in T(e)}B(u,e)x_u^p\Bigg)^{r/p}
   \left(\sum_{e\in E(G)}\sum_{v\in H(e)}B(v,e)y_v^q\right)^{s/q}\\
&  \geq\frac{1}{r^{r/p}s^{s/q}\alpha}\cdot ||\bm{x}||_p^{r}\cdot ||\bm{y}||_q^{s}
   =\frac{1}{r^{r/p}s^{s/q}\alpha},
\end{align*}
which implies $\lambda_{p,q}(G)\geq r^{-r/p}s^{-s/q}\alpha^{-1}$. When $G$ is 
strictly parabolic $\alpha$-supernormal, this inequality is strict, and therefore 
$\lambda_{p,q}(G)>r^{-r/p}s^{-s/q}\alpha^{-1}$.
\end{proof}

\subsection{Elliptic phase: $\frac{r}{p}+\frac{s}{q}<1$}
Given an $(r,s)$-directed hypergraph $G$, for each arc $e\in E(G)$, we put a weight 
$w(e)>0$ on $e$. We now introduce the following concepts.

\begin{definition}\label{def:elliptic normal}
An $(r,s)$-directed hypergraph $G$ is called {\em elliptic $\alpha$-normal} if $r/p+s/q<1$ 
and there exist a weighted incidence matrix $B$ and $\{w(e)\}$ satisfying
\begin{enumerate}
\item[(1)] $\displaystyle\sum_{e\in E(G)}w(e)=1$;

\item[(2)] $\displaystyle\sum_{e:\,u\in T(e)}B(u,e)=\sum_{e:\,v\in H(e)}B(v,e)=1$, 
for any $u\in T(G)$, $v\in H(G)$; 

\item[(3)] $\displaystyle w(e)^{1-(r/p+s/q)}\prod_{u\in T(e)}\big(B(u,e)\big)^{1/p}\cdot
\prod_{v\in H(e)}\big(B(v,e)\big)^{1/q}=\alpha$, for any $e\in E(G)$.
\end{enumerate}
Moreover, the weighted incidence matrix $B$ and $\{w(e)\}$ are called {\em elliptic consistent} 
if for any $u\in T(G)$, $v\in H(G)$, $e_1$,\,$\ldots$, $e_d$ and $f_1$,\,$\ldots$, $f_h$ 
are arcs contained $u$ and $v$ in tail and head, respectively,
\[
\frac{w(e_1)}{B(u,e_1)}=\cdots=\frac{w(e_d)}{B(u,e_d)},~~
\frac{w(f_1)}{B(v,f_1)}=\cdots=\frac{w(f_h)}{B(v,f_h)}.     
\]
\end{definition}
   
\begin{lemma}\label{lem:elliptic-normal}
Let $G$ be an $(r,s)$-directed hypergraph with $r/p+s/q<1$. Then the $(p,q)$-spectral radius 
of $G$ is $\lambda_{p,q}(G)$ if and only if $G$ is elliptic consistently $\alpha$-normal with 
\[
\alpha=\frac{1}{r^{r/p}s^{s/q}\lambda_{p,q}(G)}.
\]
\end{lemma}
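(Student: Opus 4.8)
The plan is to mimic the proof of the parabolic case (\autoref{lem:iff}), adapting it to the presence of the edge weights $w(e)$, which are needed precisely because in the elliptic phase $r/p+s/q<1$ and the H\"older exponents no longer sum to $1$.

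\emph{Sufficiency.} Suppose $G$ is elliptic consistently $\alpha$-normal via $B$ and $\{w(e)\}$. For nonnegative unit vectors $\bm{x}\in\mathbb{S}^{m-1}_{p,+}$, $\bm{y}\in\mathbb{S}^{n-1}_{q,+}$, I would write
\[
P_G(\bm{x},\bm{y})=\frac1\alpha\sum_{e\in E(G)} w(e)^{1-(r/p+s/q)}
\Bigg(\prod_{u\in T(e)}\big(B(u,e)\big)^{1/p}x_u\Bigg)\Bigg(\prod_{v\in H(e)}\big(B(v,e)\big)^{1/q}y_v\Bigg),
\]
then apply the generalized H\"older inequality \eqref{eq:GeqHolder} over the exponents $\alpha_1=1/(1-(r/p+s/q))$ (for the $w(e)$ factor), $\alpha_2=p/r$ (for the tail factor) and $\alpha_3=q/s$ (for the head factor), noting $1/\alpha_1+1/\alpha_2+1/\alpha_3=1$, to split the sum into three factors; the $w(e)$-factor contributes $(\sum_e w(e))^{1-(r/p+s/q)}=1$ by condition (1). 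Then, as in \autoref{lem:iff}, I apply the AM--GM inequality inside each of the remaining two factors (the tail factor bounded using $\prod_{u\in T(e)} a_u \le r^{-r}\big(\sum_{u\in T(e)} a_u^{1/?}\big)^{?}$ written out properly as in the parabolic proof) and swap the order of summation using conditions (2) to get $\big(\sum_e\sum_{u\in T(e)}B(u,e)x_u^p\big)^{r/p}=\|\bm{x}\|_p^r=1$ and likewise for $\bm{y}$. This yields $P_G(\bm{x},\bm{y})\le r^{-r/p}s^{-s/q}\alpha^{-1}$, hence $\lambda_{p,q}(G)\le r^{-r/p}s^{-s/q}\alpha^{-1}$. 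For equality I would exhibit the optimizing pair by using the consistency conditions: define $w$-scaled vectors $x_u^*,y_v^*$ along walks just as in \autoref{lem:iff} (now the consistent ratios $w(e_i)/B(u,e_i)$ replace the pure label ratios), check they satisfy the equality conditions in H\"older (all three vectors $\bm{x}^{(j)}$ proportional) and in AM--GM (the analogue of \eqref{eq:solution} with $w(e)$ inserted), and verify $\|\bm{x}^*\|_p=\|\bm{y}^*\|_q=1$.

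\emph{Necessity.} Suppose the $(p,q)$-spectral radius is $\lambda_{p,q}(G)$. Since $r/p+s/q<1$, by \autoref{thm:Perron–Frobenius for dihypergraphs}(1) (applied componentwise, or after observing each anadiplosis component contributes) there is a positive eigenpair $(\bm{x},\bm{y})$; I would reduce to the connected case or handle the general case by combining \autoref{lem:r/p+s/q<1-components} with the construction below. Define
\[
B(v,e)=\frac{Z(e)}{r\lambda_{p,q}(G)\,x_v^p}\ (v\in T(e)),\qquad
B(v,e)=\frac{Z(e)}{s\lambda_{p,q}(G)\,y_v^q}\ (v\in H(e)),
\]
with $Z(e):=\big(\prod_{u\in T(e)}x_u\big)\big(\prod_{v\in H(e)}y_v\big)$, and $w(e):=Z(e)^{1/(1-(r/p+s/q))}/\sum_{f}Z(f)^{1/(1-(r/p+s/q))}$ (normalized so $\sum_e w(e)=1$). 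The row-sum conditions (2) follow from the strong eigenequations \eqref{eq:strongcharacteristic equation} exactly as in the parabolic proof. Condition (3) is a computation: $\prod_{u\in T(e)}(B(u,e))^{1/p}=Z(e)^{r/p}/\big((r\lambda)^{r/p}\prod_{u\in T(e)}x_u\big)$ and similarly for the head, so $\prod_T B^{1/p}\prod_H B^{1/q}=Z(e)^{r/p+s/q-1}/(r^{r/p}s^{s/q}\lambda)$, and multiplying by $w(e)^{1-(r/p+s/q)}$, which is proportional to $Z(e)$, the $Z(e)$-powers cancel up to the global normalizing constant; I need to check the constant works out to exactly $\alpha=1/(r^{r/p}s^{s/q}\lambda_{p,q}(G))$, which forces the normalization of $w$ and is where I must be careful. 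Finally the consistency conditions hold because $w(e_i)/B(u,e_i)$ is, up to a constant independent of the arc, equal to $x_u^p \cdot Z(e_i)^{1/(1-(r/p+s/q))}/Z(e_i) = x_u^p Z(e_i)^{(r/p+s/q)/(1-(r/p+s/q))}$... so I would instead verify directly that $w(e)/B(u,e)$ depends only on $u$ (and similarly $w(f)/B(v,f)$ only on $v$), which is immediate from the formulas since the $Z(e)$-dependence cancels between $w(e)$ and $B(u,e)$.

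The main obstacle I anticipate is bookkeeping the exponents so that condition (3) produces exactly the stated $\alpha$ rather than some power of it — the interplay between the weight $w(e)\propto Z(e)^{1/(1-(r/p+s/q))}$, the normalization $\sum_e w(e)=1$, and the fact that $\sum_e Z(e)^{1/(1-e)}$ must be related back to $\lambda_{p,q}(G)$ (via summing the eigenequations) is the delicate point; the parabolic proof avoids this because there $w(e)$ is absent. A secondary technical point is justifying the equality analysis in the generalized H\"older inequality \eqref{eq:GeqHolder}, whose equality case is more restrictive than ordinary H\"older, so I must confirm that the constructed optimal $(\bm{x}^*,\bm{y}^*)$ genuinely makes the three vectors proportional and not merely one of them zero.
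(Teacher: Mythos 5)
Your sufficiency argument follows the paper's: generalized H\"older with exponents $\alpha_1=1/(1-(r/p+s/q))$, $\alpha_2=p/r$, $\alpha_3=q/s$, then AM--GM and the row-sum conditions, then an explicit optimizer built from the consistency ratios. Two small corrections there: since $1/\alpha_1+1/\alpha_2+1/\alpha_3=1$ exactly, the applicable inequality is \eqref{eq:Gener Holder}, not \eqref{eq:GeqHolder}; and no walks are needed to define the optimizer, because the elliptic consistency condition is local, so $x_u^*=\big(w(e)/(rB(u,e))\big)^{1/p}$ and $y_v^*=\big(w(f)/(sB(v,f))\big)^{1/q}$ are well defined directly (this is what the paper does).

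The necessity direction contains a genuine error: your weight $w(e)\propto Z(e)^{1/(1-(r/p+s/q))}$ is the wrong power of $Z(e)$. Write $\gamma=1-(r/p+s/q)$. With $B$ defined as you (and the paper) propose, one computes
\[
\prod_{u\in T(e)}\big(B(u,e)\big)^{1/p}\cdot\prod_{v\in H(e)}\big(B(v,e)\big)^{1/q}
=\frac{Z(e)^{-\gamma}}{r^{r/p}s^{s/q}\,\big(\lambda_{p,q}(G)\big)^{r/p+s/q}},
\]
so for $w(e)^{\gamma}$ times this quantity to be constant over $e$ you need $w(e)^{\gamma}\propto Z(e)^{\gamma}$, i.e.\ $w(e)\propto Z(e)$ --- not $w(e)\propto Z(e)^{1/\gamma}$. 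With your choice, $w(e)^{\gamma}\prod_{T}B^{1/p}\prod_{H}B^{1/q}\propto Z(e)\cdot Z(e)^{-\gamma}=Z(e)^{r/p+s/q}$, which is not constant, so condition (3) of \autoref{def:elliptic normal} fails for every $\alpha$; and your own intermediate formula $w(e)/B(u,e)\propto x_u^pZ(e)^{(r/p+s/q)/\gamma}$ already shows the consistency ratios still depend on $e$, contradicting your closing claim that ``the $Z(e)$-dependence cancels.'' The paper's choice is $w(e)=Z(e)/\lambda_{p,q}(G)$: then $\sum_ew(e)=1$ holds with no extra normalization because $\sum_eZ(e)=P_G(\bm{x},\bm{y})=\lambda_{p,q}(G)$, condition (3) yields exactly $\alpha=1/(r^{r/p}s^{s/q}\lambda_{p,q}(G))$, and $w(e)/B(u,e)=rx_u^p$, $w(f)/B(v,f)=sy_v^q$ depend only on the vertex, giving consistency. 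This is precisely the ``delicate point'' you flagged; the fix is this single change of exponent, and the rest of your outline then goes through.
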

   
\begin{proof}
We first show that it is necessary. By \autoref{thm:Perron–Frobenius for dihypergraphs}, 
let $\bm{x}=(x_1,x_2,\ldots,x_m)^{\mathrm{T}}\in\mathbb{S}^{m-1}_{p,++}$ and 
$\bm{y}=(y_1,y_2,\ldots,y_n)^{\mathrm{T}}\in\mathbb{S}^{n-1}_{q,++}$ be the eigenpair 
to $\lambda_{p,q}(G)$. Define a weighted incidence matrix $B=(B(v,e))$ and $\{w(e)\}$ 
as follows:
\begin{align}\label{eq:elliptic B}
B(v,e) & =\begin{dcases}
\frac{\big(\prod_{u\in T(e)}x_u\big)\big(\prod_{u\in H(e)}y_u\big)}{r\lambda_{p,q}(G) x_v^p}, 
& \text{if}~v\in T(e),\\
\frac{\big(\prod_{u\in T(e)}x_u\big)\big(\prod_{u\in H(e)}y_u\big)}{s\lambda_{p,q}(G) y_v^q}, 
& \text{if}~v\in H(e),\\
0, & \text{otherwise}.
\end{dcases}\\[2mm]\label{eq:w(e)}
w(e) & =\frac{\big(\prod_{u\in T(e)}x_u\big)\big(\prod_{u\in H(e)}y_u\big)}{\lambda_{p,q}(G)}.  
\end{align}
For any $u\in T(G)$, $v\in H(G)$, by \eqref{eq:strongcharacteristic equation} we see that
\[
\sum_{e:\,u\in T(e)}B(u,e)=
\frac{\sum_{e:\,u\in T(e)}\big(\prod_{v\in T(e)}x_v\big)\big(\prod_{v\in H(e)}y_v\big)}%
{r\lambda_{p,q}(G) x_u^p}=1
\]
and 
\[
\sum_{e:\,v\in H(e)}B(v,e)=
\frac{\sum_{e:\,v\in H(e)}\big(\prod_{u\in T(e)}x_u\big)\big(\prod_{u\in H(e)}y_u\big)}%
{s\lambda_{p,q}(G) y_v^q}=1.
\]
Also, we have
\[
\sum_{e\in E(G)}w(e)=\frac{\sum_{e\in E(G)}\big(\prod_{u\in T(e)}x_u\big)%
\big(\prod_{u\in H(e)}y_u\big)}{\lambda_{p,q}(G)}=1.    
\]
Therefore, items (1) and (2) of \autoref{def:elliptic normal} are verified. 
For the item (3), we check that
\[
w(e)^{1-(r/p+s/q)}\prod_{u\in T(e)}\big(B(u,e)\big)^{1/p}\cdot\prod_{v\in H(e)}\big(B(v,e)\big)^{1/q}
=\frac{1}{r^{r/p}s^{s/q}\lambda_{p,q}(G)}=\alpha.    
\]

To show that $B$ and $\{w(e)\}$ are consistent, for any $u\in T(G)$, $v\in H(G)$, 
according to \eqref{eq:elliptic B} and \eqref{eq:w(e)} we see that
\[
\frac{w(e_1)}{B(u,e_1)}=\cdots=\frac{w(e_d)}{B(u,e_d)}=rx_u^p,~~
\frac{w(f_1)}{B(v,f_1)}=\cdots=\frac{w(f_h)}{B(v,f_h)}=sy_v^q.   
\]
   
Now we show that it is also sufficient. Assume that $G$ is elliptic consistently $\alpha$-normal
with weighted incident matrix $B$ and $\{w(e)\}$. Denote 
\[
\alpha_1=\frac{1}{1-(r/p+s/q)},~
\alpha_2=\frac{p}{r},~
\alpha_3=\frac{q}{s}.    
\]
Clearly, $1/\alpha_1+1/\alpha_2+1/\alpha_3=1$. For any nonnegative vectors 
$\bm{x}=(x_1,x_2,\ldots,x_m)^{\mathrm{T}}\in\mathbb{S}^{m-1}_{p,+}$ and 
$\bm{y}=(y_1,y_2,\ldots,y_n)^{\mathrm{T}}\in\mathbb{S}^{n-1}_{q,+}$, by Generalized 
H\"older's inequality \eqref{eq:Gener Holder} and AM--GM inequality, we have
\begin{align*}
P_G(\bm{x},\bm{y}) 
& =\sum_{e\in E(G)}\Bigg(\prod_{u\in T(e)}x_u\Bigg)\Bigg(\prod_{v\in H(e)}y_v\Bigg)\\
& =\frac{1}{\alpha}\sum_{e\in E(G)}\Bigg(w(e)^{1-(r/p+s/q)}\cdot\prod_{u\in T(e)}\big(B(u,e)\big)^{1/p}x_u
   \cdot\prod_{v\in H(e)}\big(B(v,e)\big)^{1/q}y_v\Bigg)\\
& \leq\frac{1}{\alpha}\Bigg(\sum_{e\in E(G)}w(e)^{[1-(r/p+s/q)]\alpha_1}\Bigg)^{1/\alpha_1}
  \Bigg(\sum_{e\in E(G)}\prod_{u\in T(e)}\big(B(u,e)\big)^{\alpha_2/p}x_u^{\alpha_2}\Bigg)^{1/\alpha_2}\\
&~~~\,\times\Bigg(\sum_{e\in E(G)}\prod_{v\in H(e)}\big(B(v,e)\big)^{\alpha_3/q}y_v^{\alpha_3}\Bigg)^{1/\alpha_3}\\
& =\frac{1}{\alpha}
  \Bigg(\sum_{e\in E(G)}\prod_{u\in T(e)}\big(B(u,e)\big)^{1/r}x_u^{p/r}\Bigg)^{r/p}
  \Bigg(\sum_{e\in E(G)}
  \prod_{v\in H(e)}\big(B(v,e)\big)^{1/s}y_v^{q/s}\Bigg)^{s/q}\\
& \leq\frac{1}{r^{r/p}s^{s/q}\alpha}\Bigg(\sum_{e\in E(G)}\sum_{u\in T(e)}B(u,e)x_u^p\Bigg)^{r/p}
  \Bigg(\sum_{e\in E(G)}\sum_{v\in H(e)}B(v,e)y_v^q\Bigg)^{s/q}\\
& =\frac{1}{r^{r/p}s^{s/q}\alpha}\cdot||\bm{x}||_p^r\cdot||\bm{y}||_q^s
  =\frac{1}{r^{r/p}s^{s/q}\alpha}.
\end{align*}
This inequality implies $\lambda_{p,q}(G)\leq r^{-r/p}s^{-s/q}\alpha^{-1}$.
   
The equality holds if $G$ is elliptic $\alpha$-normal and there is a nonzero solution 
$(\bm{x},\bm{y})$ to the following equations:
\begin{equation}\label{eq:general-solution}
rB(i_1,e)x_{i_1}^p=\cdots=rB(i_r,e)x_{i_r}^p=sB(j_1,e)y_{j_1}^q=\cdots =sB(j_s,e)y_{j_s}^q=w(e) 
\end{equation}
for any $e\in E(G)$, $T(e)=\{i_1,i_2,\ldots,i_r\}$ and $H(e)=\{j_1,j_2,\ldots,j_s\}$. Assume 
$u\in T(G)$, $v\in H(G)$, and $u\in T(e)$, $v\in H(f)$ for some arcs $e$, $f\in E(G)$. Define 
\begin{equation}\label{eq:x_u^*}
x_u^*=\bigg(\frac{w(e)}{rB(u,e)}\bigg)^{1/p}
\end{equation}
and 
\begin{equation}\label{eq:y_v^*}
y_v^*=\bigg(\frac{w(f)}{sB(v,f)}\bigg)^{1/q}.    
\end{equation}
The consistent conditions guarantee that $x_u^*$ and $y_v^*$ are independent of the choice of 
the arcs $e$ and $f$. It is easy to check that $(\bm{x}^*,\bm{y}^*)$ is a solution of 
\eqref{eq:general-solution}. Equations \eqref{eq:x_u^*} and \eqref{eq:y_v^*} also imply that
\[
\begin{cases}
rB(u,e)(x_u^*)^p=w(e), & \text{if}\ u\in T(e),\\  
sB(v,f)(y_v^*)^q=w(f), & \text{if}\ v\in H(f), 
\end{cases}   
\]
from which it follows that 
\[
||\bm{x}^*||_p^p=\sum_{u\in T(G)}(x_u^*)^p=
\sum_{e\in E(G)}\sum_{u\in T(e)}B(u,e)(x_u^*)^p=
\sum_{e\in E(G)}w(e)=1 
\]
and 
\[
||\bm{y}^*||_q^q=\sum_{v\in H(G)}(y_v^*)^q=
\sum_{f\in E(G)}\sum_{v\in H(f)}B(v,f)(y_v^*)^q=
\sum_{f\in E(G)}w(f)=1.    
\]
Therefore $\lambda_{p,q}(G)=r^{-r/p}s^{-s/q}\alpha^{-1}$, completing the proof.
\end{proof}

An $(r,s)$-directed hypergraph $G$ is called an {\em out-hyperstar} 
(or {\em in-hyperstar}) if each two arcs of $G$ share the same vertex 
in the tail (or head) of each arc. The same vertex is called the {\em center} 
of $G$.

\begin{example}
Let $G$ be an out-hyperstar with $k$ arcs and $r/p+s/q<1$. We define a weighted 
incidence matrix $B$ and $\{w(e)\}$ for $G$ as follows:
\begin{align*}
	B(v,e) & =
	\begin{cases}
		1/k, & \text{if}\ v\ \text{is the center},\\
		1, & \text{else if}\ v\in e,\\
		0, & \text{otherwise}, 
	\end{cases}\\
	w(e) & =1/k.
\end{align*}
It can be checked that $G$ is elliptic consistently $\alpha$-normal with
$\alpha=k^{(r-1)/p+s/q-1}$. Therefore \[
\lambda_{p,q}(G)=\frac{k^{1-((r-1)/p+s/q)}}{r^{r/p}s^{s/q}}.
\]
In particular, if $r/p+s/q=1$, we have 
\[
\lambda_{p,q}(G)=\frac{k^{1/p}}{r^{r/p}s^{s/q}}
\]
by taking $r/p+s/q\to 1$.
Similarly, we can prove that if $G$ is an in-hyperstar with $k$ arcs, then
\[
\lambda_{p,q}(G)=
\begin{dcases}
\frac{k^{1-(r/p+(s-1)/q)}}{r^{r/p}s^{s/q}}, & \text{if}\ r/p+s/q<1,\\
\frac{k^{1/q}}{r^{r/p}s^{s/q}}, & \text{if}\ r/p+s/q=1.	
\end{dcases}
\]\end{example}

\begin{definition}\label{def:elliptic-subnormal}
An $(r,s)$-directed hypergraph $G$ is called {\em elliptic $\alpha$-subnormal} if $r/p+s/q<1$ 
and there exist a weighted incidence matrix $B$ and $\{w(e)\}$ satisfying
\begin{enumerate}
\item[(1)] $\displaystyle\sum_{e\in E(G)}w(e)\leq 1$;

\item[(2)] $\displaystyle\sum_{e:\,u\in T(e)}B(u,e)\leq 1$,~~
$\displaystyle\sum_{e:\,v\in H(e)}B(v,e)\leq 1$,
for any $u\in T(G)$, $v\in H(G)$; 
 
\item[(3)] $\displaystyle w(e)^{1-(r/p+s/q)}\prod_{u\in T(e)}\big(B(u,e)\big)^{1/p}\cdot
\prod_{v\in H(e)}\big(B(v,e)\big)^{1/q}\geq\alpha$, for any $e\in E(G)$.
\end{enumerate}
Moreover, $G$ is called {\em strictly elliptic $\alpha$-subnormal} if it is elliptic 
$\alpha$-subnormal but not elliptic $\alpha$-normal.
\end{definition}

\begin{lemma}\label{lem:elliptic-subnormal}
Let $G$ be an $(r,s)$-directed hypergraph. If $G$ is elliptic $\alpha$-subnormal, 
then the $(p,q)$-spectral radius of $G$ satisfies
\[
\lambda_{p,q}(G)\leq\frac{1}{r^{r/p}s^{s/q}\alpha}.    
\]
\end{lemma}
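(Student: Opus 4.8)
The plan is to mimic the sufficiency direction of \autoref{lem:elliptic-normal}, replacing every equality in the chain of inequalities by the appropriate inequality supplied by the definition of elliptic $\alpha$-subnormality. First I would fix arbitrary nonnegative vectors $\bm{x}=(x_1,\ldots,x_m)^{\mathrm{T}}\in\mathbb{S}^{m-1}_{p,+}$ and $\bm{y}=(y_1,\ldots,y_n)^{\mathrm{T}}\in\mathbb{S}^{n-1}_{q,+}$, and set $\alpha_1=1/(1-(r/p+s/q))$, $\alpha_2=p/r$, $\alpha_3=q/s$, so that $1/\alpha_1+1/\alpha_2+1/\alpha_3=1$. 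Using item (3) of \autoref{def:elliptic-subnormal} in the form $1\leq \alpha^{-1}w(e)^{1-(r/p+s/q)}\prod_{u\in T(e)}(B(u,e))^{1/p}\prod_{v\in H(e)}(B(v,e))^{1/q}$, I would write
\[
P_G(\bm{x},\bm{y})\leq\frac{1}{\alpha}\sum_{e\in E(G)}\Bigg(w(e)^{1-(r/p+s/q)}\prod_{u\in T(e)}\big(B(u,e)\big)^{1/p}x_u\prod_{v\in H(e)}\big(B(v,e)\big)^{1/q}y_v\Bigg).
\]

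Next I would apply the Generalized H\"older inequality \eqref{eq:Gener Holder} with exponents $\alpha_1,\alpha_2,\alpha_3$ to split the sum into a product of three factors, exactly as in \autoref{lem:elliptic-normal}; the first factor is $\big(\sum_e w(e)\big)^{1/\alpha_1}\leq 1$ by item (1), and simplifying the exponents in the other two factors collapses them to $\big(\sum_e\prod_{u\in T(e)}(B(u,e))^{1/r}x_u^{p/r}\big)^{r/p}$ and $\big(\sum_e\prod_{v\in H(e)}(B(v,e))^{1/s}y_v^{q/s}\big)^{s/q}$. Then the AM--GM inequality applied inside each product (bounding the geometric mean of the $r$, respectively $s$, terms $B(\cdot,e)x_\cdot^p$, resp.\ $B(\cdot,e)y_\cdot^q$, by their arithmetic mean divided by $r$, resp.\ $s$) yields the factor $1/(r^{r/p}s^{s/q})$ and reduces the sums to $\sum_{e}\sum_{u\in T(e)}B(u,e)x_u^p=\sum_{u\in T(G)}\big(\sum_{e:\,u\in T(e)}B(u,e)\big)x_u^p\leq\sum_{u}x_u^p=1$, using item (2), and likewise for the head sum. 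Chaining these bounds gives $P_G(\bm{x},\bm{y})\leq r^{-r/p}s^{-s/q}\alpha^{-1}$, and since $\bm{x},\bm{y}$ were arbitrary on the unit spheres, \eqref{eq:(p,q)-spectral radius} gives the claim.

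There is essentially no obstacle here: the argument is a line-for-line weakening of the sufficiency half of \autoref{lem:elliptic-normal}, with each $=$ step turned into $\leq$ by one of the three defining inequalities of elliptic $\alpha$-subnormality. The only point requiring minor care is the direction of the inequality in the Generalized H\"older step: since $w(e)>0$ and $1-(r/p+s/q)>0$, raising to these powers is monotone, so the inequality $1\le \alpha^{-1}(\cdots)$ is preserved when plugged into the sum, and \eqref{eq:Gener Holder} is applied to genuinely nonnegative arrays. Optionally, I would remark that if $G$ is strictly elliptic $\alpha$-subnormal then at least one of the three inequalities is strict, forcing $\lambda_{p,q}(G)<r^{-r/p}s^{-s/q}\alpha^{-1}$, in parallel with \autoref{lem:parabolic-subnormal}.
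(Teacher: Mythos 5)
Your proposal is correct and follows essentially the same route as the paper's proof: bound $P_G(\bm{x},\bm{y})$ using item (3), apply the Generalized H\"older inequality with exponents $\alpha_1=1/(1-(r/p+s/q))$, $\alpha_2=p/r$, $\alpha_3=q/s$ so that the $w(e)$-factor is bounded by $1$ via item (1), use AM--GM to produce the factor $r^{-r/p}s^{-s/q}$, and finish with item (2) to bound the remaining sums by $\|\bm{x}\|_p^p=\|\bm{y}\|_q^q=1$. The concluding remark about the strict case also matches the paper's closing sentence.
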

       
\begin{proof}
For any nonnegative vectors $\bm{x}=(x_1,x_2,\ldots,x_m)^{\mathrm{T}}\in\mathbb{S}_{p,+}^{m-1}$
and $\bm{y}=(y_1,y_2,\ldots,y_n)^{\mathrm{T}}\in\mathbb{S}_{q,+}^{n-1}$, by 
Generalized H\"older's inequality and AM--GM inequality, we deduce that
\begin{align*}
P_G(\bm{x},\bm{y}) 
& =\sum_{e\in E(G)}\Bigg(\prod_{u\in T(e)}x_u\Bigg)\Bigg(\prod_{v\in H(e)}y_v\Bigg)\\
& \leq\frac{1}{\alpha}\sum_{e\in E(G)}\Bigg(w(e)^{1-(r/p+s/q)}\cdot\prod_{u\in T(e)}\big(B(u,e)\big)^{1/p}x_u
  \cdot\prod_{v\in H(e)}\big(B(v,e)\big)^{1/q}y_v\Bigg)\\
& \leq\frac{1}{\alpha}
    \Bigg(\sum_{e\in E(G)}\prod_{u\in T(e)}\big(B(u,e)\big)^{1/r}x_u^{p/r}\Bigg)^{r/p}
    \Bigg(\sum_{e\in E(G)}
    \prod_{v\in H(e)}\big(B(v,e)\big)^{1/s}y_v^{q/s}\Bigg)^{s/q}\\
& \leq\frac{1}{r^{r/p}s^{s/q}\alpha}\Bigg(\sum_{e\in E(G)}\sum_{u\in T(e)}B(u,e)x_u^p\Bigg)^{r/p}
    \Bigg(\sum_{e\in E(G)}\sum_{v\in H(e)}B(v,e)y_v^q\Bigg)^{s/q}\\
& \leq\frac{1}{r^{r/p}s^{s/q}\alpha}\cdot||\bm{x}||_p^r\cdot||\bm{y}||_q^s
  =\frac{1}{r^{r/p}s^{s/q}\alpha},
\end{align*}
yielding $\lambda_{p,q}(G)\leq r^{-r/p}s^{-s/q}\alpha^{-1}$. When $G$ is strictly 
elliptic $\alpha$-subnormal, this inequality is strict, and therefore 
$\lambda_{p,q}(G)<r^{-r/p}s^{-s/q}\alpha^{-1}$.
\end{proof}

\begin{definition}
An $(r,s)$-directed hypergraph $G$ is called {\em elliptic $\alpha$-supernormal} 
if $r/p+s/q<1$ and there exist a weighted incidence matrix $B$ and $\{w(e)\}$ 
satisfying
\begin{enumerate}
\item[(1)] $\displaystyle\sum_{e\in E(G)}w(e)\geq 1$;

\item[(2)] $\displaystyle\sum_{e:\,u\in T(e)}B(u,e)\geq 1$,~~
$\displaystyle\sum_{e:\,v\in H(e)}B(v,e)\geq 1$,
for any $u\in T(G)$, $v\in H(G)$; 
 
\item[(3)] $\displaystyle w(e)^{1-(r/p+s/q)}\prod_{u\in T(e)}\big(B(u,e)\big)^{1/p}\cdot
\prod_{v\in H(e)}\big(B(v,e)\big)^{1/q}\leq\alpha$,
for any $e\in E(G)$.
\end{enumerate}
Moreover, $G$ is called {\em strictly elliptic $\alpha$-supernormal} if it is 
elliptic $\alpha$-supernormal but not elliptic $\alpha$-normal.
\end{definition}
   
\begin{lemma}\label{lem:elliptic-supernormal}
Let $G$ be an $(r,s)$-directed hypergraph. If $G$ is elliptic consistently 
$\alpha$-supernormal, then the $(p,q)$-spectral radius of $G$ satisfies
\[
\lambda_{p,q}(G)\geq\frac{1}{r^{r/p}s^{s/q}\alpha}.    
\]
\end{lemma}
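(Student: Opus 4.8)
The plan is to follow the sufficiency half of \autoref{lem:elliptic-normal} and the scheme of \autoref{lem:parabolic-supernormal}: from the witnessing data $(B,\{w(e)\})$ I will build an explicit positive test pair $(\bm{x}^*,\bm{y}^*)$, substitute it into the ratio form \eqref{eq:Equi-max} of $\lambda_{p,q}(G)$, and estimate. Concretely, for each $u\in T(G)$ pick an arc $e$ with $u\in T(e)$ and set $x_u^*$ as in \eqref{eq:x_u^*}, and for each $v\in H(G)$ pick an arc $f$ with $v\in H(f)$ and set $y_v^*$ as in \eqref{eq:y_v^*}. The elliptic consistency hypothesis is exactly what guarantees that these values do not depend on the chosen arcs, and by construction $rB(u,e)(x_u^*)^p=w(e)$ for every $u\in T(e)$ and $sB(v,f)(y_v^*)^q=w(f)$ for every $v\in H(f)$; both vectors are strictly positive because all $B(v,e),w(e)>0$, so $P_G(\bm{x}^*,\bm{y}^*)>0$ and \eqref{eq:Equi-max} applies.

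The first computation is to evaluate $P_G(\bm{x}^*,\bm{y}^*)$ arc by arc. For a fixed $e\in E(G)$, substituting \eqref{eq:x_u^*}--\eqref{eq:y_v^*} gives
\[
\prod_{u\in T(e)}x_u^*\prod_{v\in H(e)}y_v^*
=\frac{w(e)^{\,r/p+s/q}}{r^{r/p}s^{s/q}\prod_{u\in T(e)}\big(B(u,e)\big)^{1/p}\prod_{v\in H(e)}\big(B(v,e)\big)^{1/q}},
\]
and writing $w(e)^{r/p+s/q}=w(e)\cdot w(e)^{-(1-(r/p+s/q))}$ and invoking item (3) of the (super)normality bounds the right-hand side below by $w(e)/(r^{r/p}s^{s/q}\alpha)$. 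Summing over $e$ and using item (1), namely $\sum_{e\in E(G)}w(e)\geq1$, yields $P_G(\bm{x}^*,\bm{y}^*)\geq(r^{r/p}s^{s/q}\alpha)^{-1}\sum_{e\in E(G)}w(e)$.

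The second computation controls the norms. From $rB(u,e)(x_u^*)^p=w(e)$ and item (2), $\sum_{e:\,u\in T(e)}B(u,e)\geq1$, one gets $||\bm{x}^*||_p^p=\sum_{u\in T(G)}(x_u^*)^p\leq\sum_{u\in T(G)}(x_u^*)^p\sum_{e:\,u\in T(e)}B(u,e)=\sum_{e\in E(G)}\sum_{u\in T(e)}B(u,e)(x_u^*)^p=\sum_{e\in E(G)}w(e)$, and symmetrically $||\bm{y}^*||_q^q\leq\sum_{e\in E(G)}w(e)$; hence $||\bm{x}^*||_p^r||\bm{y}^*||_q^s\leq\big(\sum_{e\in E(G)}w(e)\big)^{r/p+s/q}$. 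Feeding the two estimates into \eqref{eq:Equi-max} gives $\lambda_{p,q}(G)\geq P_G(\bm{x}^*,\bm{y}^*)/(||\bm{x}^*||_p^r||\bm{y}^*||_q^s)\geq(r^{r/p}s^{s/q}\alpha)^{-1}\big(\sum_{e\in E(G)}w(e)\big)^{1-(r/p+s/q)}\geq(r^{r/p}s^{s/q}\alpha)^{-1}$, where the last inequality uses $\sum_{e\in E(G)}w(e)\geq1$ together with the fact that the exponent $1-(r/p+s/q)$ is positive. If $G$ is strictly elliptic consistently $\alpha$-supernormal, one of the three inequalities is strict (a strict item (1) pushes the power of $\sum_e w(e)$ strictly above $1$, a strict item (2) at some vertex strictly shrinks a norm, a strict item (3) at some arc strictly enlarges $P_G(\bm{x}^*,\bm{y}^*)$), so the bound becomes strict.

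Everything here is routine bookkeeping; the one point to be careful about --- and what I would flag as the ``obstacle'' --- is that $(\bm{x}^*,\bm{y}^*)$ is generally not a unit pair, so one must argue through \eqref{eq:Equi-max} rather than \eqref{eq:(p,q)-spectral radius} and carry the correction factor $\big(\sum_e w(e)\big)^{1-(r/p+s/q)}$. That factor is $\geq1$ precisely because we are in the elliptic phase $r/p+s/q<1$, so the hypothesis is used in an essential way; the identical construction in the hyperbolic phase would instead give a bound in the wrong direction. I do not anticipate any deeper difficulty.
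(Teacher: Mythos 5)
Your proof is correct and uses the same test vectors \eqref{eq:x_u^*}--\eqref{eq:y_v^*} and the same arc-by-arc estimate as the paper. The one place you diverge is actually an improvement: the paper asserts that these vectors lie in $\mathbb{S}_{p,++}^{m-1}\times\mathbb{S}_{q,++}^{n-1}$ and then writes $\lambda_{p,q}(G)\geq P_G(\bm{x},\bm{y})$ directly, but under the supernormal hypotheses one only gets $\|\bm{x}^*\|_p^p\leq\sum_e w(e)$ and $\|\bm{y}^*\|_q^q\leq\sum_e w(e)$ with $\sum_e w(e)\geq 1$, so the vectors need not be unit; your detour through the Rayleigh-quotient form \eqref{eq:Equi-max} with the correction factor $\bigl(\sum_e w(e)\bigr)^{1-(r/p+s/q)}\geq 1$ closes that gap cleanly, and your observation that this factor has the right sign only in the elliptic phase is exactly the point.
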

   
\begin{proof}
Define vectors $\bm{x}\in\mathbb{S}_{p,++}^{m-1}$ and $\bm{y}\in\mathbb{S}_{q,++}^{n-1}$ 
as follows:
\[
x_u=\bigg(\frac{w(e)}{rB(u,e)}\bigg)^{1/p},~u\in T(e);~~
y_v=\bigg(\frac{w(f)}{sB(v,f)}\bigg)^{1/q},~v\in H(f).
\]
The consistent conditions guarantee that $x_u$ and $y_v$ are independent of the 
choice of the arcs $e$ and $f$. Hence, we have 
\begin{align*}
\lambda_{p,q}(G) & \geq P_G(\bm{x},\bm{y})\\
& =
\frac{1}{r^{r/p}s^{s/q}}\sum_{e\in E(G)}
\frac{w(e)^{r/p+s/q}}{\prod\limits_{u\in T(e)}(B(u,e))^{1/p}\prod\limits_{v\in H(e)}(B(v,e))^{1/q}}\\
& \geq\frac{1}{r^{r/p}s^{s/q}\alpha}\sum_{e\in E(G)}w(e)\\
& \geq\frac{1}{r^{r/p}s^{s/q}\alpha}.
\end{align*}
When $G$ is strictly elliptic $\alpha$-supernormal, this inequality is strict, 
and therefore $\lambda_{p,q}(G)> r^{-r/p}s^{-s/q}\alpha^{-1}$.
\end{proof}

\subsection{Hyperbolic phase: $\frac{r}{p}+\frac{s}{q}>1$}

Due to the fact that the Perron--Frobenius Theorem fails for general 
$(r,s)$-directed hypergraph $G$ when $r/p+s/q>1$, the theory is less 
effective than the case $r/p+s/q\leq 1$. However, we can still
define the hyperbolic $\alpha$-normal for $r/p+s/q>1$ as
\autoref{def:elliptic normal}, and prove the following result.

\begin{theorem}
For $r/p+s/q>1$, and any $(r,s)$-directed hypergraph $G$ with $(p,q)$-spectral 
radius $\lambda_{p,q}(G)$, there exists an induced sub-dirhypergraph $G'$ of 
$\mathcal{B}(G)$ such that $G'$ is hyperbolic consistently $\alpha$-normal with 
$\alpha=(r^{r/p}s^{s/q}\lambda_{p,q}(G))^{-1}$.

Conversely, we have
\[
\lambda_{p,q}(G))=\frac{1}{r^{r/p}s^{s/q}} \max_i\big\{\alpha_i^{-1}\big\},
\]
where the maximum is taken over all $\alpha_i$ such that there is a hyperbolic 
consistent $\alpha_i$-normal labeling on some induced sub-dirhypergraph of 
$\mathcal{B}(G)$.
\end{theorem}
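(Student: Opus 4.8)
The plan is to bootstrap off the elliptic-phase results \autoref{lem:elliptic-normal} and \autoref{lem:elliptic-supernormal}: although $G$ itself need not possess a positive eigenpair when $r/p+s/q>1$, a suitable induced sub-dirhypergraph of the bipartite split $\mathcal{B}(G)$ does, and on it the constructions of Section~\ref{sec4} go through verbatim, because the sign of the exponent $1-(r/p+s/q)$ never enters an inequality --- it is only ever an exponent on a strictly positive real. The first thing I would record is the identity $\lambda_{p,q}(G)=\lambda_{p,q}(\mathcal{B}(G))$: the polynomial form $P_G(\bm{x},\bm{y})$ already assigns independent variables to the tail-role and the head-role of each vertex, so passing to $\mathcal{B}(G)$ merely relabels the arguments of the function being maximized. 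This identity also lets me pass freely between test vectors for $G$ and, after extension by zeros, test vectors for any induced sub-dirhypergraph of $\mathcal{B}(G)$.

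For the first (existence) assertion, take a nonnegative eigenpair $(\bm{x},\bm{y})$ to $\lambda_{p,q}(G)$ --- one exists by compactness, as remarked after \eqref{eq:Equi-max} --- and set $S=\{u\in T(G):x_u>0\}$, $U=\{v\in H(G):y_v>0\}$. Let $G'$ be the sub-dirhypergraph of $\mathcal{B}(G)$ induced on $S\,\dot\cup\,U$, i.e., keep exactly the arcs $e$ with $T(e)\subseteq S$ and $H(e)\subseteq U$. The deleted arcs contribute $0$ to $P_G(\bm{x},\bm{y})$, so $P_{G'}(\bm{x}|_S,\bm{y}|_U)=P_G(\bm{x},\bm{y})=\lambda_{p,q}(G)$ with $||\bm{x}|_S||_p=||\bm{y}|_U||_q=1$; combining this with $\lambda_{p,q}(G')\leq\lambda_{p,q}(\mathcal{B}(G))=\lambda_{p,q}(G)$ (monotonicity, \autoref{prop:H<G}) shows $\lambda_{p,q}(G')=\lambda_{p,q}(G)$ and that $(\bm{x}|_S,\bm{y}|_U)$ is a strictly positive eigenpair of $G'$. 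Restricting the weak eigenequations \eqref{eq:weakcharacteristic equation} of $G$ to $G'$, discarding the vanishing terms, and cancelling the positive factor $x_u$ (resp.\ $y_v$) gives the strong eigenequations \eqref{eq:strongcharacteristic equation} of $G'$; in particular every vertex of $S\cup U$ lies in some arc of $G'$, so $G'$ has no isolated vertex. This is precisely the hypothesis of the necessity half of \autoref{lem:elliptic-normal} applied to $G'$: defining $B$ and $\{w(e)\}$ by the formulas \eqref{eq:elliptic B}--\eqref{eq:w(e)} with $\lambda=\lambda_{p,q}(G')$, the verification of conditions (1)--(3) of hyperbolic $\alpha$-normality and of the consistency relations is identical to that proof, and yields $\alpha=(r^{r/p}s^{s/q}\lambda_{p,q}(G'))^{-1}=(r^{r/p}s^{s/q}\lambda_{p,q}(G))^{-1}$.

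For the maximum formula, the inequality $\frac{1}{r^{r/p}s^{s/q}}\max_i\{\alpha_i^{-1}\}\geq\lambda_{p,q}(G)$ is immediate from the existence assertion, since the labeling just built realizes $\frac{1}{r^{r/p}s^{s/q}}\alpha^{-1}=\lambda_{p,q}(G)$. For the reverse inequality, let $G''$ be any induced sub-dirhypergraph of $\mathcal{B}(G)$ carrying a hyperbolic consistently $\alpha$-normal labeling $(B,\{w(e)\})$. Following the proof of \autoref{lem:elliptic-supernormal}, set $x_u=(w(e)/(rB(u,e)))^{1/p}$ for $u\in T(e)$ and $y_v=(w(f)/(sB(v,f)))^{1/q}$ for $v\in H(f)$; consistency makes these well defined vertex by vertex, conditions (1)--(2) give $||\bm{x}||_p=||\bm{y}||_q=1$, and condition (3) collapses the sum to $P_{G''}(\bm{x},\bm{y})=\frac{1}{r^{r/p}s^{s/q}\alpha}\sum_{e\in E(G'')}w(e)=\frac{1}{r^{r/p}s^{s/q}\alpha}$. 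Extending $\bm{x},\bm{y}$ by zeros and using $\lambda_{p,q}(\mathcal{B}(G))=\lambda_{p,q}(G)$ gives $\lambda_{p,q}(G)\geq\frac{1}{r^{r/p}s^{s/q}\alpha}$; taking the maximum over all admissible $G''$ and $\alpha$ then completes the equality.

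The one place that needs genuine care rather than routine bookkeeping is the passage to $\mathcal{B}(G)$ in the existence assertion: a vertex of $G$ may be ``active'' in its tail-role but not its head-role (or vice versa) at the optimizer, so the support object is naturally an induced sub-dirhypergraph of the split, not of $G$ itself, and one must check that it inherits no isolated vertices, that its restricted pair is still optimal, and that strict positivity of that pair legitimizes the strong eigenequations on it. Everything downstream of that is a transcription of the elliptic-phase arguments, the only novelty being the harmless sign flip in $1-(r/p+s/q)$, which changes no inequality since all of $x_u$, $y_v$, $w(e)$, $B(v,e)$ are strictly positive.
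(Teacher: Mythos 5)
Your proposal is correct and follows essentially the same route as the paper: restrict to the induced sub-dirhypergraph of $\mathcal{B}(G)$ on the support of a nonnegative optimizer, verify it carries a positive eigenpair satisfying the strong eigenequations, and then transcribe the elliptic-phase constructions of \autoref{lem:elliptic-normal} and \autoref{lem:elliptic-supernormal}, noting that the sign of $1-(r/p+s/q)$ never affects any inequality. The only cosmetic difference is that you justify $\lambda_{p,q}(G')=\lambda_{p,q}(G)$ by a direct support-plus-monotonicity argument rather than by citing \autoref{lem:spectral components}, which is if anything a cleaner justification of that step.
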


\begin{proof}
For short, denote $\mathcal{B}:=\mathcal{B}(G)$. Assume that 
$(\bm{x},\bm{y})\in\mathbb{S}^{m-1}_{p,+}\times\mathbb{S}^{n-1}_{q,+}$ 
is an eigenpair corresponding to $\lambda_{p,q}(G)$. Let $S_1:=\{u\in T(G): x_u>0\}$, 
$S_2:=\{v\in H(G): y_v>0\}$. Consider the induced dirhypergraph $\mathcal{B}[S_1\cup S_2]$. 
By \autoref{lem:spectral components},
\[
\lambda_{p,q}(G)=\lambda_{p,q}(\mathcal{B})=\lambda_{p,q}(\mathcal{B}[S_1\cup S_2]).
\] 
It can be proved that $G'=\mathcal{B}[S_1\cup S_2]$ is the desired induced
sub-dirhypergraph. The proof is similar to \autoref{lem:elliptic-normal}.

Conversely, assume that $G_i$ is an induced sub-dirhypergraph of $\mathcal{B}$, 
and $\{B(v,e)\}$ and $\{w(e)\}$ are hyperbolic consistent 
$\alpha_i$-normal labeling of $G_i$. Define vectors 
$\bm{x}=(x_1,\ldots,x_m)^{\mathrm{T}}\in\mathbb{S}^{m-1}_{p,++}$,
$\bm{y}=(y_1,\ldots,y_n)^{\mathrm{T}}\in\mathbb{S}^{n-1}_{q,++}$
for $G$ as follows:
\begin{align*}
x_u & =\begin{dcases}
\bigg(\frac{w(e)}{rB(u,e)}\bigg)^{1/p}, & \text{if}\ u\in T(e),\,e\in E(G_i),\\
0, & \text{otherwise},
\end{dcases}\\
y_v & =\begin{dcases}
\bigg(\frac{w(f)}{sB(v,f)}\bigg)^{1/q}, & \text{if}\ v\in H(f),\,f\in E(G_i),\\
0, & \text{otherwise}.
\end{dcases}
\end{align*}
The consistent conditions guarantee that $x_u$ and $y_v$ are independent 
of the choice of the arcs $e$ and $f$. It follows that
\begin{align*}
\lambda_{p,q}(G)\geq P_G(\bm{x},\bm{y}) 
& =\frac{1}{r^{r/p}s^{s/q}}\sum_{e\in E(G)}
\frac{w(e)^{r/p+s/q}}{\prod\limits_{u\in T(e)}(B(u,e))^{1/p}\prod\limits_{v\in H(e)}(B(v,e))^{1/q}}\\
& =\frac{1}{r^{r/p}s^{s/q}\alpha_i}\sum_{e\in E(G)}w(e)\\
& =\frac{1}{r^{r/p}s^{s/q}\alpha_i}.
\end{align*} 
Combining with the first part of this theorem, we have
\[
\lambda_{p,q}(G))=\frac{1}{r^{r/p}s^{s/q}} \max_i\big\{\alpha_i^{-1}\big\}.
\]
The proof is completed.
\end{proof}

We also can define the hyperbolic $\alpha$-subnormal for $r/p+s/q>1$ as 
\autoref{def:parabolic-subnormal}. According to the proof of 
\autoref{lem:parabolic-subnormal} and \eqref{eq:GeqHolder}, 
we still have the following result.
 
\begin{theorem}\label{thm:hyperbolic-subnormal}
Let $G$ be an $(r,s)$-directed hypergraph. If $G$ is hyperbolic 
$\alpha$-subnormal, then the $(p,q)$-spectral radius of $G$ satisfies
\[
\lambda_{p,q}(G)\leq\frac{1}{r^{r/p}s^{s/q}\alpha}.   
\]
\end{theorem}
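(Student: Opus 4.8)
The statement to prove is Theorem~\ref{thm:hyperbolic-subnormal}: if $G$ is hyperbolic $\alpha$-subnormal (i.e.\ $r/p+s/q>1$, together with a weighted incidence matrix $B$ satisfying conditions (1)--(2) of Definition~\ref{def:parabolic-subnormal}: the row sums over $T(e)$ and $H(e)$ are at most $1$, and $\prod_{u\in T(e)}B(u,e)^{1/p}\prod_{v\in H(e)}B(v,e)^{1/q}\ge\alpha$ for every arc $e$), then $\lambda_{p,q}(G)\le \frac{1}{r^{r/p}s^{s/q}\alpha}$. The plan is to follow verbatim the proof of Lemma~\ref{lem:parabolic-subnormal}, replacing the single use of ordinary H\"older's inequality by the superadditive variant of H\"older \eqref{eq:GeqHolder} that is valid when $\sum 1/\alpha_j>1$. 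Indeed, in the parabolic case one writes $P_G(\bm x,\bm y)$ as a sum over arcs of a product of two factors and applies H\"older with exponents $p/r$ and $q/s$ (which satisfy $r/p+s/q=1$); here $r/p+s/q>1$, so those same exponents satisfy $\frac{r}{p}+\frac{s}{q}>1$ and inequality \eqref{eq:GeqHolder} applies in exactly the same direction.

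\textbf{The steps.} First I would fix nonnegative vectors $\bm x\in\mathbb{S}^{m-1}_{p,+}$, $\bm y\in\mathbb{S}^{n-1}_{q,+}$ and write
\[
P_G(\bm{x},\bm{y})
=\sum_{e\in E(G)}\Bigg(\prod_{u\in T(e)}x_u\Bigg)\Bigg(\prod_{v\in H(e)}y_v\Bigg)
\le\frac{1}{\alpha}\sum_{e\in E(G)}\Bigg(\prod_{u\in T(e)}\big(B(u,e)\big)^{1/p}x_u\Bigg)\Bigg(\prod_{v\in H(e)}\big(B(v,e)\big)^{1/q}y_v\Bigg),
\]
using condition (2) of the hyperbolic $\alpha$-subnormal definition (here $\prod B(u,e)^{1/p}\prod B(v,e)^{1/q}\ge\alpha$, so dividing each summand by this product and multiplying by $\alpha$ only increases it). Second, I would apply the superadditive H\"older inequality \eqref{eq:GeqHolder} to the right-hand sum with the two exponents $\alpha_2=p/r$ and $\alpha_3=q/s$, noting $1/\alpha_2+1/\alpha_3=r/p+s/q>1$, obtaining
\[
P_G(\bm{x},\bm{y})\le\frac{1}{\alpha}\Bigg(\sum_{e\in E(G)}\prod_{u\in T(e)}\big(B(u,e)\big)^{1/r}x_u^{p/r}\Bigg)^{r/p}\Bigg(\sum_{e\in E(G)}\prod_{v\in H(e)}\big(B(v,e)\big)^{1/s}y_v^{q/s}\Bigg)^{s/q}.
\]
Third, on each of the two inner sums I would apply the AM--GM inequality (the product of $r$, resp.\ $s$, nonnegative terms is at most the $r$-th, resp.\ $s$-th, power of their average), exactly as in the proof of Lemma~\ref{lem:parabolic-subnormal}, to pull out the factors $r^{-r/p}$ and $s^{-s/q}$ and replace the products by sums $\sum_{u\in T(e)}B(u,e)x_u^p$ and $\sum_{v\in H(e)}B(v,e)y_v^q$. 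Fourth, swapping the order of summation and using condition (1)---$\sum_{e:\,u\in T(e)}B(u,e)\le1$ and $\sum_{e:\,v\in H(e)}B(v,e)\le1$---bounds these by $\sum_u x_u^p=\|\bm x\|_p^p=1$ and $\sum_v y_v^q=\|\bm y\|_q^q=1$, giving $P_G(\bm x,\bm y)\le \frac{1}{r^{r/p}s^{s/q}\alpha}$. Taking the maximum over $(\bm x,\bm y)$ yields the claimed bound on $\lambda_{p,q}(G)$.

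\textbf{The main obstacle.} There is essentially no obstacle: the only nontrivial point is making sure inequality \eqref{eq:GeqHolder} goes in the correct direction when $\sum 1/\alpha_j>1$, which is precisely the content of item (2) in the list of inequalities recalled at the end of Section~\ref{sec2}. (By contrast, the reverse direction---a matching lower bound, as in the ``supernormal'' lemmas---would genuinely fail in the hyperbolic phase, which is why only the subnormal statement is asserted here; this is consistent with the remark that Perron--Frobenius breaks down for $r/p+s/q>1$.) I would also remark, as in Lemma~\ref{lem:parabolic-subnormal}, that if $G$ is \emph{strictly} hyperbolic $\alpha$-subnormal the inequality is strict, so $\lambda_{p,q}(G)<\frac{1}{r^{r/p}s^{s/q}\alpha}$.
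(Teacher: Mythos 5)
Your proposal is correct and is exactly the argument the paper intends: the paper proves this theorem by simply citing the proof of Lemma~\ref{lem:parabolic-subnormal} together with inequality \eqref{eq:GeqHolder}, i.e.\ rerunning the parabolic-subnormal computation with the superadditive H\"older variant for exponents whose reciprocals sum to $r/p+s/q>1$. You have filled in precisely those steps, including the correct direction of \eqref{eq:GeqHolder} and the AM--GM and row-sum bounds, so nothing is missing.
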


\section{Applications}
\label{sec5}
In this section, we shall give some applications of the $\alpha$-normal 
labeling method in the study of $(p,q)$-spectral radius. For short, we 
denote $\gamma(p,q):=1-(r/p+s/q)$ in this section.

\subsection{Some degree based bounds}

\begin{proposition}
Let $G$ be an $(r,s)$-directed hypergraph with maximum out-degree $\Delta^+$
and maximum in-degree $\Delta^-$. 
\begin{enumerate}
\item[$(1)$] If $r/p+s/q\geq 1$, then
\[
\lambda_{p,q}(G)\leq\left(\frac{\Delta^+}{r}\right)^{r/p}
\left(\frac{\Delta^-}{s}\right)^{s/q}.   
\]
\item[$(2)$] If $r/p+s/q<1$, then 
\[
\lambda_{p,q}(G)\leq 
\,|G|^{1-(r/p+s/q)}\left(\frac{\Delta^+}{r}\right)^{r/p}
\left(\frac{\Delta^-}{s}\right)^{s/q}.        
\]
\end{enumerate}
\end{proposition}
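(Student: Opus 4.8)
The plan is to obtain both inequalities as immediate applications of the $\alpha$-subnormal labeling method, using the most naive weighted incidence matrix one can write down. First I would dispose of the degenerate case $E(G)=\emptyset$: then $\lambda_{p,q}(G)=0$ and both right-hand sides vanish (interpreting $0^{\gamma}=0$ in the elliptic bound), so the statement is vacuous; hence assume $|G|\geq 1$, so that $\Delta^+\geq 1$ and $\Delta^-\geq 1$.

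For part $(1)$, I would define a weighted incidence matrix $B$ of $G$ by setting $B(u,e)=1/\Delta^+$ if $u\in T(e)$, $B(v,e)=1/\Delta^-$ if $v\in H(e)$, and $B(v,e)=0$ otherwise; this is a legitimate weighted incidence matrix because $\Delta^{\pm}\geq 1$. Then for every $u\in T(G)$ one has $\sum_{e:\,u\in T(e)}B(u,e)=d_u^+/\Delta^+\leq 1$ and likewise $\sum_{e:\,v\in H(e)}B(v,e)=d_v^-/\Delta^-\leq 1$ for $v\in H(G)$, while for each arc $e$,
\[
\prod_{u\in T(e)}\big(B(u,e)\big)^{1/p}\prod_{v\in H(e)}\big(B(v,e)\big)^{1/q}=(\Delta^+)^{-r/p}(\Delta^-)^{-s/q}=:\alpha.
\]
Thus $G$ is parabolic (when $r/p+s/q=1$) or hyperbolic (when $r/p+s/q>1$) $\alpha$-subnormal, and \autoref{lem:parabolic-subnormal} respectively \autoref{thm:hyperbolic-subnormal} gives $\lambda_{p,q}(G)\leq\frac{1}{r^{r/p}s^{s/q}\alpha}$, which equals $(\Delta^+/r)^{r/p}(\Delta^-/s)^{s/q}$ once one observes $r^{r/p}s^{s/q}\alpha^{-1}=(\Delta^+)^{r/p}(\Delta^-)^{s/q}$.

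For part $(2)$, I would keep the same $B$ and additionally put uniform arc weights $w(e)=1/|G|$, so that $\sum_{e\in E(G)}w(e)=1$. Writing $\gamma:=\gamma(p,q)=1-(r/p+s/q)>0$, for each arc $e$ one gets
\[
w(e)^{\gamma}\prod_{u\in T(e)}\big(B(u,e)\big)^{1/p}\prod_{v\in H(e)}\big(B(v,e)\big)^{1/q}=|G|^{-\gamma}(\Delta^+)^{-r/p}(\Delta^-)^{-s/q}=:\alpha,
\]
and together with the degree inequalities above this shows $G$ is elliptic $\alpha$-subnormal; \autoref{lem:elliptic-subnormal} then yields $\lambda_{p,q}(G)\leq\frac{1}{r^{r/p}s^{s/q}\alpha}=|G|^{\gamma}(\Delta^+/r)^{r/p}(\Delta^-/s)^{s/q}$. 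Honestly there is no real obstacle here: the substantive work is already encapsulated in the subnormal lemmas, and what remains is only the arithmetic bookkeeping that matches $\frac{1}{r^{r/p}s^{s/q}\alpha}$ to the advertised bound, plus the trivial verification that the chosen $B$ and $\{w(e)\}$ meet the positivity and summation requirements of a subnormal labeling. The one point worth double-checking is the boundary $r/p+s/q=1$ of the case split, where one must invoke the parabolic (not the elliptic) subnormal lemma.
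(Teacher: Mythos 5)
Your proposal is correct and is essentially identical to the paper's own proof: the same weighted incidence matrix $B(v,e)=1/\Delta^{\pm}$, the same uniform weights $w(e)=1/|G|$ in the elliptic case, and the same appeal to \autoref{lem:parabolic-subnormal}, \autoref{thm:hyperbolic-subnormal}, and \autoref{lem:elliptic-subnormal}. The only cosmetic difference is that the paper inserts a (strictly unnecessary) reduction to an anadiplosis connected component in part (1), while you instead spell out the degenerate case $E(G)=\emptyset$ and the $d_u^+/\Delta^+\leq 1$ computation explicitly.
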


\begin{proof}
(1). Assume $r/p+s/q\geq 1$. Without loss of generality, we can assume
$G$ is anadiplosis connected. Otherwise, we consider an anadiplosis 
connected component instead. Construct a weighted incidence matrix 
$B=(B(v,e))$ for $G$ as follows:
\[
B(v,e)=\begin{cases}
        1/\Delta^+, & \text{if}\ v\in T(e),\\
        1/\Delta^-, & \text{if}\ v\in H(e),\\
        0, & \text{otherwise}.
        \end{cases}
\]
For any $u\in T(G)$, $v\in H(G)$, we see that
\[
\sum_{e:\,u\in T(e)}B(u,e)\leq 1,~~
\sum_{e:\,v\in H(e)}B(v,e)\leq 1.
\]
For any arc $e\in E(G)$,
\[
\prod_{u\in T(e)}(B(u,e))^{1/p}\cdot\prod_{v\in H(e)}(B(v,e))^{1/q}\geq
\frac{1}{(\Delta^+)^{r/p}(\Delta^-)^{s/q}}.   
\]
Using \autoref{lem:parabolic-subnormal} and \autoref{thm:hyperbolic-subnormal} gives
\[
\lambda_{p,q}(G)\leq\frac{1}{r^{r/p}s^{s/q}\alpha}= 
\left(\frac{\Delta^+}{r}\right)^{r/p}\left(\frac{\Delta^-}{s}\right)^{s/q}.
\] 

(2). When $r/p+s/q<1$, we define a weighted incidence matrix $B=(B(v,e))$ and $\{w(e)\}$ 
for $G$ as follows:
\begin{align*}
B(v,e) & =\begin{cases}
        1/\Delta^+, & \text{if}\ v\in T(e),\\
        1/\Delta^-, & \text{if}\ v\in H(e),\\
        0, & \text{otherwise}.
        \end{cases}\\
w(e) & =1/|G|.
\end{align*}  
It can be checked that $G$ is elliptic $\alpha$-subnormal with 
\[
\alpha=\frac{1}{|G|^{\gamma(p,q)}(\Delta^+)^{r/p}(\Delta^-)^{s/q}}.    
\]
According to \autoref{lem:elliptic-subnormal}, we have  
\[
\lambda_{p,q}(G)\leq\frac{1}{r^{r/p}s^{s/q}\alpha}=
\,|G|^{\gamma(p,q)}\left(\frac{\Delta^+}{r}\right)^{r/p}
\left(\frac{\Delta^-}{s}\right)^{s/q}.        
\]
The proof is completed.
\end{proof}

\begin{proposition}
Let $G$ be an $(r,s)$-directed hypergraph.
\begin{enumerate}
\item[$(1)$] If $r/p+s/q\geq 1$, then
\[
\lambda_{p,q}(G)\leq\frac{1}{r^{r/p}s^{s/q}}\max_{e\in E(G)}
\Bigg\{\prod_{u\in T(e)}\big(d_u^+\big)^{1/p}\prod_{v\in H(e)}\big(d_v^-\big)^{1/q}\Bigg\}.  
\]
\item[$(2)$] If $r/p+s/q<1$, then
\[
\lambda_{p,q}(G)\leq\frac{|G|^{1-(r/p+s/q)}}{r^{r/p}s^{s/q}}\max_{e\in E(G)}
\Bigg\{\prod_{u\in T(e)}\big(d_u^+\big)^{1/p}\prod_{v\in H(e)}\big(d_v^-\big)^{1/q}\Bigg\}.  
\] 
\end{enumerate}
\end{proposition}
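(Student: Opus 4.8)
The plan is to mirror the two degree-based bounds just proven, replacing the global quantities $\Delta^+,\Delta^-$ by the per-vertex degrees $d_u^+,d_v^-$, and to use the subnormal machinery (\autoref{lem:parabolic-subnormal}, \autoref{thm:hyperbolic-subnormal} for the parabolic/hyperbolic phases, \autoref{lem:elliptic-subnormal} for the elliptic phase). First I would reduce, as in the previous proposition, to an anadiplosis connected $G$ by passing to a component. Then the key idea is to choose a weighted incidence matrix that spreads each vertex's unit mass uniformly over its incident arcs according to whether the vertex sits in a tail or a head: set
\[
B(v,e)=
\begin{cases}
1/d_v^+, & v\in T(e),\\
1/d_v^-, & v\in H(e),\\
0, & \text{otherwise}.
\end{cases}
\]
With this choice condition (1) of the subnormal definitions is satisfied with equality, since $\sum_{e:\,u\in T(e)}B(u,e)=d_u^+\cdot(1/d_u^+)=1$ and likewise for heads.

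Next I would compute the arc product that governs the value of $\alpha$. For each arc $e$ with $T(e)=\{i_1,\dots,i_r\}$, $H(e)=\{j_1,\dots,j_s\}$ we get
\[
\prod_{u\in T(e)}\big(B(u,e)\big)^{1/p}\prod_{v\in H(e)}\big(B(v,e)\big)^{1/q}
=\prod_{u\in T(e)}\big(d_u^+\big)^{-1/p}\prod_{v\in H(e)}\big(d_v^-\big)^{-1/q}.
\]
Since this depends on $e$, I take $\alpha$ to be the minimum of these quantities over all arcs, i.e.
\[
\alpha=\min_{e\in E(G)}\prod_{u\in T(e)}\big(d_u^+\big)^{-1/p}\prod_{v\in H(e)}\big(d_v^-\big)^{-1/q}
=\Big(\max_{e\in E(G)}\prod_{u\in T(e)}\big(d_u^+\big)^{1/p}\prod_{v\in H(e)}\big(d_v^-\big)^{1/q}\Big)^{-1},
\]
so that condition (2) of the subnormal definition holds for every arc. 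For case (1), with $r/p+s/q\ge 1$, $G$ is then parabolic (resp.\ hyperbolic) $\alpha$-subnormal, and \autoref{lem:parabolic-subnormal} together with \autoref{thm:hyperbolic-subnormal} gives $\lambda_{p,q}(G)\le (r^{r/p}s^{s/q}\alpha)^{-1}$, which is exactly the claimed bound after substituting the expression for $\alpha$.

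For case (2), with $r/p+s/q<1$, I additionally put the edge weights $w(e)=1/|G|$, so $\sum_{e}w(e)=1$ and condition (1) of \autoref{def:elliptic-subnormal} holds with equality. Condition (3) now reads
\[
w(e)^{1-(r/p+s/q)}\prod_{u\in T(e)}\big(B(u,e)\big)^{1/p}\prod_{v\in H(e)}\big(B(v,e)\big)^{1/q}
=|G|^{-(1-(r/p+s/q))}\prod_{u\in T(e)}\big(d_u^+\big)^{-1/p}\prod_{v\in H(e)}\big(d_v^-\big)^{-1/q},
\]
so I set $\alpha=|G|^{-\gamma(p,q)}\big(\max_{e}\prod_{u\in T(e)}(d_u^+)^{1/p}\prod_{v\in H(e)}(d_v^-)^{1/q}\big)^{-1}$ to make this $\ge\alpha$ for every arc. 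Then $G$ is elliptic $\alpha$-subnormal, and \autoref{lem:elliptic-subnormal} yields $\lambda_{p,q}(G)\le (r^{r/p}s^{s/q}\alpha)^{-1}$, which is precisely the stated bound. The argument is essentially routine once the weights are chosen; the only point requiring a moment of care is the direction of the inequality in condition (2)/(3) of the subnormal definitions — one must take $\alpha$ as the \emph{minimum} over arcs of the incidence product (equivalently the reciprocal of the maximum of the degree product), rather than an arc-by-arc value, so that a single $\alpha$ works uniformly. There is no serious obstacle beyond bookkeeping, since the Perron--Frobenius or consistency hypotheses are not needed for the subnormal (upper-bound) lemmas.
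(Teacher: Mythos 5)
Your proposal is correct and follows essentially the same route as the paper: the same weighted incidence matrix $B(v,e)=1/d_v^+$ or $1/d_v^-$, the same $\alpha$ chosen as the reciprocal of the maximal arc degree product (with the extra factor $|G|^{-\gamma(p,q)}$ and weights $w(e)=1/|G|$ in the elliptic case), and the same appeal to \autoref{lem:parabolic-subnormal}, \autoref{thm:hyperbolic-subnormal}, and \autoref{lem:elliptic-subnormal}. Your explicit formula for $\alpha$ in case (2) is in fact the corrected version of what the paper intends (the maximum belongs in the denominator), so nothing is missing.
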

            
\begin{proof}
(1). Assume $r/p+s/q\geq 1$. Without loss of generality, we can assume
$G$ is anadiplosis connected. Otherwise, we consider an anadiplosis 
connected component instead. We construct a weighted incidence matrix 
$B=(B(v,e))$ for $G$ as follows:
\[
B(v,e)=\begin{cases}
    1/d_v^+, & \text{if}\ v\in T(e),\\
    1/d_v^-, & \text{if}\ v\in H(e),\\
    0, & \text{otherwise}.
    \end{cases}
\]
Clearly, for any $u\in T(G)$, $v\in H(G)$, we see that
\[
\sum_{e:\,u\in T(e)}B(u,e)=\sum_{e:\,v\in H(e)}B(v,e)=1.    
\]
For any arc $e\in E(G)$, 
\[
\prod_{u\in T(e)}(B(u,e))^{1/p}\cdot\prod_{v\in H(e)}(B(v,e))^{1/q}\geq    
\frac{1}{\max\limits_{e\in E(G)}\bigg\{\prod\limits_{u\in T(e)}(d_u^+)^{1/p}
\prod\limits_{v\in H(e)}\big(d_v^-\big)^{1/q}\bigg\}}.
\]
By \autoref{lem:parabolic-subnormal} and \autoref{thm:hyperbolic-subnormal}, we have 
\[
\lambda_{p,q}(G)\leq\frac{1}{r^{r/p}s^{s/q}\alpha}=\frac{1}{r^{r/p}s^{s/q}}
\max_{e\in E(G)}\Bigg\{\prod_{u\in T(e)}\big(d_u^+\big)^{1/p}\prod_{v\in H(e)}\big(d_v^-\big)^{1/q}\Bigg\}.    
\]

(2). Assume $r/p+s/q<1$, we define a weighted incidence matrix $B=(B(v,e))$ and $\{w(e)\}$ 
for $G$ as follows:
\begin{align*}
B(v,e) & =\begin{cases}
    1/d_v^+, & \text{if}\ v\in T(e),\\
    1/d_v^-, & \text{if}\ v\in H(e),\\
    0, & \text{otherwise}.
    \end{cases}\\
w(e) & =1/|G|.
\end{align*}
It can be checked that $G$ is elliptic $\alpha$-subnormal with
\[
\alpha=\frac{1}{|G|^{\gamma(p,q)}}\max\limits_{e\in E(G)}
\bigg\{\prod\limits_{u\in T(e)}(d_u^+)^{1/p}\prod\limits_{v\in H(e)}\big(d_v^-\big)^{1/q}\bigg\}.  
\]
By \autoref{lem:elliptic-subnormal} we have
\[
\lambda_{p,q}(G)\leq\frac{1}{r^{r/p}s^{s/q}\alpha}
=\frac{|G|^{\gamma(p,q)}}{r^{r/p}s^{s/q}}\max_{e\in E(G)}
\Bigg\{\prod_{u\in T(e)}\big(d_u^+\big)^{1/p}\prod_{v\in H(e)}\big(d_v^-\big)^{1/q}\Bigg\}.  
\]  
The proof is completed. 
\end{proof}

\subsection{Monotonicity and convexity of $\lambda_{p,q}(G)$}

In this subsection, we consider $\lambda_{p,q}(G)$ as a function of $p$, $q$ for 
a fixed $(r,s)$-directed hypergraph $G$, and study some properties of the function 
$\lambda_{p,q}(G)$.

\begin{theorem}
Let $G$ be an $(r,s)$-directed hypergraph with $r/p+s/q<1$. Then the function 
$(r|G|)^{r/p}(s|G|)^{s/q}\lambda_{p,q}(G)$ is non-increasing in both $p$ and $q$.
\end{theorem}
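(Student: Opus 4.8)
The plan is to establish the two monotonicities separately and by the same argument; throughout, write $f(p,q):=(r|G|)^{r/p}(s|G|)^{s/q}\lambda_{p,q}(G)$ and fix $q\ge 1$. One may assume $|G|\ge 1$, since otherwise $P_G\equiv 0$, so $\lambda_{p,q}(G)\equiv 0$ and $f\equiv 0$. Let $1\le p\le p'$ with $r/p+s/q<1$; then automatically $r/p'+s/q<1$, so both problems sit in the elliptic phase. The idea is to take an optimizer of the $(p',q)$-problem, renormalise it so that it becomes admissible for the $(p,q)$-problem, and pay for the renormalisation with the Power Mean inequality~\eqref{eq:PM} --- the essential point being that the number of coordinates involved is at most $|T(G)|\le\sum_{e\in E(G)}|T(e)|=r|G|$, which is exactly what the normalising constant $(r|G|)^{r/p}$ compensates.

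Concretely, I would pick an eigenpair $(\bm{x},\bm{y})$ for $\lambda_{p',q}(G)$, so $||\bm{x}||_{p'}=||\bm{y}||_q=1$ and $P_G(\bm{x},\bm{y})=\lambda_{p',q}(G)$, and apply \eqref{eq:Equi-max} to this same pair, measuring $\bm{x}$ in the $\ell^p$-norm: since $||\bm{y}||_q=1$ and $\lambda_{p',q}(G)\ge 0$,
\[
\lambda_{p,q}(G)\ \ge\ \frac{P_G(\bm{x},\bm{y})}{||\bm{x}||_p^{\,r}\,||\bm{y}||_q^{\,s}}\ =\ \frac{\lambda_{p',q}(G)}{||\bm{x}||_p^{\,r}}.
\]
Next, applying \eqref{eq:PM} to the positive entries of $\bm{x}$ (there are at most $|T(G)|\le r|G|$ of them) with the exponents $p\le p'$ yields $||\bm{x}||_p\le(r|G|)^{1/p-1/p'}||\bm{x}||_{p'}=(r|G|)^{1/p-1/p'}$. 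Combining the two estimates gives $(r|G|)^{r/p}\lambda_{p,q}(G)\ge(r|G|)^{r/p'}\lambda_{p',q}(G)$, and multiplying both sides by the common factor $(s|G|)^{s/q}$ gives $f(p,q)\ge f(p',q)$. Running the mirror-image argument --- starting from an eigenpair for $\lambda_{p,q'}(G)$ with $q\le q'$, renormalising $\bm{y}$ in the $\ell^q$-norm, and using $|H(G)|\le\sum_{e\in E(G)}|H(e)|=s|G|$ --- shows $f$ is non-increasing in $q$ as well.

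I do not expect a genuine analytic difficulty; what needs attention is purely bookkeeping: checking that $r/p+s/q<1$ is inherited when $p$ increases (trivial), keeping \eqref{eq:Equi-max} pointed in the right direction (replacing the $\ell^{p'}$-norm of the optimizer by its $\ell^p$-norm can only weaken the lower bound, which is what we want), and --- the one load-bearing observation --- recognising that $|T(G)|\le r|G|$ and $|H(G)|\le s|G|$ are exactly the bounds that let the constants $(r|G|)^{r/p}$ and $(s|G|)^{s/q}$, rather than $m^{r/p}$ and $n^{s/q}$, absorb the change of norm. Alternatively, one could transport an elliptic consistently $\alpha$-normal labeling of $G$ at $(p,q)$ to an elliptic $\alpha'$-subnormal labeling at $(p',q)$ and appeal to \autoref{lem:elliptic-subnormal}, but the variational route above is shorter.
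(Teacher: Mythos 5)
Your argument is correct, but it takes a genuinely different route from the paper's. The paper stays inside the $\alpha$-normal labeling framework of Section 4: starting from an elliptic consistently $\alpha$-normal labeling $(B,\{w(e)\})$ for $\lambda_{p,q}(G)$, it deforms it by setting $B'(v,e)=B(v,e)^{p'/p}$ on tail entries and rescaling the arc weights to $w'(e)=w(e)^{\gamma(p,q)/\gamma(p',q)}\,|G|^{-(r/p-r/p')/\gamma(p',q)}$, verifies (using H\"older for the condition $\sum_e w'(e)\le 1$) that this is an elliptic $\alpha'$-subnormal labeling at $(p',q)$ with $\alpha'=\alpha\,|G|^{r/p'-r/p}$, and concludes from \autoref{lem:elliptic-subnormal}. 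You instead feed the $(p',q)$-optimizer directly into the Rayleigh quotient \eqref{eq:Equi-max} at exponent $p$ and pay for the change of norm with the power-mean inequality, via the observation $|T(G)|\le r|G|$; the bookkeeping checks out (one restricts \eqref{eq:PM} to the nonzero entries, uses $1/p-1/p'\ge 0$ so that $N^{1/p-1/p'}\le (r|G|)^{1/p-1/p'}$, and uses $\lambda_{p',q}(G)\ge 0$ to divide through). Your proof is shorter and strictly more general: it never invokes the hypothesis $r/p+s/q<1$, the Perron--Frobenius theorem, or the existence of a positive eigenpair, so it establishes the monotonicity for all $p,q\ge 1$, not just in the elliptic phase. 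What the paper's transport argument buys is uniformity with the rest of Section 5, where the same labeling-deformation technique is reused to prove the convexity statements, and it produces an explicit subnormal witness at $(p',q)$ as a by-product; but for this particular theorem your variational argument is the more economical one.
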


\begin{proof}
Assume that $G$ is elliptic consistently $\alpha$-normal with weighted incidence 
matrix $B$ and weights $\{w(e)\}$ for $\lambda_{p,q}(G)$. Let $p<p'$. We define 
a weighted incidence matrix $B'$ and $\{w'(e)\}$ for $\lambda_{p',q}(G)$ as follows:
\begin{align*}
B'(v,e) & =
\begin{dcases}
(B(v,e))^{p'/p}, & \text{if}\ v\in T(e),\\
B(v,e), & \text{if}\ v\in H(e),
\end{dcases}\\
w'(e) & =\frac{w(e)^{\gamma(p,q)/\gamma(p',q)}}{|G|^{\frac{r/p-r/p'}{\gamma(p',q)}}}.
\end{align*}
In what follows, we shall prove that $\{B'(v,e)\}$ and $\{w'(e)\}$ are
elliptic $\alpha'$-subnormal labeling for $\lambda_{p',q}(G)$ with 
$\alpha'=\alpha |G|^{r/p'-r/p}$.

(i). Using H\"older's inequality gives
\[
\sum_{e\in E(G)}w'(e)=|G|^{-\frac{r/p-r/p'}{\gamma(p',q)}}
\sum_{e\in E(G)} w(e)^{\gamma(p,q)/\gamma(p',q)}\leq 1.
\]

(ii). For any $u\in T(G)$ and $v\in H(G)$, we have
\[
\sum_{e:\,u\in T(e)}B'(u,e)=\sum_{e:\,u\in T(e)}(B(u,e))^{p'/p}\leq
\sum_{e:\,u\in T(e)}B(u,e)=1
\]
and 
\[
\sum_{e:\,v\in H(e)}B'(v,e)=\sum_{e:\,v\in H(e)}B(v,e)=1.
\]

(iii). For each arc $e$, we have
\begin{align*}
& w'(e)^{\gamma(p',q)}\prod_{u\in T(e)}(B'(u,e))^{1/p'}
\cdot \prod_{v\in H(e)}(B'(v,e))^{1/q}\\
= &~\frac{w(e)^{\gamma(p,q)}}{|G|^{r/p-r/p'}}
\prod_{u\in T(e)}(B(u,e))^{1/p}
\cdot \prod_{v\in H(e)}(B(v,e))^{1/q}\\
= &~\frac{\alpha}{|G|^{r/p-r/p'}}. 
\end{align*}
Hence, $G$ is elliptic $\alpha'$-subnormal for $\lambda_{p',q}(G)$ with 
$\alpha'=\alpha |G|^{r/p'-r/p}$. It follows from \autoref{lem:elliptic-subnormal}
that 
\[
\lambda_{p',q}(G)\leq\frac{1}{r^{r/p'}s^{s/q}\alpha'}=
(r|G|)^{r/p-r/p'}\lambda_{p,q}.
\]
Therefore, we obtain
\[
(r|G|)^{r/p'}\lambda_{p',q}(G)\leq (r|G|)^{r/p}\lambda_{p,q}(G).
\]
Similarly, for $q'>q$, we can prove that
\[
(s|G|)^{s/q'}\lambda_{p,q'}(G)\leq (s|G|)^{s/q}\lambda_{p,q}(G).
\]
Thus, for any $p'>q$ and $q'>q$, we have
$$(r|G|)^{r/p'}(s|G|)^{s/q'}\lambda_{p',q'}(G)
\leq (r|G|)^{r/p'}(s|G|)^{s/q}\lambda_{p',q}(G)
\leq  (r|G|)^{r/p}(s|G|)^{s/q}\lambda_{p,q}(G).$$
The proof is completed. 
\end{proof}

\begin{lemma}\label{lem:w(e) bound}
Let $G$ be an $(r,s)$-directed hypergraph with $r/p+s/q<1$.
Suppose that $G$ is elliptic consistently $\alpha$-normal with 
weights $\{w(e)\}$. Then
\[
[\alpha (\delta^+)^{r/p}(\delta^-)^{s/q}]^{1/[1-(r/p+s/q)]}\leq
w(e)\leq [\alpha (\Delta^+)^{r/p}(\Delta^-)^{s/q}]^{1/[1-(r/p+s/q)]}.
\]
\end{lemma}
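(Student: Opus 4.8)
The plan is to use elliptic consistency to eliminate the matrix $B$, rewriting each weight $w(e)$ purely in terms of the weights $\{w(e')\}$ themselves, and then to read off both inequalities by evaluating the resulting identity at the arcs of largest and of smallest weight. We may assume $E(G)\neq\emptyset$ (otherwise the statement is vacuous), and we set $w_{\min}=\min_{e\in E(G)}w(e)$ and $w_{\max}=\max_{e\in E(G)}w(e)$, both positive.

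First I fix $u\in T(G)$ and let $e_1,\dots,e_d$ with $d=d_u^+$ be the arcs having $u$ in the tail. The consistency condition of \autoref{def:elliptic normal} gives $w(e_1)/B(u,e_1)=\dots=w(e_d)/B(u,e_d)=:c_u^+$, whence $B(u,e_i)=w(e_i)/c_u^+$; then item~(2) of \autoref{def:elliptic normal}, $\sum_{i=1}^{d}B(u,e_i)=1$, forces $c_u^+=\sum_{e:\,u\in T(e)}w(e)$. Likewise, for $v\in H(G)$ put $c_v^-:=\sum_{e:\,v\in H(e)}w(e)$, so that $B(v,e)=w(e)/c_v^-$ whenever $v\in H(e)$. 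Since $\delta^+\le d_u^+\le\Delta^+$ and $\delta^-\le d_v^-\le\Delta^-$, these sums satisfy
\[
\delta^+w_{\min}\le c_u^+\le\Delta^+w_{\max},\qquad \delta^-w_{\min}\le c_v^-\le\Delta^-w_{\max}.
\]

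Next I substitute $B(u,e)=w(e)/c_u^+$ and $B(v,e)=w(e)/c_v^-$ into item~(3) of \autoref{def:elliptic normal}. Because $|T(e)|=r$ and $|H(e)|=s$, the factors $\prod_{u\in T(e)}(B(u,e))^{1/p}$ and $\prod_{v\in H(e)}(B(v,e))^{1/q}$ together contribute a power $w(e)^{r/p+s/q}$, which multiplies the leading factor $w(e)^{1-(r/p+s/q)}$ to give exactly $w(e)$. Hence, for every arc $e$,
\[
w(e)=\alpha\prod_{u\in T(e)}\big(c_u^+\big)^{1/p}\prod_{v\in H(e)}\big(c_v^-\big)^{1/q}.
\]

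Finally I estimate the right-hand side. Using the upper bounds on $c_u^+$ and $c_v^-$ gives $w(e)\le\alpha(\Delta^+)^{r/p}(\Delta^-)^{s/q}w_{\max}^{\,r/p+s/q}$ for every $e$; taking $e$ with $w(e)=w_{\max}$ and cancelling $w_{\max}^{\,r/p+s/q}$ yields $w_{\max}^{\,1-(r/p+s/q)}\le\alpha(\Delta^+)^{r/p}(\Delta^-)^{s/q}$. As $1-(r/p+s/q)>0$ in the elliptic phase, raising both sides to the power $1/[1-(r/p+s/q)]$ preserves the inequality and gives the claimed upper bound on $w_{\max}$, hence on every $w(e)$. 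Symmetrically, the lower bounds on $c_u^+,c_v^-$ give $w(e)\ge\alpha(\delta^+)^{r/p}(\delta^-)^{s/q}w_{\min}^{\,r/p+s/q}$, and evaluating at the arc of minimum weight produces $w_{\min}^{\,1-(r/p+s/q)}\ge\alpha(\delta^+)^{r/p}(\delta^-)^{s/q}$, i.e.\ the stated lower bound for all $w(e)\ge w_{\min}$. There is no real obstacle here; the only points needing care are checking that the two exponents of $w(e)$ sum to $1$ (so that $B$ disappears cleanly from item~(3)), and that $1-(r/p+s/q)>0$ keeps the final exponentiation inequality-preserving.
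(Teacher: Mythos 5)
Your proposal is correct and follows essentially the same route as the paper: both use the consistency condition together with the row-sum condition to express $B(v,e)$ as $w(e)$ divided by the sum of weights of incident arcs, substitute into the normalization identity to obtain $w(e)=\alpha\prod_{u\in T(e)}\bigl(\sum_{f:\,u\in T(f)}w(f)\bigr)^{1/p}\prod_{v\in H(e)}\bigl(\sum_{f:\,v\in H(f)}w(f)\bigr)^{1/q}$, and then evaluate at the arcs of minimum and maximum weight, using $1-(r/p+s/q)>0$ to solve for the bounds. Your write-up merely makes explicit the degree estimates and the cancellation of the exponents that the paper states tersely.
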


\begin{proof}
The consistent conditions in \autoref{def:elliptic normal} imply that
\[
B(u,e)=
\begin{dcases}
\frac{w(e)}{\sum_{f:\,u\in T(f)}w(f)}, & \text{if}\ u\in T(e),\\
\frac{w(e)}{\sum_{f:\,u\in H(f)}w(f)}, & \text{if}\ u\in H(e).
\end{dcases}
\]
By item (3) in \autoref{def:elliptic normal}, we obtain
\begin{equation}\label{eq:equality w(e)}
w(e)=\alpha\prod_{u\in T(e)}
\Bigg(\sum_{f:\,u\in T(f)}w(f)\Bigg)^{1/p}\cdot
\prod_{v\in H(e)}
\Bigg(\sum_{f:\,v\in H(f)}w(f)\Bigg)^{1/q}
\end{equation}
Without loss of generality, assume $w(e_1)=\min\{w(e): e\in E(G)\}$, and $w(e_2)=\max\{w(e): e\in E(G)\}$.
Using equation \eqref{eq:equality w(e)} gives 
\begin{align*}
w(e_1) & \geq\alpha [(\delta^+)w(e_1)]^{r/p}\cdot [(\delta^{-})w(e_1)]^{s/q}\\
& =\alpha(\delta^+)^{r/p}(\delta^-)^{s/q}\cdot w(e_1)^{r/p+s/q},
\end{align*}
which follows that
\[
w(e_1)\geq[\alpha (\delta^+)^{r/p}(\delta^-)^{s/q}]^{1/\gamma(p,q)}.
\]
Similarly, we can prove the right side.
\end{proof}

\begin{theorem}
Suppose that $G$ is an $(r,s)$-directed hypergraph with $r/p+s/q<1$. Let
\[
f_G(x):=\bigg(\Big(\frac{r}{\Delta^+}\Big)^{r/(px)}
\Big(\frac{s}{\Delta^-}\Big)^{s/(qx)}
\lambda_{px,qx}(G)\bigg)^{\frac{1}{1-(r/(px)+s/(qx))}},
\]
and
\[
g_G(x):=\bigg(\Big(\frac{r}{\delta^+}\Big)^{r/(px)}
\Big(\frac{s}{\delta^-}\Big)^{s/(qx)}
\lambda_{px,qx}(G)\bigg)^{\frac{1}{1-(r/(px)+s/(qx))}},
\]
then $f_G(x)$ is non-decreasing on $(r/p+s/q,\infty)$ while $g_G(x)$ 
is non-increasing on $(r/p+s/q,\infty)$.
\end{theorem}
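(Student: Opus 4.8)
The plan is to fix two exponents $x,x'$ with $r/p+s/q<x<x'$ and to transport the $\alpha$-normal labeling that certifies $\lambda_{px,qx}(G)$ into a super- and a sub-normal labeling that control $\lambda_{px',qx'}(G)$. Write $\theta:=x'/x>1$, $(p_x,q_x):=(px,qx)$, $(p_{x'},q_{x'}):=(px',qx')=(\theta p_x,\theta q_x)$, and set $\gamma:=1-(r/p_x+s/q_x)$, $\gamma':=1-(r/p_{x'}+s/q_{x'})$. A short computation gives $\theta\gamma'=\theta-1+\gamma$; moreover $x>r/p+s/q$ forces $p_x,q_x>1$ and $0<\gamma<\gamma'<1$, so every $(p,q)$-spectral radius below is in the elliptic phase. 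By \autoref{lem:elliptic-normal}, $G$ is elliptic consistently $\alpha$-normal for $\lambda_{p_x,q_x}(G)$ through some weighted incidence matrix $B$ and weights $\{w(e)\}$, with $\alpha=\big(r^{r/p_x}s^{s/q_x}\lambda_{p_x,q_x}(G)\big)^{-1}$.

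The key point is that this \emph{same} pair $(B,\{w(e)\})$ is simultaneously an elliptic consistently $\alpha'$-supernormal and an elliptic $\alpha''$-subnormal labeling for $\lambda_{p_{x'},q_{x'}}(G)$, with $\alpha':=\alpha^{1/\theta}(\max_{e}w(e))^{(\theta-1)/\theta}$ and $\alpha'':=\alpha^{1/\theta}(\min_{e}w(e))^{(\theta-1)/\theta}$. Indeed, conditions (1) and (2) of \autoref{def:elliptic normal} (and of its sub/super variants) hold with equality because $(B,\{w(e)\})$ is normal, and the elliptic consistency condition is literally the same for both exponent pairs; for condition (3), the identities $1/p_{x'}=1/(\theta p_x)$ and $1/q_{x'}=1/(\theta q_x)$ let one pull a power $1/\theta$ out of the $B$-product over each arc $e$, whence the original condition (3) gives
\begin{align*}
w(e)^{\gamma'}\prod_{u\in T(e)}(B(u,e))^{1/p_{x'}}\prod_{v\in H(e)}(B(v,e))^{1/q_{x'}}
&=\alpha^{1/\theta}\,w(e)^{\gamma'-\gamma/\theta}\\
&=\alpha^{1/\theta}\,w(e)^{(\theta-1)/\theta},
\end{align*}
using $\gamma'-\gamma/\theta=(\theta-1)/\theta$. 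Since $(\theta-1)/\theta>0$, the left-hand side lies in $[\alpha'',\alpha']$, exactly as the sub- and super-normal definitions demand.

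Now \autoref{lem:elliptic-supernormal} yields $\lambda_{p_{x'},q_{x'}}(G)\geq\big(r^{r/p_{x'}}s^{s/q_{x'}}\alpha'\big)^{-1}$ and \autoref{lem:elliptic-subnormal} yields $\lambda_{p_{x'},q_{x'}}(G)\leq\big(r^{r/p_{x'}}s^{s/q_{x'}}\alpha''\big)^{-1}$. To finish I would re-express these through $f_G$ and $g_G$: from the definition of $f_G$ and the value of $\alpha$ one obtains $\alpha=\big(f_G(x)^{\gamma}(\Delta^+)^{r/p_x}(\Delta^-)^{s/q_x}\big)^{-1}$ and $\alpha^{1/\theta}(\Delta^+)^{r/p_{x'}}(\Delta^-)^{s/q_{x'}}=f_G(x)^{-\gamma/\theta}$, together with the analogous pair for $g_G$ with $\delta^{\pm}$, while \autoref{lem:w(e) bound} says precisely $\max_e w(e)\leq f_G(x)^{-1}$ and $\min_e w(e)\geq g_G(x)^{-1}$. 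Plugging $\max_e w(e)\leq f_G(x)^{-1}$ into the supernormal bound and simplifying gives $f_G(x')^{\gamma'}\geq f_G(x)^{(\theta-1)/\theta}f_G(x)^{\gamma/\theta}=f_G(x)^{(\theta-1+\gamma)/\theta}=f_G(x)^{\gamma'}$, hence $f_G(x')\geq f_G(x)$ since $\gamma'>0$; symmetrically, plugging $\min_e w(e)\geq g_G(x)^{-1}$ into the subnormal bound gives $g_G(x')^{\gamma'}\leq g_G(x)^{\gamma'}$, hence $g_G(x')\leq g_G(x)$.

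The only conceptual idea is that rescaling $p$ and $q$ by a common factor $\theta$ leaves a normal labeling ``almost normal'', the whole defect being absorbed into the factor $w(e)^{(\theta-1)/\theta}$, which \autoref{lem:w(e) bound} bounds sharply in terms of $f_G$ and $g_G$. Everything else is exponent bookkeeping: one tracks powers of $r$, $s$, $\Delta^{\pm}$ (resp.\ $\delta^{\pm}$), $w(e)$ and $\alpha$ through the substitution $(p,q)\mapsto(\theta p,\theta q)$ and checks that they collapse to the single power $\gamma'$, the cancellation being forced by $\theta\gamma'=\theta-1+\gamma$. I do not anticipate any genuine obstacle.
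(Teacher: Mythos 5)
Your proposal is correct and follows essentially the same route as the paper: reuse the elliptic consistently $\alpha$-normal labeling of $\lambda_{px,qx}(G)$ unchanged for the scaled exponents $(px',qx')$, observe that the arc condition picks up the defect factor $w(e)^{(\theta-1)/\theta}$, control it via \autoref{lem:w(e) bound} (your identities $\max_e w(e)\le f_G(x)^{-1}$ and $\min_e w(e)\ge g_G(x)^{-1}$ are exactly the paper's $\Delta^{\pm}$/$\delta^{\pm}$ bounds rewritten), and then apply \autoref{lem:elliptic-supernormal} and \autoref{lem:elliptic-subnormal}. The exponent bookkeeping via $\theta\gamma'=\theta-1+\gamma$ matches the paper's computation with $\gamma(px_2,qx_2)/\gamma(px_1,qx_1)$, so there is nothing to flag.
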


\begin{proof}
For any $x_1>r/p+s/q$, let $G$ be elliptic consistently $\alpha_1$-normal 
with weighted incidence matrix $B_1$ and weights $\{w_1(e)\}$ for 
$\lambda_{px_1,qx_1}(G)$. Therefore
\[
\begin{dcases}
\sum_{e\in E(G)}w_1(e)=1,\\
\sum_{e:\,u\in T(e)}B_1(u,e)=\sum_{e:\,v\in H(e)}B_1(v,e)=1,~
u\in T(G),\,v\in H(G),\\
w_1(e)^{\gamma(px_1,qx_1)}\prod_{u\in T(e)}\big(B_1(u,e)\big)^{1/(px_1)}\cdot
\prod_{v\in H(e)}\big(B_1(v,e)\big)^{1/(qx_1)}=\alpha_1.
\end{dcases}
\]
Let $x_2>x_1$. We now define a weighted incidence matrix $B_2$ and $\{w_2(e)\}$
for $\lambda_{px_2,qx_2}(G)$ as follows:
\[
B_2(v,e)=B_1(v,e),~w_2(e)=w_1(e). 
\]
It is clear that
\[
\sum_{e\in E(G)}w_2(e)=\sum_{e\in E(G)}w_1(e)=1.
\]
We also have
\[
\sum_{e:\,u\in T(e)}B_2(u,e)=1,~~
\sum_{e:\,v\in H(e)}B_2(v,e)=1.
\]
Using \autoref{lem:elliptic-normal} gives
\begin{align*}
  &~w_2(e)^{\gamma(px_2,qx_2)}\prod_{u\in T(e)}(B_2(u,e))^{1/(px_2)}\cdot%
  \prod_{v\in H(e)}(B_2(v,e))^{1/(qx_2)}\\
= &~w_1(e)^{\gamma(px_2,qx_2)}\prod_{u\in T(e)}(B_1(u,e))^{1/(px_2)}\cdot%
  \prod_{v\in H(e)}(B_1(v,e))^{1/(qx_2)}\\
= &~w_1(e)^{1-x_1/x_2}\cdot\alpha_1^{x_1/x_2}\\
\leq &~\alpha_1^{\gamma(px_2,qx_2)/\gamma(px_1,qx_1)}
\left((\Delta^+)^{r/(px_1)}(\Delta^-)^{s/(qx_1)}\right)^{\frac{1-x_1/x_2}{\gamma(px_1,qx_1)}}.
\end{align*}
Therefore, $G$ is elliptic consistently $\alpha_2$-supernormal for 
$\lambda_{px_2,qx_2}(G)$ with 
\[
\alpha_2=\alpha_1^{\gamma(px_2,qx_2)/\gamma(px_1,qx_1)}
\left((\Delta^+)^{r/(px_1)}(\Delta^-)^{s/(qx_1)}\right)^{\frac{1-x_1/x_2}{\gamma(px_1,qx_1)}}.
\]
According to \autoref{lem:elliptic-supernormal} and 
$(\alpha_1)^{-1}=r^{r/(px_1)}s^{s/(qx_1)}\lambda_{px_1,qx_1}(G)$, 
we see
\begin{align*}
\lambda_{px_2,qx_2}(G)
& \geq\frac{1}{r^{r/(px_2)}s^{s/(qx_2)}\alpha_2}\\
& =\frac{\big[r^{r/(px_1)}s^{s/(qx_1)}\lambda_{px_1,qx_1}(G)\big]^{\frac{\gamma(px_2,qx_2)}{\gamma(px_1,qx_1)}}}%
{r^{r/(px_2)}s^{s/(qx_2)}\big[(\Delta^+)^{r/(px_1)}(\Delta^-)^{s/(qx_1)}\big]^{\frac{1-x_1/x_2}{\gamma(px_1,qx_1)}}}\\
& =\bigg[\Big(\frac{r}{\Delta^+}\Big)^{r/(px_1)}\Big(\frac{s}{\Delta^-}\Big)^{s/(qx_1)}%
   \bigg]^{\frac{\gamma(px_2,qx_2)}{\gamma(px_1,qx_1)}}
\Big(\frac{\Delta^+}{r}\Big)^{r/(px_2)}\Big(\frac{\Delta^-}{s}\Big)^{s/(qx_2)},
\end{align*}
which implies that $f_G(x)$ is non-decreasing in $x$. Similarly, 
we can prove that $g_G(x)$ is non-increasing on $(r/p+s/q,\infty)$.
\end{proof}

\begin{theorem}
For any $(r,s)$-directed hypergraph $G$ with $r/p+s/q<1$, the function 
$pq\log\,(\lambda_{p,q}(G))$ is concave upward in $p$ $($and in $q$$)$.
\end{theorem}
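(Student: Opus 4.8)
The plan is to reduce the whole statement to a single H\"older-type interpolation inequality, for which the convenient normalization is the $\ell_1$-rescaled expression for the $(p,q)$-spectral radius obtained in Section~\ref{sec3}: writing $A_e:=\prod_{u\in T(e)}x_u$ and $B_e:=\prod_{v\in H(e)}y_v$ for nonnegative $\bm{x},\bm{y}$, the substitution $x_i\to x_i^{1/p}$, $y_j\to y_j^{1/q}$ gives
\[
\lambda_{p,q}(G)=\max\Bigl\{\,\sum_{e\in E(G)}A_e^{1/p}B_e^{1/q}\ :\ \bm{x},\bm{y}\geq 0,\ ||\bm{x}||_1=||\bm{y}||_1=1\,\Bigr\}.
\]
Since $q>0$ is a constant, and since the roles of $(p,r,\bm{x})$ and $(q,s,\bm{y})$ in this expression are completely symmetric, it is enough to prove that for each fixed $q$ the function $p\mapsto p\log\lambda_{p,q}(G)$ is convex on the interval $\{p\geq 1:r/p+s/q<1\}$.

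The first step is to change variables from $p$ to $a=1/p$. I would record the elementary fact that if $h$ is convex on an interval $I\subseteq(0,\infty)$, then $p\mapsto p\,h(1/p)$ is convex on $\{1/a:a\in I\}$: given $p_0,p_1$ and $t\in(0,1)$, put $p_t=(1-t)p_0+tp_1$ and $\theta=(1-t)p_0/p_t\in(0,1)$; then $1/p_t=\theta/p_0+(1-\theta)/p_1$, so
\[
p_t\,h(1/p_t)\leq p_t\bigl[\theta\,h(1/p_0)+(1-\theta)\,h(1/p_1)\bigr]=(1-t)\,p_0\,h(1/p_0)+t\,p_1\,h(1/p_1).
\]
Applying this to $h(a):=\log\lambda_{1/a,q}(G)$, it suffices to show that $a\mapsto\log\lambda_{1/a,q}(G)$ is convex; equivalently, that
\[
\lambda_{p,q}(G)\leq\lambda_{p_0,q}(G)^{\theta}\,\lambda_{p_1,q}(G)^{1-\theta}\quad\text{whenever}\quad\frac1p=\frac{\theta}{p_0}+\frac{1-\theta}{p_1},\ \ \theta\in(0,1).
\]

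To prove this last inequality I would take a nonnegative maximizing pair $(\bm{x},\bm{y})$, $||\bm{x}||_1=||\bm{y}||_1=1$, for $\lambda_{p,q}(G)$ in the $\ell_1$-formula above. Splitting the exponents as $1/p=\theta/p_0+(1-\theta)/p_1$ and $1/q=\theta/q+(1-\theta)/q$, each term factors as
\[
A_e^{1/p}B_e^{1/q}=\bigl(A_e^{1/p_0}B_e^{1/q}\bigr)^{\theta}\bigl(A_e^{1/p_1}B_e^{1/q}\bigr)^{1-\theta},
\]
so summing over $e$ and invoking the Generalized H\"older inequality \eqref{eq:Gener Holder} with the two exponents $1/\theta$ and $1/(1-\theta)$ gives
\[
\lambda_{p,q}(G)\leq\Bigl(\sum_{e\in E(G)}A_e^{1/p_0}B_e^{1/q}\Bigr)^{\theta}\Bigl(\sum_{e\in E(G)}A_e^{1/p_1}B_e^{1/q}\Bigr)^{1-\theta}.
\]
Since $\bm{x},\bm{y}$ are still nonnegative with unit $\ell_1$-norm, they are admissible in the $\ell_1$-formula for both $\lambda_{p_0,q}(G)$ and $\lambda_{p_1,q}(G)$, so the first sum is at most $\lambda_{p_0,q}(G)$ and the second at most $\lambda_{p_1,q}(G)$. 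This is exactly the claimed interpolation inequality, which yields convexity of $p\mapsto p\log\lambda_{p,q}(G)$, hence of $pq\log\lambda_{p,q}(G)$, in $p$; the statement in $q$ follows verbatim after interchanging the two factors.

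The computation is short and I do not expect a genuine obstacle. The two points that deserve care are (i) noticing that the natural interpolation parameter is $1/p$ rather than $p$, so the little ``perspective'' step is needed to return to the convexity of $pq\log\lambda_{p,q}(G)$ as stated; and (ii) the use of the $\ell_1$-normalization, which is what makes one single pair $(\bm{x},\bm{y})$ simultaneously admissible for $\lambda_{p,q}(G)$, $\lambda_{p_0,q}(G)$ and $\lambda_{p_1,q}(G)$ and lets H\"older close the estimate. One could instead argue through elliptic $\alpha$-subnormal labelings and \autoref{lem:elliptic-subnormal}, but the direct route above is cleaner and, incidentally, does not even use the hypothesis $r/p+s/q<1$.
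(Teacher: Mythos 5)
Your proof is correct, and it takes a genuinely different route from the paper's. The paper stays entirely inside the $\alpha$-normal labeling framework: it takes elliptic consistently $\alpha_i$-normal labelings for $\lambda_{p_1,q}(G)$ and $\lambda_{p_2,q}(G)$, mixes them with carefully tuned coefficients $\mu$, $\eta=(p_1/p)\mu$, $\xi$ to produce an elliptic $\alpha$-subnormal labeling for $\lambda_{p,q}(G)$ with $\alpha^{pq}=(\alpha_1)^{p_1q\mu}(\alpha_2)^{p_2q(1-\mu)}$ via Young's inequality, and then invokes \autoref{lem:elliptic-subnormal}. That argument uses the elliptic hypothesis $r/p+s/q<1$ in an essential way, both for the existence of the consistent normal labelings (\autoref{lem:elliptic-normal}, which rests on the Perron--Frobenius theorem) and for the subnormal comparison. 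You replace all of this with a short H\"older interpolation on the $\ell_1$-renormalized variational formula, which yields $\lambda_{p,q}(G)\leq\lambda_{p_0,q}(G)^{\theta}\lambda_{p_1,q}(G)^{1-\theta}$ when $1/p=\theta/p_0+(1-\theta)/p_1$ --- i.e.\ log-convexity of $\lambda_{p,q}(G)$ in $1/p$ for fixed $q$, which is precisely the one-variable case of the paper's later theorem on $h_G(1/p,1/q)$ --- and then the perspective transform $h\mapsto p\,h(1/p)$ converts this into convexity of $pq\log(\lambda_{p,q}(G))$ in $p$. Your bookkeeping checks out: the split $\theta/q+(1-\theta)/q=1/q$ keeps the $B_e$-exponent fixed, a single nonnegative $\ell_1$-normalized maximizer is simultaneously admissible for all three spectral radii, and the perspective identity $1/p_t=\theta/p_0+(1-\theta)/p_1$ with $\theta=(1-t)p_0/p_t$ is correct. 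What you gain is elementarity (no labelings, no positive eigenpairs) and generality (the conclusion holds for all $p,q\geq1$, not only in the elliptic phase), plus the observation that this theorem and the $h_G$ theorem are the same inequality viewed through the perspective map; what the paper's route buys is a further illustration of the labeling calculus it is promoting, together with explicit subnormal witnesses that could be examined for the equality case.
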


\begin{proof}
For any $p_1<p<p_2$, write $p=\mu p_1+(1-\mu)p_2$, where $\mu=(p_2-p)/(p_2-p_1)$.
Let $G$ be elliptic consistently $\alpha_i$-normal with weighted incident matrix 
$B_i$ and $\{w_i(e)\}$ for $\lambda_{p_i,q}(G)$, $i=1$, $2$. 

We define a weighted incidence matrix $B$ and $\{w(e)\}$ for $\lambda_{p,q}(G)$ 
as follows:
\begin{align*}
B(u,e) & =\mu B_1(u,e)+(1-\mu)B_2(u,e),~\text{if}\ u\in T(e),\\
B(v,e) & =\eta B_1(v,e)+(1-\eta) B_2(v,e),~\text{if}\ v\in H(e),\\
w(e) & =\xi w_1(e)+(1-\xi)w_2(e),
\end{align*}
where
\[
\eta=\frac{p_1}{p}\mu,~\xi=\frac{p_1q-(rq+sp_1)}{pq-(rq+sp)}\mu.  
\]
For any vertices $u\in T(G)$, $v\in H(G)$, we have
\begin{align*}
\sum_{e:\,u\in T(e)}B(u,e) & =\mu\sum_{e:\,u\in T(e)}B_1(u,e)
+(1-\mu)\sum_{e:\,u\in T(e)}B_2(u,e)\\
& =\mu+(1-\mu)=1.
\end{align*}
Also, we have
\begin{align*}
\sum_{e:\,v\in H(e)}B(v,e) & =\eta\sum_{e:\,v\in H(e)}B_1(u,e)
+(1-\eta)\sum_{e:\,v\in H(e)}B_2(v,e)\\
& =\eta+(1-\eta)=1.
\end{align*}
For each arc $e\in E(G)$, it follows from Young's inequality that
\begin{align*}
&~\Bigg(w(e)^{1-(r/p+s/q)}\prod_{u\in T(e)}(B(u,e))^{1/p}\prod_{v\in H(e)}(B(v,e))^{1/q}\Bigg)^{pq}\\
\geq &~w_1(e)^{\xi[pq-(rq+sp)]}\prod_{u\in T(e)}(B_1(u,e))^{\mu q}
\prod_{v\in H(e)}(B_1(v,e))^{\eta p}\\
& \times w_2(e)^{(1-\xi)[pq-(rq+sp)]}\prod_{u\in T(e)}(B_2(u,e))^{(1-\mu) q}
\prod_{v\in H(e)}(B_2(v,e))^{(1-\eta) p}\\
= &~(\alpha_1)^{p_1q\mu}(\alpha_2)^{p_2q(1-\mu)}.  
\end{align*}
Hence, $G$ is elliptic $\alpha$-subnormal for $\lambda_{p,q}(G)$ with
$\alpha^{pq}=(\alpha_1)^{p_1q\mu}(\alpha_2)^{p_2q(1-\mu)}$. Using 
\autoref{lem:elliptic-subnormal} gives
\begin{align*}
pq\log\,(\lambda_{p,q}(G)) & \leq -\log\,(r^{rq}s^{sp}\alpha^{pq})\\
& =-\log\,\big(r^{rq}s^{sp}\alpha_1^{p_1q\mu}\alpha_2^{p_2q(1-\mu)}\big)\\
& =-\log\,(r^{rq}s^{sp})-p_1q\mu\log \alpha_1-p_2q(1-\mu)\log\alpha_2\\
& =\mu p_1q\log\,(\lambda_{p_1,q}(G))+(1-\mu)p_2q\log\,(\lambda_{p_2,q}(G)),
\end{align*}
which implies that the function $pq\log\,(\lambda_{p,q}(G))$ is concave 
upward in $p$. Similarly, we can prove that $pq\log\,(\lambda_{p,q}(G))$ 
is also concave upward in $q$.
\end{proof}

\begin{theorem}
For any $(r,s)$-directed hypergraph $G$ with $r/p+s/q<1$, the function
\[
h_G(1/p,1/q):=\log\big(\lambda_{p,q}(G)\big)
\]
is concave upward in $1/p$ and $1/q$.
\end{theorem}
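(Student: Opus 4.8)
The plan is to sidestep the $\alpha$-normal labeling machinery entirely and argue directly from the variational description of $\lambda_{p,q}(G)$. Recall from the opening of Section~\ref{sec3} that, after the substitutions $x_i\mapsto x_i^{1/p}$ and $y_j\mapsto y_j^{1/q}$, one has
\[
\lambda_{p,q}(G)=\max_{\substack{\bm{x},\bm{y}\ge 0\\ ||\bm{x}||_1=||\bm{y}||_1=1}}\ \sum_{\substack{e\in E(G),\,T(e)=\{i_1,\ldots,i_r\}\\ H(e)=\{j_1,\ldots,j_s\}}}\big(x_{i_1}\cdots x_{i_r}\big)^{1/p}\big(y_{j_1}\cdots y_{j_s}\big)^{1/q},
\]
where now $\bm{x}$ and $\bm{y}$ range over the compact probability simplices in $\R^m$ and $\R^n$. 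Setting $a=1/p$ and $b=1/q$, the claim is that $(a,b)\mapsto\log\lambda_{p,q}(G)$ is convex (``concave upward'') on the convex region $\{(a,b): 0<a,b\le 1,\ ra+sb<1\}$; joint convexity here of course implies convexity in $1/p$ alone and in $1/q$ alone.

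First I would fix a pair $(\bm{x},\bm{y})$ of simplex vectors and examine a single arc term $t_e(a,b):=\big(\prod_{i\in T(e)}x_i\big)^{a}\big(\prod_{j\in H(e)}y_j\big)^{b}=\exp\!\big(a\,\ell_T(e)+b\,\ell_H(e)\big)$, where $\ell_T(e)=\log\prod_{i\in T(e)}x_i$ and $\ell_H(e)=\log\prod_{j\in H(e)}y_j$ lie in $[-\infty,0]$. Each $t_e$ is thus the exponential of an affine function of $(a,b)$ (equal to $0$ identically in $(a,b)$ when $\prod_{i\in T(e)}x_i$ or $\prod_{j\in H(e)}y_j$ vanishes), hence a log-affine, in particular log-convex, function of $(a,b)$.

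The one lemma I would record is that a finite sum of log-convex functions is log-convex: if $f_e\big(\mu z_1+(1-\mu)z_2\big)\le f_e(z_1)^{\mu}f_e(z_2)^{1-\mu}$ for every $e$ and every $\mu\in(0,1)$, then summing over $e$ and applying the generalized H\"older inequality~\eqref{eq:Gener Holder} with the two exponents $1/\mu$ and $1/(1-\mu)$ yields
\[
\sum_{e}f_e\big(\mu z_1+(1-\mu)z_2\big)\le\Bigg(\sum_e f_e(z_1)\Bigg)^{\!\mu}\Bigg(\sum_e f_e(z_2)\Bigg)^{\!1-\mu}.
\]
Applying this with $z=(a,b)$ and $f_e=t_e$ shows that for every fixed $(\bm{x},\bm{y})$ the map $(a,b)\mapsto\log\!\big(\sum_{e\in E(G)}t_e(a,b)\big)$ is convex (this is just the familiar convexity of the log-sum-exp function precomposed with affine maps, and remains valid, with value $-\infty$, when every term vanishes).

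Finally I would pass to the supremum over the simplices. Because $\log$ is increasing, $\log\lambda_{p,q}(G)=\sup_{\bm{x},\bm{y}}\log\!\big(\sum_{e}t_e(a,b)\big)$, a pointwise supremum of a family of convex functions of $(a,b)$, hence convex; compactness of the simplices together with continuity of $P_G$ guarantees this supremum is attained and finite, so the degenerate value $-\infty$ never occurs at a point of the domain. This establishes the theorem. The only step that is not completely routine is the ``sum of log-convex is log-convex'' assertion, and it is disposed of in one line by the H\"older inequality~\eqref{eq:Gener Holder} already available in Section~\ref{sec2}; I note in passing that the hypothesis $r/p+s/q<1$ is not actually used in the argument and merely places us in the elliptic regime of this section.
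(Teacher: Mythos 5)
Your argument is correct, and it takes a genuinely different route from the paper. The paper proves this theorem with the machinery of Section~\ref{sec4}: it invokes \autoref{lem:elliptic-normal} to obtain elliptic consistently $\alpha_i$-normal labelings $(B_i,\{w_i(e)\})$ at the two endpoints $(1/p_1,1/q_1)$ and $(1/p_2,1/q_2)$, forms suitable convex combinations $B=\mu_\bullet B_1+(1-\mu_\bullet)B_2$, $w=\xi w_1+(1-\xi)w_2$ (with separate mixing parameters $\mu_1,\mu_2,\xi$ chosen so the normalization conditions survive), checks via Young's inequality that this makes $G$ elliptic $\alpha$-subnormal with $\alpha=\alpha_1^{\mu}\alpha_2^{1-\mu}$, and concludes by \autoref{lem:elliptic-subnormal}. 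You instead work directly with the variational formula from the opening of Section~\ref{sec3}: for each fixed pair of simplex vectors the objective is a finite sum of exponentials of affine functions of $(1/p,1/q)$, hence log-convex by the one-line H\"older argument (log-sum-exp convexity), and a pointwise supremum of convex functions is convex; the passage to nonnegative vectors needed for the change of variables is legitimate since the maximum of $P_G$ is attained on nonnegative vectors. What each approach buys: yours is more elementary (no Perron--Frobenius input, no existence of consistent labelings), proves genuine joint convexity transparently, and—as you note—does not use $r/p+s/q<1$ at all, so it extends the statement to the parabolic and hyperbolic phases, where the paper's proof would stumble because \autoref{lem:elliptic-normal} is only available in the elliptic phase; the paper's proof, on the other hand, stays within the labeling toolkit that drives all the other monotonicity and convexity results of Section~\ref{sec5} and exhibits explicitly how optimal labelings interpolate, which is informative if one wants to analyze when equality holds. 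The only points to state carefully in a final write-up are the trivial degenerate cases (arcs whose coordinate products vanish contribute identically zero terms, and the empty-arc-set case), which you already handle correctly.
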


\begin{proof}
According to \autoref{lem:elliptic-normal}, let $G$ be elliptic consistently 
$\alpha_i$-normal with weighted incident matrix $B_i$ and $\{w_i(e)\}$ for
$\lambda_{p_i,q_i}(G)$, where $(\alpha_i)^{-1}=r^{r/p_i}s^{s/q_i}\lambda_{p_i,q_i}(G)$,
$i=1$, $2$.

For any $(1/p,1/q)$, write 
\[
\bigg(\frac{1}{p},\frac{1}{q}\bigg)=
\mu\bigg(\frac{1}{p_1},\frac{1}{q_1}\bigg)+
(1-\mu)\bigg(\frac{1}{p_2},\frac{1}{q_2}\bigg),
\]
where 
\[
\mu=\frac{p_1(p_2-p)}{p(p_2-p_1)}=
\frac{q_1(q_2-q)}{q(q_2-q_1)}.
\]
Furthermore, let 
\[
\mu_1=\frac{p}{p_1}\mu,~\mu_2=\frac{q}{q_1}\mu,~
\xi=\frac{\gamma(p_1,q_1)}{\gamma(p,q)}\mu.
\]
We define a weighted incidence matrix $B$ and $\{w(e)\}$ for
$\lambda_{p,q}(G)$ as follows:
\begin{align*}
B(v,e) & =
\begin{cases}
\mu_1B_1(v,e)+(1-\mu_1)B_2(v,e), & \text{if}\ v\in T(e),\\
\mu_2B_1(v,e)+(1-\mu_2)B_2(v,e), & \text{if}\ v\in H(e),
\end{cases}\\
w(e) & =\xi w_1(e)+(1-\xi)w_2(e).
\end{align*}
It can be checked that $G$ is elliptic $\alpha$-subnormal for $\lambda_{p,q}(G)$ with 
$\alpha=(\alpha_1)^{\mu}(\alpha_2)^{1-\mu}$. By \autoref{lem:elliptic-subnormal},
we have
\begin{align*}
\log\big(\lambda_{p,q}(G)\big)
=    &~\log\big(r^{r/p}s^{s/q}\lambda_{p,q}(G)\big) -\frac{r\log r}{p} -\frac{s\log s}{q}\\
\leq &~-\log\alpha -\frac{r\log r}{p} -\frac{s\log s}{q}\\
=    &~-(\mu\log\alpha_1+(1-\mu)\log\alpha_2)-\frac{r\log r}{p} -\frac{s\log s}{q}\\
=    &~\mu\log\big(r^{r/p_1}s^{s/q_1}\lambda_{p_1,q_1}(G)\big)+
      (1-\mu)\log\big(r^{r/p_2}s^{s/q_2}\lambda_{p_2,q_2}(G)\big)\\
    &~-\frac{r\log r}{p} -\frac{s\log s}{q}\\
=   &~\mu\log\big(\lambda_{p_1,q_1}(G)\big)+
     (1-\mu)\log\big(\lambda_{p_2,q_2}(G)\big).   
\end{align*}
Thus the function $h_G(1/p,1/q)$ is concave upward in $1/p$ and $1/q$.
\end{proof}

\begin{corollary}
For any $(r,s)$-directed hypergraph $G$ with $r/p+s/q<1$, the function
$\log\big(\lambda_{px,qx}(G)\big)$ is concave upward in $1/x$ on the interval $(r/p+s/q,\infty)$.
\end{corollary}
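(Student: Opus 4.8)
The plan is to obtain this corollary as an immediate consequence of the preceding theorem together with the elementary fact that the restriction of a convex function to a line is convex. The first point to record is that the proof of that theorem does not merely establish separate convexity in each coordinate: it writes an arbitrary point $(1/p,1/q)$ as a genuine convex combination $\mu(1/p_1,1/q_1)+(1-\mu)(1/p_2,1/q_2)$ of two other points of the domain and derives $\log\lambda_{p,q}(G)\le\mu\log\lambda_{p_1,q_1}(G)+(1-\mu)\log\lambda_{p_2,q_2}(G)$. Hence $h_G(1/p,1/q)=\log\lambda_{p,q}(G)$ is \emph{jointly} concave upward on the region $D=\{(1/p,1/q):1/p>0,\ 1/q>0,\ r/p+s/q<1\}$, and it is this joint form that the corollary needs.

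Next I would do the domain bookkeeping. Setting $t=1/x$, the point $(1/(px),1/(qx))$ equals $t\,(1/p,1/q)$, so as $x$ runs over $(r/p+s/q,\infty)$ this point runs over the part of the ray from the origin through $(1/p,1/q)$ that lies in $D$; indeed the single constraint $r/(px)+s/(qx)<1$ is equivalent to $x>r/p+s/q$, i.e.\ $t\in\big(0,(r/p+s/q)^{-1}\big)$. Thus $\log\lambda_{px,qx}(G)$ is exactly the restriction of $h_G$ to this ray, reparametrized by $t=1/x$. Since $u\mapsto u(1/p,1/q)$ is linear, for $t_1,t_2\in\big(0,(r/p+s/q)^{-1}\big)$, $\mu\in[0,1]$, $t=\mu t_1+(1-\mu)t_2$ and $x_i=1/t_i$ we have
\[
\Big(\tfrac{1}{px},\tfrac{1}{qx}\Big)=\mu\Big(\tfrac{1}{px_1},\tfrac{1}{qx_1}\Big)+(1-\mu)\Big(\tfrac{1}{px_2},\tfrac{1}{qx_2}\Big),
\]
and applying joint concavity upward of $h_G$ yields
\[
\log\lambda_{px,qx}(G)\le\mu\log\lambda_{px_1,qx_1}(G)+(1-\mu)\log\lambda_{px_2,qx_2}(G),
\]
which is precisely the claimed concavity upward in $1/x$ on $(r/p+s/q,\infty)$.

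There is no serious obstacle here: the only things to be careful about are that the truncated ray $\{u(1/p,1/q):0<u<(r/p+s/q)^{-1}\}$ stays inside $D$ (handled by the equivalence above) and that the theorem is quoted in its joint, not merely coordinatewise, form. If a fully self-contained argument is preferred, one can instead repeat the labeling construction from the proof of the theorem with $p_i$ replaced by $px_i$ and $q_i$ by $qx_i$, but invoking joint concavity upward of $h_G$ is the cleaner route.
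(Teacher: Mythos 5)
Your argument is correct and is essentially the paper's intended route: the corollary is stated as an immediate consequence of the preceding theorem, whose proof (via the common coefficient $\mu$ for both coordinates) indeed establishes joint concavity upward of $h_G(1/p,1/q)$, and restricting to the ray $t\mapsto t\,(1/p,1/q)$ with $t=1/x$, which stays in the elliptic region exactly for $x>r/p+s/q$, gives the claim. Your explicit check of the domain and of the joint (rather than merely coordinatewise) form of the theorem is exactly the bookkeeping needed.
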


\begin{theorem}
Let $G$ be an $(r,s)$-directed hypergraph and $r/p+s/q<1$.
Then the function $x\log\,(\lambda_{px,qx}(G))$ is concave upward
in $x$ on the interval $(r/p+s/q,\infty)$.
\end{theorem}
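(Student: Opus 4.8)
The plan is to derive the statement from the corollary immediately preceding it, which asserts that $\log\big(\lambda_{px,qx}(G)\big)$ is convex in $1/x$ on $(r/p+s/q,\infty)$; granting that, the present theorem is just the observation that the \emph{perspective} of a convex function is convex. Concretely, put $F(t):=\log\big(\lambda_{p/t,\,q/t}(G)\big)$, so that by the corollary $F$ is convex on the interval $t\in\big(0,\,(r/p+s/q)^{-1}\big)$, and the function under study is
\[
\psi(x):=x\,\log\big(\lambda_{px,qx}(G)\big)=x\,F\!\big(\tfrac1x\big),\qquad x\in(r/p+s/q,\infty).
\]
Thus it suffices to show that $x\mapsto xF(1/x)$ is convex whenever $F$ is.

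First I would fix $x_1<x_2$ in $(r/p+s/q,\infty)$ and $\mu\in(0,1)$, and set $x=\mu x_1+(1-\mu)x_2$, which again lies in the interval. The key elementary point is the barycentric identity
\[
\frac1x=\lambda\cdot\frac1{x_1}+(1-\lambda)\cdot\frac1{x_2},\qquad\lambda:=\frac{\mu x_1}{x}\in(0,1),
\]
which is immediate from $x-\mu x_1=(1-\mu)x_2$. Applying convexity of $F$ from the corollary at this convex combination gives
\[
\log\big(\lambda_{px,qx}(G)\big)=F\!\big(\tfrac1x\big)\le\lambda\,F\!\big(\tfrac1{x_1}\big)+(1-\lambda)\,F\!\big(\tfrac1{x_2}\big),
\]
and multiplying through by $x>0$, using $x\lambda=\mu x_1$ and $x(1-\lambda)=(1-\mu)x_2$, yields
\[
x\,\log\big(\lambda_{px,qx}(G)\big)\le\mu\big(x_1\log\lambda_{px_1,qx_1}(G)\big)+(1-\mu)\big(x_2\log\lambda_{px_2,qx_2}(G)\big),
\]
which is precisely $\psi\big(\mu x_1+(1-\mu)x_2\big)\le\mu\,\psi(x_1)+(1-\mu)\,\psi(x_2)$, the desired convexity, finishing the proof.

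There is no serious obstacle on this route; the only thing to watch is that the mixing weight expressing $1/x$ as a convex combination of $1/x_1$ and $1/x_2$ is $\lambda=\mu x_1/x$, not $\mu$. If one prefers an argument in the same style as the theorems above, without invoking the corollary, one can do it directly: take elliptic consistently $\alpha_i$-normal labelings $(B_i,\{w_i(e)\})$ for $\lambda_{px_i,qx_i}(G)$, $i=1,2$ (via \autoref{lem:elliptic-normal}), set $B(v,e)=\mu B_1(v,e)+(1-\mu)B_2(v,e)$ for every incident $v$ and $w(e)=\xi w_1(e)+(1-\xi)w_2(e)$ with $\xi=\lambda\,\gamma(px_1,qx_1)/\gamma(px,qx)$. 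The vertex-sum conditions of \autoref{def:elliptic normal} survive automatically, and AM--GM applied termwise to item~(3) shows $G$ is elliptic $\alpha$-subnormal for $\lambda_{px,qx}(G)$ with $\alpha=\alpha_1^{\lambda}\alpha_2^{1-\lambda}$ (here one uses $\mu/(px)=\lambda/(px_1)$, $(1-\mu)/(px)=(1-\lambda)/(px_2)$, and $\gamma(px,qx)=\lambda\gamma(px_1,qx_1)+(1-\lambda)\gamma(px_2,qx_2)$, all consequences of the same barycentric identity). Then \autoref{lem:elliptic-subnormal}, after taking logarithms and multiplying by $x$, gives the same inequality; the only mild care needed in this second route is exactly matching those exponents.
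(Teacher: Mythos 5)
Your proposal is correct, and your main route is genuinely different from the paper's. The paper proves the theorem directly inside the labeling framework: it takes elliptic consistently $\alpha_i$-normal labelings for $\lambda_{px_i,qx_i}(G)$, forms the convex combinations $B=\mu B_1+(1-\mu)B_2$ and $w=\xi w_1+(1-\xi)w_2$ with $\xi=\mu x_1\gamma(px_1,qx_1)/\bigl(x-(r/p+s/q)\bigr)$, applies Young's inequality to verify condition (3), and invokes \autoref{lem:elliptic-subnormal} to get $\alpha^x=\alpha_1^{\mu x_1}\alpha_2^{(1-\mu)x_2}$ and hence the inequality $x\log\lambda_{px,qx}(G)\le\mu x_1\log\lambda_{px_1,qx_1}(G)+(1-\mu)x_2\log\lambda_{px_2,qx_2}(G)$ --- exactly the inequality you arrive at. You instead deduce the theorem formally from the corollary immediately preceding it (convexity of $\log\lambda_{px,qx}(G)$ in $1/x$) via the perspective transformation $x\mapsto xF(1/x)$, and your barycentric bookkeeping ($\lambda=\mu x_1/x$, $x\lambda=\mu x_1$, $x(1-\lambda)=(1-\mu)x_2$) is right; this is shorter and purely elementary, but it leans on that corollary, which the paper states without proof as a consequence of the joint-convexity theorem for $h_G(1/p,1/q)$, so the labeling machinery is still doing the work one level up. Your fallback sketch is essentially the paper's proof verbatim: your $\xi=\lambda\gamma(px_1,qx_1)/\gamma(px,qx)$ coincides with the paper's choice because $x-(r/p+s/q)=x\,\gamma(px,qx)$, and your $\alpha=\alpha_1^{\lambda}\alpha_2^{1-\lambda}$ is the paper's $\alpha$ after taking $x$-th roots; the exponent identities you list ($\mu/(px)=\lambda/(px_1)$, $(1-\mu)/(px)=(1-\lambda)/(px_2)$, and $\gamma(px,qx)=\lambda\gamma(px_1,qx_1)+(1-\lambda)\gamma(px_2,qx_2)$) are precisely what makes the Young-inequality step close. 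In short: the first route buys brevity at the cost of citing the unproved corollary; the second is self-contained within the section's framework and matches the paper.
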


\begin{proof}
For any $x_2>x_1>r/p+s/q$, let $G$ be elliptic consistently 
$\alpha_i$-normal with weighted incident matrix $B_i$ and $\{w_i(e)\}$ 
for $\lambda_{px_i,qx_i}(G)$, $i=1$, $2$.

For $x>r/p+s/q$, write $x=\mu x_1+(1-\mu)x_2$, where $\mu=(x_2-x)/(x_2-x_1)$.
We define a weighted incidence matrix $B$ and $\{w(e)\}$ for 
$\lambda_{px,qx}(G)$ as follows:
\[
B(v,e)=\mu B_1(v,e)+(1-\mu)B_2(v,e),~
w(e)=\xi w_1(e)+(1-\xi)w_2(e),
\]
where 
\[
\xi=\frac{\mu x_1\gamma(px_1,qx_1)}{x-(r/p+s/q)}.
\]
By some simple computation, we have
\begin{align*}
&~\bigg[w(e)^{\gamma(px,qx)}\prod_{u\in T(e)}(B(u,e))^{1/px}\prod_{v\in H(e)}(B(v,e))^{1/qx}\bigg]^{x}\\
= &~w(e)^{x-(r/p+s/q)} \prod_{u\in T(e)}(B(u,e))^{1/p}\prod_{v\in H(e)}(B(v,e))^{1/q}\\
\geq &~w_1(e)^{\xi[x-(r/p+s/q)]} \prod_{u\in T(e)}(B_1(u,e))^{\mu/p}\prod_{v\in H(e)}(B_1(v,e))^{\mu/q}\\
& \times w_2(e)^{(1-\xi)[x-(r/p+s/q)]} \prod_{u\in T(e)}(B_2(u,e))^{(1-\mu)/p}\prod_{v\in H(e)}(B_2(v,e))^{(1-\mu)/q}\\
= &~(\alpha_1)^{\mu x_1}(\alpha_2)^{(1-\mu)x_2}.
\end{align*}
Hence, $G$ is elliptic $\alpha$-subnormal for $\lambda_{px,qx}(G)$ with
$\alpha^x=(\alpha_1)^{\mu x_1}(\alpha_2)^{(1-\mu)x_2}$. It follows from
\autoref{lem:elliptic-subnormal} that
\[
x\log\,(\lambda_{px,qx}(G))\leq\mu x_1\log\,(\lambda_{px_1,qx_1}(G))
+(1-\mu)x_2\log\,(\lambda_{px_2,qx_2}(G)).
\]
The proof is completed.
\end{proof}

\subsection{Miscellaneous results}

The following theorem establish an relation of spectral radius between $G$ and
the underlying of $\mathcal{B}(G)$.

\begin{theorem}
Let $G$ be an $(r,s)$-directed hypergraph with $r/p+s/q=1$.
Suppose that $\overline{G}$ is the underlying hypergraph of $\mathcal{B}(G)$,
and $\rho(\overline{G})$ is the spectral radius of $\overline{G}$. 
\begin{enumerate}
\item[$(1)$] If $p\leq q$, then 
\[
\lambda_{p,q}(G)\leq\frac{1}{r^{r/p}s^{s/q}}(\rho(\overline{G}))^{(r+s)/p};    
\]
\item[$(2)$] If $p>q$, then
\[
\lambda_{p,q}(G)\leq\frac{1}{r^{r/p}s^{s/q}}(\rho(\overline{G}))^{(r+s)/q};    
\]
\item[$(3)$] If $p=q=r+s$, then 
\[
\lambda_{p,q}(G)=\frac{1}{\sqrt[r+s]{r^rs^s}}\rho(\overline{G}).    
\]
\end{enumerate}
\end{theorem}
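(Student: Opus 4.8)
The plan is to transfer an $\alpha$-normal labeling of the $(r+s)$-uniform hypergraph $\overline{G}$ into a parabolic (sub)normal labeling of $G$ and then invoke \autoref{lem:parabolic-subnormal} and \autoref{lem:iff}. First I would reduce to the case that $\overline{G}$ is connected: by \autoref{lem:connected iff underlying} the anadiplosis components $G_1,\dots,G_k$ of $G$ correspond to the connected components $\overline{G_1},\dots,\overline{G_k}$ of $\overline{G}$, so \autoref{lem:spectral components} (applicable since $r/p+s/q=1$) gives $\lambda_{p,q}(G)=\max_i\lambda_{p,q}(G_i)$, while the standard component identity $\rho(\overline{G})=\max_i\rho(\overline{G_i})$ for undirected uniform hypergraphs, together with the monotonicity of $t\mapsto t^{(r+s)/p}$ and $t\mapsto t^{(r+s)/q}$, reduces each stated inequality to the connected case (the case $E(G)=\varnothing$, where both sides vanish, being trivial).

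So assume $\overline{G}$ connected. It is a simple $(r+s)$-uniform hypergraph on $T(G)\,\dot\cup\,H(G)$ with edge set $\{\overline{e}: e\in E(G)\}$, $\overline{e}=T(e)\cup H(e)$, where a vertex $w$ lies in $\overline{e}$ as a tail-copy precisely when $w\in T(e)$ and as a head-copy precisely when $w\in H(e)$. Applying \autoref{thm:uniform hypergraph normal label} I get a weighted incidence matrix $\overline{B}$ of $\overline{G}$ with $\sum_{\overline{e}\ni w}\overline{B}(w,\overline{e})=1$ for every $w$, $\prod_{w\in\overline{e}}\overline{B}(w,\overline{e})=(\rho(\overline{G}))^{-(r+s)}$ for every $\overline{e}$, and $\prod_i\overline{B}(v_{i-1},\overline{e}_i)/\overline{B}(v_i,\overline{e}_i)=1$ along every cycle; in particular $0<\overline{B}(w,\overline{e})\le 1$. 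Define a weighted incidence matrix $B$ of the directed hypergraph $G$ by $B(w,e):=\overline{B}(w,\overline{e})$ whenever $w\in\overline{e}$. The tail/head dichotomy above turns the one row-sum condition on $\overline{B}$ into the two required by condition (1) of \autoref{def:parabolic-normal}: $\sum_{e:\,u\in T(e)}B(u,e)=\sum_{e:\,v\in H(e)}B(v,e)=1$.

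For part (1), $p\le q$ means $1/p\ge 1/q$, so $(\overline{B}(v,\overline{e}))^{1/q}\ge(\overline{B}(v,\overline{e}))^{1/p}$ for $v\in H(e)$ since $0<\overline{B}\le1$, whence
\[
\prod_{u\in T(e)}\big(B(u,e)\big)^{1/p}\prod_{v\in H(e)}\big(B(v,e)\big)^{1/q}\ \ge\ \Big(\prod_{w\in\overline{e}}\overline{B}(w,\overline{e})\Big)^{1/p}=(\rho(\overline{G}))^{-(r+s)/p},
\]
so, with $\alpha=(\rho(\overline{G}))^{-(r+s)/p}$, $G$ is parabolic $\alpha$-subnormal and \autoref{lem:parabolic-subnormal} yields $\lambda_{p,q}(G)\le r^{-r/p}s^{-s/q}\alpha^{-1}=r^{-r/p}s^{-s/q}(\rho(\overline{G}))^{(r+s)/p}$. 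Part (2) is the mirror image: for $p>q$ one moves the exponent comparison to the tail-copies, obtaining $\alpha=(\rho(\overline{G}))^{-(r+s)/q}$. For part (3), $p=q=r+s$, the inequality of (1) already reads $\lambda_{p,q}(G)\le\rho(\overline{G})/\sqrt[r+s]{r^rs^s}$, and now there is no slack, so $\prod_{u\in T(e)}(B(u,e))^{1/(r+s)}\prod_{v\in H(e)}(B(v,e))^{1/(r+s)}=(\rho(\overline{G}))^{-1}$; moreover the consistency of $\overline{B}$ transfers verbatim because each anadiplosis cycle $v_0e_1\cdots e_\ell v_\ell$ of $G$ maps to a cycle $v_0\overline{e}_1\cdots\overline{e}_\ell v_\ell$ of $\overline{G}$ on the same vertices with $B(v_{i-1},e_i)/B(v_i,e_i)=\overline{B}(v_{i-1},\overline{e}_i)/\overline{B}(v_i,\overline{e}_i)$. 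Hence $G$ is parabolic consistently $\alpha$-normal with $\alpha=(\rho(\overline{G}))^{-1}$, and \autoref{lem:iff} gives $\lambda_{p,q}(G)=r^{-r/(r+s)}s^{-s/(r+s)}\alpha^{-1}=\rho(\overline{G})/\sqrt[r+s]{r^rs^s}$.

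The hard part will not be any single estimate — the one genuinely new inequality, $0<t\le1\Rightarrow t^{1/q}\ge t^{1/p}$ when $p\le q$, is elementary — but the careful dictionary between $\overline{G}$ and $G$: one must check that the single row-sum and cycle-consistency conditions of \autoref{thm:uniform hypergraph normal label} split and transfer correctly into the tail/head-separated conditions of \autoref{def:parabolic-normal} and \autoref{lem:iff}, that the edge products match with exponent $1/p$ on the tail and $1/q$ on the head, and that the anadiplosis-component reduction is compatible with the component identity for $\rho(\overline{G})$.
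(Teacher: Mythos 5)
Your proposal is correct and follows essentially the same route as the paper: reduce to the anadiplosis-connected case via \autoref{lem:spectral components}, pull back the Lu--Man $\alpha$-normal labeling of $\overline{G}$ from \autoref{thm:uniform hypergraph normal label} to a weighted incidence matrix of $G$ via $B(v,e)=\overline{B}(v,\overline{e})$, use $0<\overline{B}\le 1$ to compare the exponents $1/p$ and $1/q$ and invoke \autoref{lem:parabolic-subnormal} for (1) and (2), and observe that for $p=q=r+s$ the labeling is parabolic consistently $(\rho(\overline{G}))^{-1}$-normal so \autoref{lem:iff} gives the equality in (3). Your write-up even makes explicit a couple of points the paper leaves implicit (why the edge-product inequality holds, and how cycle consistency transfers), so no gap to report.
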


\begin{proof}
By \autoref{lem:spectral components}, we may assume that $G$ is anadiplosis connected.
According to \autoref{thm:uniform hypergraph normal label}, let 
$\overline{B}=(\overline{B}(v,\overline{e}))$ be the weighted incidence matrix of 
$\overline{G}$ satisfying\\
(i) $\sum_{\overline{e}:\, v\in\overline{e}}\overline{B}(v,\overline{e})=1$, 
for any $v\in V(\overline{G})$;\\
(ii) $\prod_{v:\,v\in\overline{e}}\overline{B}(v,\overline{e})=\overline{\alpha}
=(\rho(\overline{G}))^{-(r+s)}$, for any $\overline{e}\in E(\overline{G})$;\\
(iii) $\prod_{i=1}^{\ell}\frac{B(v_{i-1},e_i)}{B(v_i,e_i)}=1$, for any cycle
$v_0e_1v_1e_2\cdots v_{\ell-1}e_{\ell}(v_{\ell}=v_0)$.\\
Now we define a weighted incidence matrix $B=(B(v,e))$ for $G$ as 
\begin{equation}\label{eq:label of underlying}
B(v,e)=\overline{B}(v,\overline{e}),\ \text{for any}\ e\in E(G).
\end{equation}
Clearly, for any $u\in T(G)$, $v\in H(G)$,
\[
\sum_{e:\,u\in T(e)}B(u,e)=
\sum_{\overline{e}:\,u\in \overline{e}}\overline{B}(u,\overline{e})=1,~
\sum_{e:\,v\in H(e)}B(v,e)=
\sum_{\overline{e}:\,v\in\overline{e}}\overline{B}(v,\overline{e})=1.        
\]
Also, for any $e\in E(G)$,
\[
\prod_{u\in T(e)}\big(B(u,e)\big)^{1/p}\cdot
\prod_{v\in H(e)}\big(B(v,e)\big)^{1/q}\geq
\begin{cases}
(\overline{\alpha})^{1/p}, & \text{if}\ p\leq q,\\
(\overline{\alpha})^{1/q}, & \text{if}\ p>q.
\end{cases}
\]
If $p\leq q$, $G$ is parabolic $(\overline{\alpha})^{1/p}$-subnormal.
By \autoref{lem:parabolic-subnormal} we have 
\[
\lambda_{p,q}(G)\leq\frac{1}{r^{r/p}s^{s/q}(\overline{\alpha})^{1/p}}
=\frac{1}{r^{r/p}s^{s/q}}(\rho(\overline{G}))^{(r+s)/p}.
\]
If $p>q$, $G$ is parabolic $(\overline{\alpha})^{1/q}$-subnormal.
By \autoref{lem:parabolic-subnormal} we conclude that 
\[
\lambda_{p,q}(G)\leq\frac{1}{r^{r/p}s^{s/q}(\overline{\alpha})^{1/q}}
=\frac{1}{r^{r/p}s^{s/q}}(\rho(\overline{G}))^{(r+s)/q}.
\]
Let $p=q=r+s$. By \autoref{def:parabolic-normal}, equation 
\eqref{eq:label of underlying} is a parabolic consistent 
$(\overline{\alpha})^{1/(r+s)}$-normal labeling of $G$. 
Therefore $\lambda_{p,q}(G)=(r^rs^s)^{-1/(r+s)}\rho(\overline{G})$.
\end{proof}

Let $G=(V,E)$ be an $(r,s)$-directed hyergraph. For each $u\in V$ (and $e\in E$), 
let $V_u$ (and $T_e$, $H_e$) be a new vertex set with $k$ (and $a$, $b$) elements 
such that all these new sets are pairwise disjoint. Then the {\em power} of $G$, 
denoted by $G(k;a,b)$, is defined as the $(kr+a,ks+b)$-directed hypergraph with 
the vertex set
\[
V(G(k;a,b))=\Bigg(\bigcup_{u\in V}V_u\Bigg)\bigcup
\Bigg(\bigcup_{e\in E}(T_e\cup H_e)\Bigg)    
\]
and arc set
\[
E(G(k;a,b))=\Bigg\{\widetilde{e}=\Bigg(\bigcup_{u\in T(e)}\big(V_u\bigcup T_e\big),~
\bigcup_{u\in H(e)}\big(V_u\bigcup H_e\big)\Bigg): e\in E(G)\Bigg\}.    
\]
    
\begin{theorem}
Let $G$ be an $(r,s)$-directed hyergraph, and $G(k;a,b)$ be the power of $G$ with 
$as=br$. Then
\[
\rho(G(k;a,b))=\frac{(\sqrt{rs}\,\rho(G))^{kr/(kr+a)}}{\sqrt{(kr+a)(ks+b)}}.    
\]
\end{theorem}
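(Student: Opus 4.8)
The plan is to apply the parabolic $\alpha$-normal labeling method, \autoref{lem:iff}. Recall $\rho(G)=\lambda_{2r,2s}(G)$ and $\rho(G(k;a,b))=\lambda_{p',q'}(G(k;a,b))$ where, writing $r'=kr+a$ and $s'=ks+b$, we set $p'=2r'$ and $q'=2s'$; then $r'/p'+s'/q'=\tfrac12+\tfrac12=1$, so $G(k;a,b)$ lies in the parabolic phase, just as $G$ does (with $p=2r$, $q=2s$). Being in the parabolic phase, \autoref{lem:spectral components} reduces the claim to the case where $G$ is anadiplosis connected (discarding the trivial edgeless case, assume also $G$ has an arc): the $k$ copies forming $V_u$ occur in precisely the arcs $\widetilde e$ with $u\in\overline e$, while the vertices of $T_e,H_e$ occur only in $\widetilde e$, so the anadiplosis components of $G(k;a,b)$ are exactly the powers of those of $G$, and $G(k;a,b)$ is anadiplosis connected — hence has no isolated vertex — whenever $G$ is. So \autoref{lem:iff} applies to both $G$ and $G(k;a,b)$.

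By the necessity part of \autoref{lem:iff}, $G$ has a parabolic consistently $\alpha$-normal weighted incidence matrix $B$ with $\alpha=\big(r^{r/(2r)}s^{s/(2s)}\rho(G)\big)^{-1}=(\sqrt{rs}\,\rho(G))^{-1}$. Lift it to $G(k;a,b)$ by setting $\widetilde B(w,\widetilde e)=B(u,e)$ when $w\in V_u$ (so $u\in\overline e$), $\widetilde B(w,\widetilde e)=1$ when $w\in T_e\cup H_e$, and $\widetilde B(w,\widetilde e)=0$ otherwise. Condition (1) of \autoref{def:parabolic-normal} holds: a copy $w\in V_u$ occurs in the tails (resp.\ heads) of exactly the arcs $\widetilde e$ with $u\in T(e)$ (resp.\ $u\in H(e)$), so $\sum_{\widetilde e:\,w\in T(\widetilde e)}\widetilde B(w,\widetilde e)=\sum_{e:\,u\in T(e)}B(u,e)=1$, and likewise on the head side, while each vertex of $T_e$ or $H_e$ has degree one with label $1$. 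For condition (2), if $\widetilde e$ comes from $e\in E(G)$, the $k$ copies in each $V_u$ with $u\in T(e)$ carry the common label $B(u,e)$ and the $a$ vertices of $T_e$ carry $1$, so
\[
\prod_{w\in T(\widetilde e)}\big(\widetilde B(w,\widetilde e)\big)^{1/p'}\cdot\prod_{w\in H(\widetilde e)}\big(\widetilde B(w,\widetilde e)\big)^{1/q'}
=\Bigg(\prod_{u\in T(e)}B(u,e)\Bigg)^{k/p'}\Bigg(\prod_{v\in H(e)}B(v,e)\Bigg)^{k/q'}.
\]
Put $\theta:=kr/(kr+a)$. One always has $k/p'=\theta/(2r)$, and the hypothesis $as=br$ (equivalently $s(kr+a)=r(ks+b)$) is exactly what is needed to also obtain $k/q'=\theta/(2s)$. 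Hence the right-hand side equals $\big[\,(\prod_{u\in T(e)}B(u,e))^{1/(2r)}(\prod_{v\in H(e)}B(v,e))^{1/(2s)}\,\big]^{\theta}=\alpha^{\theta}=:\widetilde\alpha$, which does not depend on $e$.

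For consistency: since $e\mapsto\widetilde e$ is a bijection $E(G)\to E(G(k;a,b))$, an anadiplosis cycle of $G(k;a,b)$ uses $\ell\ge 2$ distinct arcs, so each of its vertices lies in at least two arcs and therefore in some $V_u$; projecting each vertex to its base vertex and each $\widetilde e$ to $e$ yields an anadiplosis cycle $u_0e_1u_1\cdots e_\ell u_\ell$ ($u_\ell=u_0$) of $G$, along which $\prod_{i=1}^{\ell}\widetilde B(v_{i-1},\widetilde e_i)/\widetilde B(v_i,\widetilde e_i)=\prod_{i=1}^{\ell}B(u_{i-1},e_i)/B(u_i,e_i)=1$ because $B$ is parabolic consistent. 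Thus $G(k;a,b)$ is parabolic consistently $\widetilde\alpha$-normal, and the sufficiency part of \autoref{lem:iff} gives
\[
\rho(G(k;a,b))=\frac{1}{(r')^{1/2}(s')^{1/2}\widetilde\alpha}=\frac{\alpha^{-\theta}}{\sqrt{(kr+a)(ks+b)}}=\frac{(\sqrt{rs}\,\rho(G))^{kr/(kr+a)}}{\sqrt{(kr+a)(ks+b)}}.
\]

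The substantive part is the first-paragraph reduction: verifying that the power operation sends anadiplosis components to powers of anadiplosis components and preserves anadiplosis connectedness, so that \autoref{lem:iff} is legitimately applied to $G(k;a,b)$. Everything after that is the verification of the three bullet conditions plus the one-line computation that extracts $\theta$ from $as=br$.
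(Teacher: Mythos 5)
Your proposal is correct and takes essentially the same approach as the paper: lift the parabolic consistently $\alpha$-normal labeling of $G$ to $G(k;a,b)$ by assigning $B(u,e)$ to each copy in $V_u$ and $1$ to the vertices of $T_e\cup H_e$, use $as=br$ to see the new arc product is the constant $\alpha^{kr/(kr+a)}$, and invoke \autoref{lem:iff}. Your write-up is in fact more careful than the paper's on two points it leaves implicit — the reduction to the anadiplosis connected case and the verification that consistency is inherited by the lifted labeling.
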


\begin{proof}
Assume that $B=(B(u,e))$ is the parabolic consistent $\alpha$-normal labeling
of $G$. Now define a weighted incidence matrix $B'$ for $G(k;a,b)$ as follows:
\[
B'(v,\widetilde{e})=
\begin{cases}
    B(u,e), & \text{if}\ v\in V_u\ \text{for some}\ u\in e,\\
    1,      & \text{if}\ v\in T_e\cup H_e,\\
    0,      & \text{otherwise}.
\end{cases}    
\]
Clearly, for any vertex $v\in V(G(k;a,b))$,
\[
\sum_{\widetilde{e}:\,v\in T(\widetilde{e})}B'(v,\widetilde{e})
=\sum_{\widetilde{e}:\,v\in H(\widetilde{e})}B(v,\widetilde{e})=1.    
\]
Notice that $as=br$. Therefore, $G(k;a,b)$ is parabolic consistently 
$\alpha'$-normal with
\[
\alpha'=\prod_{v\in T(\widetilde{e})}(B'(v,\widetilde{e}))^{1/(kr+a)}\cdot
\prod_{v\in H(\widetilde{e})}(B'(v,\widetilde{e}))^{1/(ks+b)}=\alpha^{kr/(kr+a)}.
\]
It follows from \autoref{lem:iff} that
\begin{align*}
\rho(G(k;a,b)) & =\frac{1}{\sqrt{(kr+a)(ks+b)}\,\alpha'}\\
& =\frac{1}{\sqrt{(kr+a)(ks+b)}\,\alpha^{kr/(kr+a)}}\\
& =\frac{(\sqrt{rs}\,\rho(G))^{kr/(kr+a)}}{\sqrt{(kr+a)(ks+b)}}.    
\end{align*}
The proof is completed.
\end{proof}

\section{Concluding remarks}
\label{sec6}
In this paper, we establish an initial spectral theory of directed hypergraphs
by introducing the $(p,q)$-spectral radius $\lambda_{p,q}(G)$ for an $(r,s)$-directed 
hypergraph $G$. More precisely, we present some properties of $\lambda_{p,q}(G)$, and
develop a simple method for calculating $\lambda_{p,q}(G)$ via weighted incident matrix, 
as well as for comparing the $\lambda_{p,q}(G)$ with a particular value. The main 
results of this paper are focus on general $p$, $q\geq 1$. It is interesting to 
consider the case $p=2r$, $q=2s$, in which case the statements are concise and 
nontrivial. That would be our next topic to investigate.

For directed graphs, it is known that there are several different matrices associated 
to a directed graph $G$ to capture the adjacency of the directed graph. One candidate 
is the adjacency matrix $A(G)$, which is not symmetric. The $(i,j)$-entry of $A(G)$ is 
$1$ if there is an arc from the vertex $i$ to $j$, and $0$ otherwise (see more in 
\cite{Brualdi2010}). Another candidate is the skew-symmetric adjacency matrix, where 
the $(i,j)$-entry is $1$ if there is an arc from $i$ to $j$, and $-1$ if there is an 
arc from $j$ to $i$ (and $0$ otherwise) \cite{Cavers2012}. Recently, the Hermitian 
adjacency matrix $H(G)$ is introduced by Guo and Mohar \cite{GuoMohar2016}, and 
independently by Liu and Li \cite{LiuLi2015}. The $(i,j)$-entry $h_{ij}$ of $H(G)$ 
is given by 
\[
h_{ij}=\begin{cases}
1, & \text{if}\ （i,j)\in E(G)\ \text{and}\ (j,i)\in E(G),\\
\mathbf{i}, & \text{if}\ (i,j)\in E(G)\ \text{and}\ (j,i)\notin E(G),\\
-\mathbf{i}, & \text{if}\ (i,j)\notin E(G)\ \text{and}\ (j,i)\in E(G),\\
0, & \text{otherwise},
\end{cases}   
\]
where $\mathbf{i}$ is the imaginary unit. This paper provides a new direction to study 
the spectral properties of directed graphs, which have a great relationship with the 
anadiplosis connectedness of directed graphs. It would be an interesting topic to study 
the spectrum of a directed graph via the singular values of its adjacency matrix $A$ in 
\autoref{def:adjacency tensor} or equivalently the nonnegative eigenvalues of the following 
block matrix 
\[
\begin{pmatrix}
0 & A\\
A^{\mathrm{T}} & 0   
\end{pmatrix}.
\]

\end{document}